%
%
%


%
%
%


\documentclass[12pt, reqno, a4paper]{amsart}



\usepackage{ amssymb, amsmath, enumerate, amsfonts, amsthm, mathrsfs, url, bm, mathtools}

\setlength{\parindent}{1em}

\usepackage{xcolor}  	
\usepackage{hyperref}
\hypersetup{
colorlinks,
   linkcolor={cyan!80!black},
   citecolor={cyan!80!black},
 urlcolor={cyan!80!black}
}

\usepackage{color}

\usepackage[margin=1in]{geometry}

\RequirePackage{doi}

\usepackage{amscd}
\usepackage{amsfonts}
\usepackage{float}
\usepackage{color}
\usepackage[
backend=biber,
style=alphabetic,
]{biblatex}
\usepackage{bookmark}

\renewbibmacro{in:}{}
\DeclareFieldFormat{title}{#1}

\DeclareFieldFormat[article]{title}{\mkbibemph{#1}}       
\DeclareFieldFormat[incollection]{title}{\mkbibemph{#1}}  
\DeclareFieldFormat[book]{title}{\mkbibemph{#1}}          
\DeclareFieldFormat[incollection]{booktitle}{#1}          
\DeclareFieldFormat[article]{journaltitle}{#1}            

\AtEveryBibitem{%
  \ifentrytype{misc}{\DeclareFieldFormat{title}{\mkbibemph{#1}}}{}}

\DeclareFieldFormat{eprint:eprint}{arXiv:\href{https://arxiv.org/abs/#1}{#1}}

\DeclareFieldFormat[inproceedings]{title}{\mkbibemph{#1}}

\DeclareFieldFormat[inproceedings]{booktitle}{#1}

\addbibresource{ref.bib}

\usepackage{amssymb}




\newtheorem{theorem}{Theorem}[section]
\newtheorem{lemma}{Lemma}[section]

\newtheorem{corollary}{Corollary}[section]
\newtheorem{proposition}{Proposition}[section]

\theoremstyle{definition}
\newtheorem{definition}{Definition}[section]

\newtheorem{conjecture}{Conjecture}[section]

\theoremstyle{remark}
\newtheorem{remark}{Remark}[section]

\numberwithin{equation}{section}

\newcommand{\Mod}[1]{\ (\mathrm{mod}\ #1)}

\renewcommand{\Re}{\mathrm{Re}}

\newcommand{\Cov}{\mathrm{Cov}}

\newcommand{\meas}{\mathrm{meas}}
\newcommand{\sgn}{\mathrm{sgn}}
\newcommand{\Li}{\mathrm{Li}}

\renewcommand{\leq}{\leqslant}
\renewcommand{\geq}{\geqslant}

\begin{document}

\title[Joint distribution of primes in multiple short intervals]{Joint distribution of primes in multiple short intervals}


\author{}
\address{}
\curraddr{}
\email{}
\thanks{}

\author{Sun-Kai Leung}
\address{D\'epartement de math\'ematiques et de statistique\\
Universit\'e de Montr\'eal\\
CP 6128 succ. Centre-Ville\\
Montr\'eal, QC H3C 3J7\\
Canada}
\curraddr{}
\email{sun.kai.leung@umontreal.ca}
\thanks{}

\subjclass[2020]{11M26, 11N05, 60F05}

\date{}

\dedicatory{}

\keywords{}

\begin{abstract}
Assuming the Riemann hypothesis (RH) and the linear independence conjecture (LI), we show that the weighted count of primes in multiple short intervals follows a multivariate Gaussian distribution with weak negative correlations. As an application, we obtain short-interval analogues of many results in the literature on the Shanks–Rényi prime number race, including a sharp phase transition: biased races between primes in short intervals emerge once the number of intervals exceeds an explicit critical threshold. Our result is new even for a single moving interval, particularly under a quantitative formulation of the linear independence conjecture (QLI).
\end{abstract}

\maketitle

\setcounter{tocdepth}{1}
\tableofcontents

\section{Introduction}

The study of primes in short intervals can be traced back to  the late 18th century when the young prodigy Gauss examined tables of primes in search of patterns. In 1792, at the age of 15, he made a guess that despite its fluctuations,  the density of primes near 
$x$ is approximately $1/\log x$, as observed from counting primes in intervals of length $1000$ (chiliads). However, this prediction took more than a century to  rigorously justify.
Eventually, in 1896, Hadamard and de la Vall\'{e}e Poussin independently resolved this conjecture, which is now known as the \textit{Prime Number Theorem}. It states that as $x \to \infty,$ we have
\begin{align*}
\pi(x):=\# \{ p \leq x\} \sim \Li(x):=\int_2^{x} \frac{dt}{\log t},
\end{align*}
or equivalently, the weighted count of primes (and prime powers) satisfies
\begin{align*}
\psi(x):=\sum_{p^k \leq x} \log p \sim x.
\end{align*}
Consequently, one can show that the mean
of $\psi(n+H)-\psi(n)$ as $n \in [1,N]$ varies is
\begin{align*}
\frac{1}{N}\sum_{n \leq N} \left(\psi(n+H)-\psi(n)\right) 
\sim H,
\end{align*}
provided that $ H =o (N)$ as $N \to \infty.$


In his famous letter to Encke, 
Gauss counted primes in intervals of length $100$ from 1 million to 3 million, and also
compared $\pi(n+H)-\pi(n)$ and $\Li(n+H)-\Li(n)$ with $H=10^5$ but did not pursue it beyond comparisons with Legendre’s estimates. Given the wild fluctuations of prime counts in short intervals, what is their statistical behavior? 

In 1973, Goldston and Montgomery \cite{MR1018376} showed that the variance of $\psi(n+H)-\psi(n)$ as $n \in [1, N]$ varies is $\sim H\log \frac{N}{H}$ in the range of $H \in \left[ N^{\varepsilon}, N^{1-\varepsilon} \right] $ under the \textit{Riemann hypothesis} (RH) and the \textit{strong pair correlation conjecture} \cite{MR0337821}, which respectively concern the horizontal and vertical distribution of zeros of the Riemann zeta function.

Relatively recently, in 2004, Montgomery and Soundararajan \cite{MR2104891} showed that the distribution of $\psi(n+H)-\psi(n)$ as $n \in [1, N]$ varies is asymptotically normal with mean 
$\sim H$ and variance $\sim H\log \frac{N}{H},$ provided that
$\frac{H}{\log N} \to \infty$ and $\frac{\log H}{\log N} \to 0$ as $N \to \infty,$ by computing higher moments
under a uniform \textit{Hardy–Littlewood prime $k$-tuple conjecture} \cite{MR1555183}, which concerns the correlation of primes. They further conjectured that in the range of
$H \in [(\log N)^{1+\delta}, N^{1-\delta}],$ the normality persists.

In this paper, we revisit the \textit{Fourier side}.\footnote{That is, we apply Fourier analysis.} Instead of the pair correlation conjecture---an analytic assumption---we adapt the method of Rubinstein and Sarnak~\cite{MR1329368}, assuming the linear independence over~$\mathbb{Q}$ of the positive ordinates of the nontrivial zeros (LI), an algebraic condition. To our knowledge, this is the first instance where the emphasis is not on the distribution of primes or irreducible polynomials in residue classes, but rather on their distribution in short intervals (see \cite{Martin_2025} for an annotated bibliography).

As we shall see, given a large $X$ and $x \in [2,X],$ 
suppose $h=h(x)=\delta x,$ where $\delta>0$ is small but independent of $X,$ which is beyond the conjectural range of Montgomery and Soundararajan stated above. Then the distribution of $\psi(x+h)-\psi(x)$ as $x \in [2, X]$ varies (in logarithmic scale) remains Gaussian under RH and LI (see 
Theorem \ref{thm:clt} and 
Theorem \ref{cor:compare} with $r=1$).

Furthermore, one may ask: What is the joint distribution of the weighted count of primes in two neighboring intervals? Are they independent? If not, how are they correlated? 
We show that, assuming RH and LI, the pair
$\left( \psi(x)-\psi(x-h), \psi(x+h)-\psi(x) \right)$
as $x \in [2, X]$ varies (in logarithmic scale) is asymptotically bivariate Gaussian with a weak negative correlation. 
\begin{corollary} \label{thm:neighbour}
Assume RH and LI. Given a Borel subset $B \subseteq \mathbb{R}^2,$ define
\begin{align*}
 S_{X,\delta;B}:=\left\{x \in [2,X] \,: \,
\frac{\left( \psi(x)-\psi(x-\delta x)-\delta x, 
\psi(x+\delta x)-\psi(x)-\delta x\right)}
{\sqrt{\left(\delta \log \frac{1}{\delta} + 
(1-\gamma_{\mathbb{Q}}-\log 2\pi)\delta\right)x}}
\in B \right\}.
\end{align*}
Then for any $\delta>0$ sufficiently small, we have
\begin{align*}
\lim_{X \to \infty}\frac{1}{\log X}\int_{S_{X,\delta;B}} \frac{dx}{x}
=&
\frac{1}{2\pi\sqrt{\det\mathcal{C}}} \int_{B}
\exp \left( -\frac{1}{2}\langle \mathcal{C}^{-1}\boldsymbol{x},\boldsymbol{x}\rangle  \right) d\boldsymbol{x} +O\left( \frac{1}{\log^2 \frac{1}{\delta}} \right)
\end{align*}
with the covariance matrix 
\begin{align*}
\mathcal{C}=
\begin{pmatrix}
1 & -\frac{\log 2}{\log \frac{1}{\delta}}  \\
-\frac{\log 2}{\log \frac{1}{\delta}} & 1
\end{pmatrix}
.
\end{align*}
\end{corollary}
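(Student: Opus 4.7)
The plan is to derive Corollary \ref{thm:neighbour} from the multivariate CLT (Theorem \ref{thm:clt}) specialized to $r=2$, applied to the pair of abutting intervals $(x-\delta x, x]$ and $(x, x+\delta x]$. That theorem delivers the bivariate Gaussian logarithmic limit directly, so the only corollary-specific work is to match the entries of the $2\times 2$ covariance matrix $\mathcal{C}$.

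Abbreviate
\[
\sigma_1^2(\eta) := \eta\log\frac{1}{\eta} + (1-\gamma-\log 2\pi)\eta,
\]
so that the normalizer in the statement is $\sqrt{\sigma_1^2(\delta)\,x}$, and let $V(x)$ and $U(x)$ denote the two normalized counts, corresponding to $(x-\delta x, x]$ and $(x, x+\delta x]$ respectively. Applying the $r=1$ version of the main theorem to each interval separately, I get that $V$ and $U$ individually converge to a standard Gaussian under logarithmic measure with error $O(1/\log^2\frac{1}{\delta})$; in particular the diagonal entries of $\mathcal{C}$ are $1$ up to this error, and only the off-diagonal entry remains to be identified.

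For that entry the natural tool is the polarization identity
\[
\Cov(V,U) = \tfrac{1}{2}\bigl(\Var(V+U) - \Var(V) - \Var(U)\bigr).
\]
The unnormalized sum $\sqrt{\sigma_1^2(\delta)x}\,(V+U) = \psi(x+\delta x) - \psi(x-\delta x) - 2\delta x$ is the centered weighted prime count over a \emph{single} interval of proportional width $2\delta$, so the $r=1$ variance with parameter $2\delta$ gives $\Var(V+U) = \sigma_1^2(2\delta)/\sigma_1^2(\delta) + O(1/\log^2\frac{1}{\delta})$. A direct computation
\[
\sigma_1^2(2\delta) - 2\sigma_1^2(\delta) = 2\delta\log\frac{1}{2\delta} - 2\delta\log\frac{1}{\delta} = -2\delta\log 2
\]
then yields
\[
\Cov(V,U) = \frac{-\delta\log 2}{\sigma_1^2(\delta)} + O\!\left(\frac{1}{\log^2\frac{1}{\delta}}\right) = -\frac{\log 2}{\log\frac{1}{\delta}} + O\!\left(\frac{1}{\log^2\frac{1}{\delta}}\right),
\]
which is exactly $\mathcal{C}_{12}$ up to the advertised error.

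The main obstacle is not in this second-moment matching but in Theorem \ref{thm:clt} itself, where the bivariate Gaussian law must be established with a quantitative rate. The expected route is to insert the RH explicit formula to express each normalized count as $-2\Re\sum_{\gamma>0}((1\pm\delta)^\rho - 1)x^{i\gamma}/\rho$, invoke LI together with Kronecker--Weyl equidistribution to replace the family $\{x^{i\gamma}\}_{\gamma>0}$ by independent uniform phases on the unit circle, and then extract quantitative Gaussian convergence from the joint characteristic function, which factorizes into products of Bessel $J_0$ functions. The $O(1/\log^2\frac{1}{\delta})$ error transmitted to the corollary reflects both the inherent Berry--Esseen-type rate of that argument and the secondary $(1-\gamma-\log 2\pi)\delta$ correction in $\sigma_1^2(\delta)$, which is what distinguishes the true correlation $-\log 2/(\log\frac{1}{\delta}+O(1))$ from the simpler form tabulated in $\mathcal{C}$.
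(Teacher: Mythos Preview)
Your derivation is correct and follows the paper's intended route: invoke Theorem~\ref{thm:clt} with $r=2$ (here $t_1=-\tfrac12$, $t_2=\tfrac12$, so $|t_1-t_2|=1$) to obtain the bivariate Gaussian limit, then identify the entries of $\mathcal{C}$. The paper's own argument (see the Remark after Corollary~\ref{thm:neighbour}, and the proof of Corollary~\ref{cor:negcorr}) reads off $c_{12}$ directly from the off-diagonal case of Proposition~\ref{thm:cov}, using $\Delta(1)=\log 2$.

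Your polarization computation is a genuinely different (and pleasant) way to get the off-diagonal entry: because the two intervals abut, $X_{\delta,-1/2}+X_{\delta,1/2}=X_{2\delta,0}$ as random variables (the $w$-functions telescope), so $\Cov_{12}$ follows from the \emph{diagonal} case of Proposition~\ref{thm:cov} applied at parameters $\delta$ and $2\delta$. This bypasses the off-diagonal computation entirely and recovers $\Delta(1)=\log 2$ from the variance formula alone. The trade-off is that your trick is specific to adjacent intervals; the paper's approach via $\Delta(|t_j-t_k|)$ handles arbitrary gaps uniformly, which is what the other corollaries need. A minor point of phrasing: you do not really need the ``$r=1$ version of the main theorem'' to extract $\Var(V)=\Var(U)=1$ --- you only need the diagonal variance from Proposition~\ref{thm:cov} (together with the fact that the corollary's normalizer $\sigma_1^2(\delta)$ agrees with $V_j$ up to $O((\delta\log\tfrac1\delta)^2)$). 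The final paragraph sketching the proof of Theorem~\ref{thm:clt} is accurate but extraneous to the corollary.
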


In particular, primes in two neighboring intervals are aware of and avoid each other. More generally, we show that the weighted count of primes in multiple disjoint short intervals has an asymptotically multivariate Gaussian distribution (in logarithmic scale), with weak negative correlations, under RH and LI (see Theorem \ref{thm:clt} or Theorem \ref{cor:compare}).

\begin{remark}
To derive Corollary \ref{thm:neighbour} from Theorem \ref{thm:clt}, one can simply adapt the proof of Corollary \ref{cor:negcorr} (see Section \ref{sec:pfofcor} for more details).
\end{remark}

In Section \ref{newsection}, further assuming a quantitative 
formulation of the LI hypothesis, we demonstrate that for primes in a single moving interval, the normality persists as long as $\delta > (\log X)^{-\varepsilon}$ for any $\varepsilon>0$ (see Theorem \ref{thm:short}).

\section*{Notation and convention}

Throughout the paper, we adopt the following notations and conventions:

\begin{itemize}
\item we say $f(x) = O(g(x))$ or $f(x) \ll g(x) $ if there exists a constant $C > 0$ depending on the subscripted parameters such that $|f(x)| \leq Cg(x)$ for all $x$ in the domain of $f$;

\item we say $f(x)=o(g(x))$ as $x \to \infty$ or $x \to 0^+$ if $\frac{f(x)}{g(x)} \to 0$ in the corresponding limit, where the rate of convergence depends on the subscripted parameters;

\item we say $f \asymp g$ if there exist constants $C_1,C_2 > 0$ depending on the subscripted parameters
such that $C_1 g(x) \leq |f(x)| \leq C_2 g(x)$ for all $x$ in the domain of $f;$


\item
$[r]:=\{1,\ldots,r\};$

\item $\|\boldsymbol{x}\|=\|\boldsymbol{x}\|_2:=\sqrt{x_1^2+\cdots+x_r^2};$

\item $\|\boldsymbol{x}\|_{\infty}:=\max\{|x_1|,\ldots,|x_r|\};$

\item $\meas(B)$ denotes the Lebesgue measure of a Borel subset $B \subseteq \mathbb{R}^r;$

\item $\gamma_{\mathbb{Q}}$ denotes the Euler--Mascheroni constant;

\item $\Lambda(n):=
\begin{cases}
\log p & \mbox{{\normalfont if $n=p^k$ for some integer $k \geq 1,$} }\\
\hfil 0 & \mbox{{\normalfont otherwise;} }
\end{cases}
$

\item $\psi(x):=\sum_{n \leq x}\Lambda(n);$

\item $\rho=\frac{1}{2}+i\gamma$ denotes the nontrivial zero of the Riemann zeta function at height $\gamma$;

\item $N(T):=\#\{0 <\gamma \leq T \};$ 

\item $\mathcal{N}(\boldsymbol{0}, \mathcal{C})$ denotes a multivariate Gaussian with mean $\boldsymbol{0}$ and covariance matrix $\mathcal{C}.$




\end{itemize}

\section*{Symbol index}
The following index lists defined symbols and the pages where they first appear.
\begin{itemize}
    \item $E(x;\delta,t), T_S, w(s;\delta,t), \mathbb{P}_x^{\log}( \boldsymbol{Y}(x) \in B ), \mu_{\delta,\boldsymbol{t}}$ \dotfill p.\pageref{not:Exdeltat}
  \item $X_{\delta,t}, \Cov_{jk}(\delta,\boldsymbol{t}), V_j, \Delta(t)$ \dotfill p.\pageref{not:xdt}
  \item $\widetilde{\boldsymbol{E}}(x;\delta,\boldsymbol{t}), c_{jk}(\delta,\boldsymbol{t})$ \dotfill p.\pageref{not:renormalizeddeviation}
  \item $R_{\boldsymbol{\alpha}, \boldsymbol{\beta}}$ \dotfill p.\pageref{not:Rab}
  \item $\rho(\delta;\boldsymbol{t})$ \dotfill p.\pageref{not:rhodeltaboldsymbolt}
  \item $\rho_s(\delta,\boldsymbol{t})$ \dotfill p.\pageref{not:rhosdeltaboldsymbolt}
\end{itemize}

\section{Preliminaries}

Let us introduce several quantities that will appear throughout the paper.

Given an integer $r \geq 1,$ real numbers $x \geq 2, \delta>0$ and a vector $\boldsymbol{t} \in \mathbb{R}^r,$ we denote by
$\boldsymbol{E}(x;\delta,\boldsymbol{t})$ the $r$-tuple
$(E(x;\delta,t_1), \ldots, E(x;\delta,t_r) ),$ where
\begin{align*} 
E(x;\delta,t)
:=\frac{1}{\sqrt{x}}
\left( \psi\left( (1+t \delta)x+\frac{1}{2}\delta x \right) -
\psi\left( (1+t\delta)x-\frac{1}{2}\delta x \right) -\delta x
\right),
\end{align*}
\label{not:Exdeltat}i.e., the normalized deviation of the weighted prime count in the short interval of length $\delta x$ centered at $(1+t\delta )x.$ 
Here, we always assume $\delta>0$ is sufficiently small, so that
$2 \leq (1+t\delta)x-\delta x/2 \leq (1+t\delta)x+\delta x/2  \leq 2x .$ 
To simplify our discussion, we also require that $|t_j-t_k|\geq 1$ whenever $j\neq k,$ i.e., the intervals are disjoint.

\begin{remark}
In logarithmic scale, it is not unnatural to study the distribution of primes in ``multiplicative intervals" $[\exp((t-\frac{1}{2})\delta)x, \exp(t+\frac{1}{2})\delta)x]$ instead of ``additive intervals" $[(1+t\delta)x-\frac{1}{2}\delta x, (1+t\delta)x+\frac{1}{2}\delta x].$ In particular, calculations in the proof of Proposition \ref{thm:cov} can be simplified. Nevertheless, the difference is minimal, since $\exp(\delta) = 1 + \delta + O(\delta^2)$ for $\delta>0$ sufficiently small.
\end{remark}

Given a subset $S$ of $[r]:=\{1,\ldots,r\},$ we define\label{not:ts}
\begin{align*}
T_S:=1+\max_{j \in S} |t_j|.
\end{align*}


For $s \in \mathbb{C},$ we define
\begin{align} \label{eq:wsdt}
w(s)=w(s;\delta,t):=\frac{1}{s}\left( \left( 
1+\left( t+\frac{1}{2} \right)\delta
\right)^{s}-\left( 1+\left( t-\frac{1}{2} \right)\delta \right)^{s} \right)
\end{align}
\label{not:wsdeltat}and write $w_j(s):=w(s;\delta,t_j)$ for $j=1,\ldots,r.$


Inspired by the work of Rubinstein and Sarnak \cite{MR1329368} on primes in multiple arithmetic progressions to a large modulus, we study the joint distribution of primes in multiple short intervals with respect to the logarithmic density under RH and LI.

\begin{definition} \label{def}
Let $\boldsymbol{Y}(x)$ be a $\mathbb{R}^r$-valued function. We say that $\boldsymbol{Y}(x)$ has a logarithmic limiting distribution $\mu$ on $\mathbb{R}^r$ if 
\begin{align*}
\mathbb{E}_{x}^{\log}\left( f
\left( \boldsymbol{Y}(x) \right)\right):=&
\lim_{X \to \infty}\frac{1}{\log X}\int_{2}^X
f(\boldsymbol{Y}(x))\frac{dx}{x}\\
=&\lim_{U \to \infty}\frac{1}{U}\int_{1}^U
f(\boldsymbol{Y}( e^u))du\\
=&\int_{\mathbb{R}^r}f(\boldsymbol{y})d\mu(\boldsymbol{y})
\end{align*}
for all bounded continuous functions $f$ on $\mathbb{R}^r,$
i.e., the logarithmic time average equals the space average with respect to the measure $\mu$.

If such a measure $\mu$ exists, then for any Borel subset $B \subseteq \mathbb{R}^r,$ we define
\begin{align*}
\mu(B):=\mathbb{P}_x^{\log}\left( \boldsymbol{Y}(x) \in B \right)
=\mathbb{E}_{x}^{\log}\left( 1_B
\left( \boldsymbol{Y}(x) \right)\right),
\end{align*}
\label{not:mathbbPlog}where $1_B(\boldsymbol{x})$ is the indicator function of the Borel set $B.$
\end{definition}

In preparation for our main result Theorem \ref{thm:clt}, we establish the following propositions.

\begin{proposition}  \label{thm:rs}
Let $r \geq 1,\delta>0$ and $\boldsymbol{t} \in \mathbb{R}^r$ be fixed. Assume RH and LI. Then 
$\boldsymbol{E}(x;\delta,\boldsymbol{t})$ has a logarithmic limiting distribution $\mu_{\delta,\boldsymbol{t}}$\label{not:mudeltaboldsymbolt} on $\mathbb{R}^r.$ Furthermore, it is an absolutely continuous probability measure corresponding to the $\mathbb{R}^r$-valued random vector $\boldsymbol{X}_{\delta,\boldsymbol{t}}=(X_{\delta,t_1},\ldots,X_{\delta,t_r}),$ where\label{not:xdt}
\begin{align*}
X_{\delta,t}:=\Re \left(2 \sum_{\gamma>0} w(\rho)U_{\gamma}\right)
\end{align*}
with the sum running over the positive ordinates of the nontrivial zeros, and $\{U_{\gamma}\}_{\gamma>0}$ being a sequence of independent random variables uniformly distributed on the unit circle $\mathbb{T}.$
Moreover, the covariance matrix of $\boldsymbol{X}_{\delta,\boldsymbol{t}}$ is real symmetric with the $(j,k)$-entry being
\begin{align*}
\Cov_{jk}=\Cov_{jk}(\delta,\boldsymbol{t}):=
\sum_{\gamma}w_j(\rho)\overline{w_k(\rho)}.
\end{align*}
\end{proposition}

To lighten the notation, for $j=1,\ldots,r,$ we denote the variance $\Cov_{jj}$ by $V_j.$
\begin{proposition} \label{thm:cov} 
Assume RH and LI. Let $\delta>0$ be sufficiently small and $T=T_{\{j,k\}} \leq 
 \delta^{-\frac{1}{2}}\left(\log\frac{1}{\delta} \right)^{-1}.$ Then 
\begin{align*}
\Cov_{jk}=
\begin{cases}
\delta \log \frac{1}{\delta}
+(1-\gamma_{\mathbb{Q}}-\log 2\pi)\delta +O\bigl( \left(T \delta \log \frac{1}{ \delta}\right)^2 \bigr) & \mbox{{\normalfont if $j=k,$ } }\\
\hfil -\Delta(|t_j-t_k|)\delta
+O\bigl( \left(T \delta \log \frac{1}{ \delta}\right)^2 \bigr) & \mbox{{\normalfont if $j\neq k,$ } }
\end{cases}
\end{align*}
where 
\begin{align} \label{eq:biddelta}
\Delta(t):=\frac{1}{2}\biggl( (t+1)\log(t+1)
-2t\log t+(t-1)\log(t-1) \biggr),
\end{align}
i.e., the second order central difference of the function $f(t)=\frac{1}{2}t\log t.$
\end{proposition}

\begin{remark} In particular, if $\delta>0$ is sufficiently small, then $V_j=\Cov_{jj}\leq \delta \log \frac{1}{\delta}$ for $j=1,\ldots,r.$
\end{remark}

Note that \( \Delta(1) = \log 2 \) and \( \Delta(|t_j - t_k|) \) is always positive. Moreover, as \( |t_j - t_k| \to \infty \), we have \( \Delta(|t_j - t_k|) \to 0^+ \) monotonically. More precisely, we have (see Lemma \ref{lem:eq:coulomb})
\begin{align*} 
\Delta(|t_j - t_k|) =\frac{1+o(1)}{2|t_j - t_k|}.
\end{align*}
Thus, primes in disjoint short intervals obey a Coulomb-like law:\footnote{The Coulomb potential describes the electrostatic energy between two point charges as inversely proportional to their separation.} they repel each other, albeit weakly, with an intensity approximately inversely proportional to their separation.

\begin{remark}
Applying the elementary identity 
\begin{align*}
xz=\frac{1}{2}\left( (x+y+z)^2-(x+y)^2-(y+z)^2+y^2 \right)
\end{align*}
with (assuming $t_2>t_1$)
\begin{align*}
x=& \psi\left( (1+t_1 \delta)x+\frac{1}{2}\delta x \right) -
\psi\left( (1+t_1\delta)x-\frac{1}{2}\delta x \right) -\delta x,\\
y=& \psi\left( (1+t_2 \delta)x-\frac{1}{2}\delta x \right) -
\psi\left( (1+t_1\delta)x+\frac{1}{2}\delta x \right) -(t_2-t_1-1)\delta x, \\
z=&\psi\left( (1+t_2 \delta)x+\frac{1}{2}\delta x \right) -
\psi\left( (1+t_2\delta)x-\frac{1}{2}\delta x \right) -\delta x,
\end{align*}
all two-point correlations of weighted prime count in short intervals
can be determined via variances, providing a quick explanation for the repulsion. However, since higher mixed moments cannot be determined from pure moments, it is unclear a priori whether the weighted prime count in short intervals follows a multivariate normal distribution.
\end{remark}

\begin{remark}
The appearance of the secondary term \( (1 - \gamma_{\mathbb{Q}} - \log 2\pi)\delta \) is expected, as the variance computed by Montgomery and Soundararajan \cite{MR1954710} is
\[
\frac{1}{X} \int_{1}^X \left( \psi(x+H) - \psi(x) - H \right)^2 \, dx = H \log \frac{X}{H} - (\gamma_{\mathbb{Q}} + \log 2\pi+o(1))H 
\]
for \( X^{\varepsilon} \leq H \leq X^{1/2 - \varepsilon} \), under a uniform Hardy--Littlewood prime \( k \)-tuple conjecture. Chan \cite{MR2703577} also derived the same expression under a refinement of the strong pair correlation conjecture.
\end{remark}

\section{Main results}

In view of Proposition \ref{thm:rs}, we shall state our main theorems in terms of the renormalized deviation\label{not:renormalizeddeviation} 
\begin{align*}
\widetilde{\boldsymbol{E}}(x;\delta,\boldsymbol{t}):=
\left( \frac{E(x;\delta,t_1)}{\sqrt{V_1}},\ldots,
\frac{E(x;\delta,t_r)}{\sqrt{V_r}}
\right).
\end{align*}
Making a suitable change of variables, one can show that
$\widetilde{\boldsymbol{E}}(x;\delta,\boldsymbol{t})$ also has a logarithmic limiting distribution. In particular, for any Borel subset $B \subseteq \mathbb{R}^r,$ we have
\begin{align*}
\mathbb{P}_x^{\log}( \widetilde{\boldsymbol{E}}(x;\delta,\boldsymbol{t}) \in B )=\mu_{\delta,\boldsymbol{t}}(\widetilde{B}),
\end{align*}
where 
\begin{align*}
 \widetilde{B}:=\left\{ \boldsymbol{x}\in \mathbb{R}^r \,:\,\left( \frac{x_1}{\sqrt{V_1}},\ldots, \frac{x_r}{\sqrt{V_r}} \right) \in B \right\}.
\end{align*}

\begin{theorem} \label{thm:clt}
Assume RH and LI. Given a sufficiently small $\delta>0$ and an integer $r \geq 1$ for which $r/ \log \frac{1}{\delta}$ is sufficiently small,
let $T=T_{[r]} \leq \delta^{-\frac{1}{10}}$ and $B \subseteq \mathbb{R}^r$ be a Borel subset.
Then 
\begin{align*}
\mathbb{P}_x^{\log}(  \widetilde{\boldsymbol{E}}(x;\delta,\boldsymbol{t}) \in B )=&
\frac{1}{(2\pi)^{r/2}(\det\mathcal{C})^{1/2}} \int_{B}
\exp \left( -\frac{1}{2}\langle \mathcal{C}^{-1}\boldsymbol{x},\boldsymbol{x}\rangle  \right) d\boldsymbol{x}\\
&+O\left( r^4T^3 \left( \frac{1}{\sqrt{2\pi}}
+O \left(  \frac{\log 2r}{\log 1/\delta} \right) \right)^r  \left(\frac{1}{\delta}
\log \frac{1}{\delta} \right)^{-1} \meas(B) \right),
\end{align*}
where 
$\mathcal{C}=(c_{jk})_{r \times r}$ is the ``correlation matrix" with 
entries
\begin{align*}
c_{jk}=c_{jk}(\delta, \boldsymbol{t}):=
\frac{\Cov_{jk}}{\sqrt{V_j V_k}},
\end{align*}
i.e., the renormalized deviation of the weighted count of primes in multiple short intervals of length $\delta x$ is asymptotically normal with mean $\boldsymbol{0}$ and covariance matrix $\mathcal{C}.$
\end{theorem}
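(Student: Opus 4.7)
The plan is to follow the Rubinstein--Sarnak Fourier framework and reduce the problem to a quantitative comparison between the characteristic function of the limiting random vector supplied by Proposition \ref{thm:rs} and that of the target Gaussian.

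\textbf{Step 1 (Reduction to an $L^\infty$ density comparison).} By Proposition \ref{thm:rs}, together with a routine linear change of variables, the logarithmic limiting distribution of $\widetilde{\boldsymbol{E}}(x;\delta,\boldsymbol{t})$ is the law of $\widetilde{\boldsymbol{X}}_{\delta,\boldsymbol{t}}:=(X_{\delta,t_j}/\sqrt{V_j})_{j=1}^r$, which under LI is absolutely continuous with density $p$. Writing $g$ for the Gaussian density with mean $\boldsymbol{0}$ and covariance $\mathcal{C}$, one has
$$\Bigl|\mathbb{P}\bigl(\widetilde{\boldsymbol{X}}_{\delta,\boldsymbol{t}}\in B\bigr)-\int_B g\,d\boldsymbol{x}\Bigr|\leq \|p-g\|_\infty\cdot\meas(B),$$
so it suffices to bound $\|p-g\|_\infty$ by the $O(\cdot)$ quantity of the theorem, with the $\meas(B)$ factored out.

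\textbf{Step 2 (Characteristic function and Fourier inversion).} Using the identity $\mathbb{E}[e^{i\Re(cU)}]=J_0(|c|)$ for $U$ uniform on the unit circle, together with the independence of the $U_\gamma$ supplied by LI, the characteristic function of $\widetilde{\boldsymbol{X}}_{\delta,\boldsymbol{t}}$ is
$$\phi(\boldsymbol{\xi})=\prod_{\gamma>0}J_0\!\bigl(2|W(\boldsymbol{\xi},\gamma)|\bigr),\qquad W(\boldsymbol{\xi},\gamma):=\sum_{j=1}^r\frac{\xi_j}{\sqrt{V_j}}w_j(\rho),$$
while the target Gaussian has $\hat g(\boldsymbol{\xi})=\exp(-\tfrac12\langle\boldsymbol{\xi},\mathcal{C}\boldsymbol{\xi}\rangle)$. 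Fourier inversion yields
$$\|p-g\|_\infty\leq (2\pi)^{-r}\int_{\mathbb{R}^r}|\phi(\boldsymbol{\xi})-\hat g(\boldsymbol{\xi})|\,d\boldsymbol{\xi},$$
so the whole problem reduces to bounding this $L^1$-norm.

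\textbf{Step 3 (Local and tail estimates).} Split the integral at a radius $R$ to be optimised. On $\{|\boldsymbol{\xi}|\leq R\}$ apply the Taylor expansion $\log J_0(2z)=-z^2+O(z^4)$. Summing over $\gamma>0$, the manifest reality of $\sum_{\gamma>0}|W|^2$ together with the covariance identity of Proposition \ref{thm:rs}, renormalised by $\sqrt{V_jV_k}$, makes the quadratic term collapse exactly to $-\tfrac12\langle\boldsymbol{\xi},\mathcal{C}\boldsymbol{\xi}\rangle$; the quartic residual $\sum_{\gamma>0}|W|^4$ is controlled via $\|W(\boldsymbol{\xi},\cdot)\|_\infty^2\sum|W|^2$, the inner sum again reducing to $\tfrac12\langle\boldsymbol{\xi},\mathcal{C}\boldsymbol{\xi}\rangle$, and the supremum estimated from the pointwise bound $|w_j(\rho)|\ll(1+T\delta)^{1/2}\min(\delta,|\gamma|^{-1})$ combined with $V_j\asymp\delta\log(1/\delta)$ (Proposition \ref{thm:cov}). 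This produces $\phi=\hat g\,(1+O(\mathrm{small}))$ on the ball, and integrating against $\hat g$ contributes $\|g\|_\infty\cdot(\delta^{-1}\log\delta^{-1})^{-1}$ decorated by polynomial factors in $r$ and $T$. On $\{|\boldsymbol{\xi}|>R\}$ the Gaussian tail is immediate, and $|\phi|$ is controlled by a Hal\'asz--Montgomery-style argument: using Riemann--von Mangoldt to count the low-lying ordinates $\gamma$ on which $|W(\boldsymbol{\xi},\gamma)|$ is bounded below, the elementary estimate $|J_0(2|W|)|\leq 1-c\min(|W|^2,1)$ gives super-polynomial decay of $\phi$.

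\textbf{Main obstacle.} The delicate point is simultaneous uniformity in $r$, $T$, and $\delta$: one must follow how the renormalisation factors $1/\sqrt{V_j}$, the $t_j$-dependence of $w_j$, and the $r$-dimensional Fourier integral interact to produce exactly $r^4T^3\bigl((2\pi)^{-1/2}+O(\log 2r/\log(1/\delta))\bigr)^r(\delta^{-1}\log\delta^{-1})^{-1}$. The factor $\bigl((2\pi)^{-1/2}+O(\log 2r/\log(1/\delta))\bigr)^r$ reflects $\|g\|_\infty=(2\pi)^{-r/2}(\det\mathcal{C})^{-1/2}$, and the hypotheses $r=o(\log(1/\delta))$ and $T\leq\delta^{-1/10}$ are precisely what make Proposition \ref{thm:cov} force the off-diagonal entries of $\mathcal{C}$ down to $O(1/\log(1/\delta))$, keeping $(\det\mathcal{C})^{-1/2}$ from growing faster than $(1+O(\log 2r/\log(1/\delta)))^r$ while simultaneously making the quartic error $(T\delta\log(1/\delta))^2$ genuinely negligible.
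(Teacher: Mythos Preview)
Your proposal is correct and follows essentially the same Fourier-analytic route as the paper: Fourier inversion on the limiting measure from Proposition \ref{thm:rs}, a local Taylor expansion $\log J_0(2z)=-z^2+O(z^4)$ matching the quadratic term to $-\tfrac12\langle\mathcal{C}\boldsymbol{\xi},\boldsymbol{\xi}\rangle$, and a large-spectrum (Hal\'asz-type) bound for the tail. The only notable variations are cosmetic: you bound the quartic residual via $\sum|W|^4\le\|W\|_\infty^2\sum|W|^2$ whereas the paper splits the $\gamma$-sum at $1/(T\delta)$ (your way is in fact slightly sharper in the $r,T$ exponents), and the factor $\bigl((2\pi)^{-1/2}+O(\log 2r/\log(1/\delta))\bigr)^r$ arises not from $\|g\|_\infty$ per se but from integrating $\|\boldsymbol{\xi}\|^4\exp(-\tfrac12\langle\mathcal{C}\boldsymbol{\xi},\boldsymbol{\xi}\rangle)$ after invoking $\langle\mathcal{C}\boldsymbol{\xi},\boldsymbol{\xi}\rangle=(1+O(\log 2r/\log(1/\delta)))\|\boldsymbol{\xi}\|^2$ (the paper's Lemma \ref{lem:cxx}) and passing to spherical coordinates.
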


Although the error term here depends on the Borel set, the estimate can be made uniform over all such sets, provided that $r$ is small, i.e., there are not too many intervals.

\begin{theorem}\label{cor:compare}
Assume RH and LI. Given a sufficiently small $\delta>0$ and an
integer $1 \leq r \leq \frac{\log 1/\delta}{\log \log 1/\delta},$ let $T=T_{[r]} \leq \delta^{-\frac{1}{10}}$ and $B \subseteq \mathbb{R}^r$ be a Borel subset.
Then 
\begin{gather*}
\mathbb{P}_x^{\log}( \widetilde{\boldsymbol{E}}(x;\delta,\boldsymbol{t}) \in B )
=\frac{1}{(2\pi)^{r/2}(\det\mathcal{C})^{1/2}} \int_{B}
\exp \left( -\frac{1}{2}\langle \mathcal{C}^{-1}\boldsymbol{x},\boldsymbol{x}\rangle  \right) d\boldsymbol{x}\\
+O\left( \frac{r^4T^3}{(2\pi)^{r/2}}  \left(\frac{1}{\delta}
\log \frac{1}{\delta} \right)^{-1} 
\min \left\{\meas(B) , \left( 20\log \frac{1}{\delta} \right)^{r/2} \right\} \right).
\end{gather*}
In particular, the total variation distance satisfies 
\begin{align*}
\sup_{B \subseteq \mathbb{R}^r \,: \, B \, \text{\normalfont Borel}} 
\left| \mathbb{P}_x^{\log}( \widetilde{\boldsymbol{E}}(x;\delta,\boldsymbol{t}) \in B ) - \mathbb{P}(\mathcal{N}(\boldsymbol{0}, \mathcal{C}) \in B)  \right| \ll 
r^4 T^3 \delta \left( \frac{10}{\pi} \log \frac{1}{\delta}  \right)^{r/2-1},
\end{align*}
where $\mathcal{N}(\boldsymbol{0}, \mathcal{C})$ is an $r$-dimensional Gaussian random variable with mean $\boldsymbol{0}$ and covariance matrix $\mathcal{C}.$
\end{theorem}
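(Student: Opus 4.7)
The plan is to deduce Theorem \ref{cor:compare} from Theorem \ref{thm:clt} by (i) absorbing the ugly $r$-dependent factor $\bigl((2\pi)^{-1/2}+O(\log 2r/\log(1/\delta))\bigr)^r$ into the clean shape $(2\pi)^{-r/2}$ using the stronger hypothesis $r\leq \log(1/\delta)/\log\log(1/\delta)$, and (ii) replacing $\meas(B)$ by the uniform bound $(20\log(1/\delta))^{r/2}$ whenever $B$ is too large. Step (i) is immediate: write
$$\left(\frac{1}{\sqrt{2\pi}}+O\left(\frac{\log 2r}{\log(1/\delta)}\right)\right)^r=(2\pi)^{-r/2}\exp\left(O\left(\frac{r\log 2r}{\log(1/\delta)}\right)\right),$$
and note that the hypothesis forces $r\log 2r=O(\log(1/\delta))$ (since $\log 2r\asymp \log\log(1/\delta)$ throughout the admissible range of $r$), so the exponential is $O(1)$. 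This already yields the $\meas(B)$ side of the claimed $\min$.

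For step (ii), set $R:=\sqrt{5\log(1/\delta)}$ and let $Q_R:=[-R,R]^r$, so that $\meas(Q_R)=(20\log(1/\delta))^{r/2}$. I would decompose $B=(B\cap Q_R)\cup(B\setminus Q_R)$ and apply Theorem \ref{thm:clt} (with step (i)) to the first piece, bounding the measure by $\meas(Q_R)$. What remains is to check that both $\mathbb{P}_x^{\log}(\widetilde{\boldsymbol{E}}(x;\delta,\boldsymbol{t})\notin Q_R)$ and $\mathbb{P}(\mathcal{N}(\boldsymbol{0},\mathcal{C})\notin Q_R)$ are negligible compared with the target error. The Gaussian tail is classical: $\mathbb{P}(|\mathcal{N}_j|>R)\ll e^{-R^2/2}=\delta^{5/2}$, and a union bound over $j$ handles the cube.

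For the distributional tail, I would appeal to Proposition \ref{thm:rs} to realise $\widetilde{E}_j$ as the law of $X_{\delta,t_j}/\sqrt{V_j}$, then compute $\mathbb{E}[X_{\delta,t_j}^{2k}]$ by expanding and averaging over the independent uniform phases $\{U_\gamma\}$. Under LI, only terms with matched conjugate signs at each $\gamma$ survive; the fully paired contributions give $\leq (2k-1)!!\,V_j^k$, and higher-cumulant partial matchings should remain controllable for $k$ up to order $\log(1/\delta)$. Choosing $k\asymp R^2/2$ and invoking Markov's inequality then yields $\mathbb{P}_x^{\log}(|\widetilde{E}_j|>R)\ll \delta^{5/2}$, matching the Gaussian tail. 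Combining the two pieces produces the stated $\min$, and the total variation bound follows by taking the supremum over Borel $B$ using the $(20\log(1/\delta))^{r/2}$ side. The main obstacle is precisely this moment estimate: tracking the combinatorics of partial matchings and showing that they stay subdominant out to $k\asymp \log(1/\delta)$ is the technical heart of the argument, though it should fit within the Rubinstein--Sarnak framework.
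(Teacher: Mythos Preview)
Your overall strategy is exactly that of the paper: truncate $B$ to the cube $Q_R$ with $R=\sqrt{5\log(1/\delta)}$, apply Theorem \ref{thm:clt} to $B\cap Q_R$ and absorb the $\bigl((2\pi)^{-1/2}+O(\log 2r/\log(1/\delta))\bigr)^r$ factor using $r\log 2r\ll\log(1/\delta)$, then show that both the limiting distribution $\mu_{\delta,\boldsymbol t}$ and the Gaussian put negligible mass outside $Q_R$.

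The one genuine difference is your tail bound for $\mu_{\delta,\boldsymbol t}$. You propose estimating $\mathbb E[X_{\delta,t_j}^{2k}]$ for $k\asymp\log(1/\delta)$ via combinatorial pairings and then applying Markov. This can be made to work, but the bookkeeping of partial matchings at each $\gamma$ out to such large $k$ is exactly the technical burden you flag. The paper sidesteps it entirely via the moment generating function (Lemma \ref{lem:decaymu}): since the $U_\gamma$ are independent,
\[
\mathbb E\bigl[e^{sX_{\delta,t}}\bigr]=\prod_{\gamma>0}I_0\bigl(2s|w(\rho)|\bigr)\le \exp\Bigl(s^2\sum_{\gamma>0}|w(\rho)|^2\Bigr)=\exp\Bigl(\tfrac{s^2}{2}V_j\Bigr),
\]
using the elementary bound $I_0(x)\le e^{x^2/4}$. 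A single Chernoff step with $s=R/V_j$ then gives $\mathbb P(X_{\delta,t_j}>R\sqrt{V_j})\le e^{-R^2/4}$ directly, with no combinatorics at all. This is both shorter and yields a clean constant; your moment route is morally equivalent (Taylor-expand the MGF) but pays for it in bookkeeping.
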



As mentioned after Proposition~\ref{thm:cov}, the correlation between primes in two disjoint short intervals is asymptotically inversely proportional to their separation, and is negative. Therefore, informally speaking, the weighted prime counts in multiple short intervals resemble jointly normal point charges.

\begin{remark}
On the Fourier side,  as $t \in [T, 2T]$ varies for some large real number $T \geq 1$, the logarithm of the modulus of the Riemann zeta function on the critical line $\log |\zeta(\frac{1}{2}+it)|$ resembles a \textit{log-correlated Gaussian field} (see \cite[Theorem 1.1]{MR2678896}). In particular, if the separation between shifts $h_1,h_2$ satisfies  $\frac{1}{\log T} \ll |h_1-h_2|  \ll 1$ as $T \to \infty$, then the correlation between $\log |\zeta(\frac{1}{2}+it+ih_1)|$ and $\log |\zeta(\frac{1}{2}+it+ih_2)|$ is asymptotically proportional to $\log \left(|h_1-h_2|^{-1} \right),$ and is positive. It would then be natural to explore ``races" among $\log |\zeta(\frac{1}{2}+it+ih_j)|$ for many shifts $h_j$ (see the following discussions). 
\end{remark}

Many interesting consequences follow from Theorem~\ref{thm:clt} and Theorem~\ref{cor:compare}. First of all, if the number of intervals does not grow with \( \delta \) as \( \delta \to 0^+ \), then the negative correlations are explicitly detectable.



\begin{corollary}\label{cor:negcorr}
Assume RH and LI. 
Suppose both $r$ and $T=T_{[r]}$ are uniformly bounded with respect to a sufficiently small $\delta>0$. Let 
$R_{\boldsymbol{\alpha},\boldsymbol{\beta}}:=\prod_{i=1}^r(\alpha_i,\beta_i]$\label{not:Rab} be a box, where $\alpha_1,\ldots,\alpha_r,\beta_1,\ldots,\beta_r$ are real numbers or infinite.
Then 
\begin{gather*}
\mathbb{P}_x^{\log}( \widetilde{\boldsymbol{E}}(x;\delta,\boldsymbol{t}) \in R_{\boldsymbol{\alpha},\boldsymbol{\beta}} )
=\Phi(R_{\boldsymbol{\alpha},\boldsymbol{\beta}})
-\frac{1}{2\pi\log \frac{1}{\delta}} \sum_{1 \leq j<k \leq r} 
\Delta (|t_j-t_k|) \\
\cdot ( e^{ -\frac{1}{2}\alpha_j^2}-e^{ -\frac{1}{2}\beta_j^2})( e^{-\frac{1}{2}\alpha_k^2}-e^{-\frac{1}{2}\beta_k^2})
\Phi\left(\prod_{\substack{i=1\\i \neq j,k}}^r  (\alpha_i,\beta_i]\right)
+O_{r,T}\left( \frac{1}{\log^2 \frac{1}{\delta}} \right),
\end{gather*}
where
\begin{align*}
\Phi(R):=\frac{1}{(2\pi)^{r/2}}\int_{R} e^{-\frac{1}{2}\|\boldsymbol{x}\|^2} d\boldsymbol{x}.
\end{align*}
for measurable sets $R \subseteq \mathbb{R}^r.$
\end{corollary}
In particular, we have 
\begin{gather*}
\mathbb{P}_x^{\log}\left( E(x;\delta,t_1) , E(x;\delta,t_2), \ldots, E(x;\delta,t_r)>0 \right)
\\=
\frac{1}{2^r}-\frac{1}{2^{r-2}}\cdot \frac{1}{2\pi \log \frac{1}{\delta}}\sum_{1 \leq j <k \leq r} \Delta(|t_j-t_k|)
+O_{r,T} \left( \frac{1}{\log^2 \frac{1}{\delta}} \right),
\end{gather*}
and similarly
\begin{gather*}
\mathbb{P}_x^{\log}\left( E(x;\delta,t_1) , E(x;\delta,t_2), \ldots, E(x;\delta,t_r)<0 \right) \\=
\frac{1}{2^r}-\frac{1}{2^{r-2}}\cdot \frac{1}{2\pi \log \frac{1}{\delta}}\sum_{1 \leq j <k \leq r} \Delta(|t_j-t_k|) 
+O_{r,T} \left( \frac{1}{\log^2 \frac{1}{\delta}} \right),
\end{gather*}
i.e., it is less likely that the normalized deviations of weighted prime counts in short intervals are either all positive or all negative, as one might expect.

We also establish a large deviation estimate for the $L^2$-norm of prime count deviations. 

\begin{corollary} \label{cor:largedev}
Assume RH and LI. 
Suppose both $r$ and $T=T_{[r]}$ are uniformly bounded with respect to a sufficiently small $\delta>0$. Then 
\begin{align*}
\mathbb{P}_x^{\log}
( \|\widetilde{\boldsymbol{E}}(x;\delta,\boldsymbol{t})\| >V )
=\frac{1}{(2\pi)^{r/2}} \int_{\|\boldsymbol{x}\|>V}
\exp \left( -\frac{1}{2}\|\boldsymbol{x}\|^2 \right) d\boldsymbol{x}+O_{r,T}\left( \frac{1}{\log^2 \frac{1}{\delta}} \right).
\end{align*}
\end{corollary}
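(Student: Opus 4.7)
The plan is to apply Theorem \ref{cor:compare} to the spherically symmetric Borel set $B = \{\boldsymbol{x} \in \mathbb{R}^r : \|\boldsymbol{x}\| > V\}$, and then show that the Gaussian measure on $B$ with covariance $\mathcal{C}$ agrees with the standard Gaussian measure on $B$ up to an error of $O_{r,T}\left( \frac{1}{\log^2 \frac{1}{\delta}} \right).$ Since $r$ and $T$ are bounded, the error coming directly from Theorem \ref{cor:compare} (using the second branch of the minimum in the error term) is at most $O\left(\delta \left(\log \frac{1}{\delta}\right)^{r/2-1}\right),$ which is far smaller than $\frac{1}{\log^2 \frac{1}{\delta}}$ and hence negligible. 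Thus the problem reduces to comparing two Gaussian integrals.

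To perform the comparison, I would use Proposition \ref{thm:cov} together with the definition of $c_{jk}$ to write $\mathcal{C} = I + A,$ where $A$ has zero diagonal and off-diagonal entries $c_{jk} = O\left(\frac{1}{\log \frac{1}{\delta}}\right).$ A direct Taylor expansion, using that $r$ is bounded, gives
\begin{align*}
\det \mathcal{C} = 1 - \tfrac{1}{2} \mathrm{tr}(A^2) + O\!\left(\tfrac{1}{\log^3 \frac{1}{\delta}}\right) = 1 + O\!\left(\tfrac{1}{\log^2 \frac{1}{\delta}}\right), \qquad \mathcal{C}^{-1} = I - A + O\!\left(\tfrac{1}{\log^2 \frac{1}{\delta}}\right) \text{ entrywise,}
\end{align*}
and consequently
\begin{align*}
\left\langle \mathcal{C}^{-1}\boldsymbol{x}, \boldsymbol{x} \right\rangle = \|\boldsymbol{x}\|^2 - 2 \sum_{j<k} c_{jk} x_j x_k + O\!\left( \tfrac{\|\boldsymbol{x}\|^2}{\log^2 \frac{1}{\delta}} \right).
\end{align*}
I would split the integration into the bulk region $\|\boldsymbol{x}\| \leq L$ with $L = \left(\log \frac{1}{\delta}\right)^{1/4}$ (where the factor $\exp\bigl(\sum_{j<k} c_{jk} x_j x_k + O(\|\boldsymbol{x}\|^2/\log^2(1/\delta))\bigr)$ can be Taylor-expanded to second order since its exponent is $o(1)$) and the tail region $\|\boldsymbol{x}\| > L$ (handled by the Gaussian decay $e^{-\|\boldsymbol{x}\|^2/2}$).

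The decisive observation is that the first-order correction $\sum_{j<k} c_{jk} \int_B x_j x_k \, e^{-\|\boldsymbol{x}\|^2/2} \, d\boldsymbol{x}$ vanishes identically, because $B$ is invariant under every coordinate reflection $x_j \mapsto -x_j$. Hence only the quadratic remainder and the second-order Taylor terms survive, and each contributes $O\!\left(\frac{1}{\log^2 \frac{1}{\delta}}\right)$ after integration against the Gaussian. The main obstacle I foresee is not conceptual but technical: one must bookkeep the remainders in the tail $\|\boldsymbol{x}\| > L$ so as not to lose a logarithmic factor. This is handled by observing that $|c_{jk}| = o(1)$ forces $\exp\bigl(\sum_{j<k} c_{jk} x_j x_k\bigr) e^{-\|\boldsymbol{x}\|^2/2} \ll e^{-c\|\boldsymbol{x}\|^2}$ for some absolute $c > 0$, so the tail contributes a super-polynomially small quantity, well within the claimed error bound.
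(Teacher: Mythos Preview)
Your proposal is correct and follows essentially the same route as the paper. The paper's proof simply says ``arguing analogously'' to the proof of Corollary~\ref{cor:negcorr} one obtains
\[
\mathbb{P}_x^{\log}\bigl( \|\widetilde{\boldsymbol{E}}\| >V \bigr)
=\frac{1}{(2\pi)^{r/2}} \int_{\|\boldsymbol{x}\|>V} e^{-\frac{1}{2}\|\boldsymbol{x}\|^2} d\boldsymbol{x}
-\frac{1}{\log \frac{1}{\delta}} \sum_{j<k} \frac{\Delta (|t_j-t_k|)}{(2\pi)^{r/2}} \int_{\|\boldsymbol{x}\|>V} x_jx_k e^{-\frac{1}{2}\|\boldsymbol{x}\|^2} d\boldsymbol{x}
+O_{r,T}\Bigl( \tfrac{1}{\log^2 \frac{1}{\delta}} \Bigr),
\]
and then observes that the cross integrals vanish by symmetry---exactly your ``decisive observation.'' The only cosmetic difference is that the paper invokes Theorem~\ref{thm:clt} together with the truncation and expansion already packaged in Lemma~\ref{lem:finale} (following the template of Corollary~\ref{cor:negcorr}), whereas you start from Theorem~\ref{cor:compare} and redo the matrix expansion $\mathcal{C}^{-1}=I-A+O(\log^{-2}(1/\delta))$ by hand; the substance is identical.
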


Motivated by classical results on primes in arithmetic progressions, Theorem~\ref{thm:clt} and Theorem~\ref{cor:compare} give rise to further applications.

In 1853, Chebyshev noted that on a fine scale there seem to be
more primes congruent to $3$ than to $1$ modulo $4$, now known as the \textit{Chebyshev's bias}. This observation led to the birth of \textit{comparative prime number theory}, which investigates the discrepancies in the distribution
of prime numbers (see \cite{MR146156} for an introduction).

A central problem is the \textit{Shanks--R\'{e}nyi prime number race} (see \cite{MR1985941} and \cite{MR2202918} for an introduction). 
Let $q \geq 3$ and $2 \leq r \leq \varphi(q)$ be positive integers, and denote by $\mathcal{A}_r(q)$ the set of
ordered $r$-tuples $(a_1,\ldots,a_r)$ of distinct residue classes that are coprime to $q$. 
Assuming the \textit{generalized Riemann hypothesis} (GRH) and the \textit{generalized linear independence conjecture} (GLI),\footnote{See \cite[p. 176]{MR1329368}, where GLI is referred to as the grand simplicity conjecture.} Rubinstein and Sarnak \cite{MR1329368} showed that for any  $(a_1,\ldots,a_r) \in \mathcal{A}_r(q),$
the inequality
\begin{gather*}
\pi(x;q,a_1)>\pi(x;q,a_2)>\cdots>\pi(x;q,a_r)
\end{gather*}
holds with a positive logarithmic density,\footnote{The logarithmic density of a set $S \subseteq (0,\infty)$, if it exists, is defined as
$\delta(S):=\lim_{X \to \infty} \frac{1}{\log X} \int_{S \cap [1,X]} \frac{dx}{x}.$ } denoted by $\delta(q;a_1,\ldots,a_r),$ where
\begin{align*}
\pi(x;q,a):=\#\{ p \leq x \,: \, p \equiv a \pmod{q} \}.
\end{align*}

For small moduli $q,$ as Chebyshev noticed, there are orderings of the $\pi(x;q,a_i)$'s not occurring with approximately
the same logarithmic density, which is $1/r!.$ 
Nevertheless, as $q \to \infty,$ Rubinstein and Sarnak \cite{MR1329368} proved, conditional on GRH and GLI, that any biases dissolve, provided the number of contestants $r$ does not grow with $q$. 


In this paper, we shall investigate the prime number race among many short intervals. To simplify notation, let us denote\label{not:rhodeltaboldsymbolt}
\begin{align*}
\rho(\delta;\boldsymbol{t}):=
\mathbb{P}_x^{\log}( \widetilde{E}(x;\delta,t_1)>
\widetilde{E}(x;\delta,t_2)>\cdots
>\widetilde{E}(x;\delta,t_r)
).
\end{align*}

When the number of intervals \( r \) does not grow as \( \delta \to 0^+ \), the following is a short-interval analog of \cite[Theorem 2.1]{MR3063909}.

\begin{corollary}\label{thm:extremebias}
Assume RH and LI. 
Suppose both $r$ and $T=T_{[r]}$ are uniformly bounded with respect to $\delta$. Then 
\begin{gather*}
\rho(\delta;\boldsymbol{t})=\frac{1}{r!}
-\frac{1}{\log \frac{1}{\delta}} \sum_{1 \leq j<k \leq r} 
\frac{\Delta (|t_j-t_k|)}{(2\pi)^{r/2}}
\int_{x_1>\cdots>x_r} x_jx_k e^{-\frac{1}{2}\|\boldsymbol{x}\|^2} d\boldsymbol{x}+O_{r,T}\left(\frac{1}{\log^2 \frac{1}{\delta}}\right).
\end{gather*}

\end{corollary}
Using Proposition \ref{thm:cov}, for each $j$, one can replace $\widetilde{E}(x;\delta,t_j)$ above by 
\begin{align*}
\psi(x;\delta,t_j):=\psi\left( (1+t_j \delta)x+\frac{1}{2}\delta x \right) -
\psi\left( (1+t_j\delta)x-\frac{1}{2}\delta x \right).
\end{align*}
For instance, in the case of three consecutive intervals, we have
\begin{align*}
\mathbb{P}_x^{\log}\left( \psi(x;\delta,\mp 1)>
\psi(x;\delta,0)
>\psi(x;\delta,\pm 1)
\right)&=
\frac{1}{6}-\frac{(\log4-\log3)\sqrt{3}}{4\pi\log \frac{1}{\delta}}+O\left(\frac{1}{\log^2\frac{1}{\delta}}\right)\\
&\approx 
\frac{1}{6}-\frac{0.039652}{\log \frac{1}{\delta}}.
\end{align*}
Meanwhile, the probability for each of the remaining four orderings is
\begin{align*}
\frac{1}{6}+\frac{(\log4-\log3)\sqrt{3}}{8\pi\log \frac{1}{\delta}}+O\left(\frac{1}{\log^2\frac{1}{\delta}}\right)
\approx  
\frac{1}{6}+\frac{0.019826}{\log \frac{1}{\delta}},
\end{align*}
i.e., it is less likely that the weighted counts of primes in three consecutive intervals (from left to right) appear in ascending or descending order.

The remaining corollaries describe a sharp phase transition from all races between primes in short intervals being asymptotically unbiased to the emergence of biased races.

When \( r = o\left(\frac{\log 1/\delta}{\log \log 1/\delta} \right) \)  as \( \delta \to 0^+ \), the following is a short-interval analog of \cite[Theorem~1.2]{MR3773805}.

\begin{corollary} \label{cor:bias}
Assume RH and LI. Given a sufficiently small $\delta>0$ and
an integer $2 \leq r \leq  \frac{\log 1/\delta}{\log \log 1/\delta},$ if $T_{[r]} \leq \delta^{-\frac{1}{10}},$ then 
\begin{align*}
\rho(\delta;\boldsymbol{t})=\frac{1}{r!}\left( 1+O\left( \frac{r\log r}{\log \frac{1}{\delta}} \right) \right).
\end{align*}
\end{corollary}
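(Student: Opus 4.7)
My plan is to approximate $\rho(\delta; \boldsymbol{t})$ by a Gaussian probability via the quantitative CLT already established, and then compare that Gaussian probability with $1/r!$ by interpolating the covariance from $\mathcal{C}$ down to the identity. Put $L := \log(1/\delta)$ and $B := \{\boldsymbol{x} \in \mathbb{R}^r : x_1 > x_2 > \cdots > x_r\}$, so that $\rho(\delta; \boldsymbol{t}) = \mathbb{P}_x^{\log}(\widetilde{\boldsymbol{E}}(x; \delta, \boldsymbol{t}) \in B)$ and the target discrepancy from $1/r!$ has relative size $(r \log 2r)/L$.

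The first step is to reduce $\rho(\delta; \boldsymbol{t})$ to $\mathbb{P}(\mathcal{N}(\boldsymbol{0}, \mathcal{C}) \in B)$. Since $B$ has infinite measure, truncate to $B_M := B \cap [-M, M]^r$ with $M \asymp \sqrt{L}$: on $B_M$ Theorem \ref{cor:compare} or Theorem \ref{thm:clt} applies, and the factor $r!$ in $\meas(B_M) = (2M)^r/r!$ absorbs the otherwise growing $(\log(1/\delta))^{r/2}$ factor appearing in those error bounds. The two complementary tails $\mathbb{P}_x^{\log}(\|\widetilde{\boldsymbol{E}}\|_\infty > M)$ and $\mathbb{P}(\|\mathcal{N}(\boldsymbol{0}, \mathcal{C})\|_\infty > M)$ are controlled by Chebyshev's inequality together with the diagonal variance estimate $V_j \sim \delta L$ from Proposition \ref{thm:cov}.

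The second step is a Gaussian interpolation along $\mathcal{C}_\theta := (1-\theta) I + \theta \mathcal{C}$ for $\theta \in [0,1]$. The density $p_\theta$ of $\mathcal{N}(\boldsymbol{0}, \mathcal{C}_\theta)$ satisfies $\partial_\theta p_\theta = \frac{1}{2} \sum_{j,k} (\mathcal{C} - I)_{jk} \partial_{x_j} \partial_{x_k} p_\theta$ (seen immediately on the Fourier side), so since $(\mathcal{C} - I)_{jj} = 0$, integrating over $B$ and over $\theta$ gives
\[
\mathbb{P}(\mathcal{N}(\boldsymbol{0}, \mathcal{C}) \in B) - \frac{1}{r!} = \sum_{j < k} (\mathcal{C} - I)_{jk} \int_0^1 \int_B \frac{\partial^2 p_\theta}{\partial x_j \partial x_k}\, dx\, d\theta.
\]
By Proposition \ref{thm:cov}, $|(\mathcal{C} - I)_{jk}| \ll \Delta(|t_j - t_k|)/L$; after sorting the $t_i$'s and using $\Delta(u) \sim 1/(2u)$ together with the separation $|t_j - t_k| \geq 1$, one obtains $\sum_{j<k} |(\mathcal{C} - I)_{jk}| \ll (r \log r)/L$. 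A Gaussian integration by parts then shows that $\int_B \partial_{x_j} \partial_{x_k} p_\theta\, dx$ is a small perturbation of $(1/r!)\,\mathbb{E}[Y_{(j)} Y_{(k)}]$, where $Y_{(j)}$ denotes the $j$-th largest among $r$ i.i.d.\ standard normals.

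The main obstacle is proving the uniform bound $|\int_B \partial_{x_j} \partial_{x_k} p_\theta\, dx| \ll 1/r!$: a pointwise estimate of $\mathbb{E}[Y_{(j)} Y_{(k)}]$ loses a factor of $\log r$ at the extremes since $\mathbb{E}[Y_{(1)}] \asymp \sqrt{2 \log r}$. Removing this spurious logarithm requires exploiting the cancellation $\sum_{j \neq k} \mathbb{E}[Y_{(j)} Y_{(k)}] = 0$ (from $\sum_j Y_{(j)} = \sum_j Y_j$, whose variance is $r$) against the sign-definite, monotone off-diagonal structure of $\mathcal{C} - I$ given by Proposition \ref{thm:cov} and \eqref{eq:coulomb}. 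Alternatively, one could perform a Lindeberg-type coordinate swap, replacing $\widetilde{E}(x; \delta, t_j)$ one at a time by an independent standard normal and bounding each increment by the Gaussian measure of a thin neighbourhood of $\{x_i = x_{i+1}\}$, in the spirit of Harper--Lamzouri \cite{MR3773805}. Combining this refined estimate with the first step then yields the claim.
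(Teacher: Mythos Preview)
Your first step---truncating $B$ to $B_M$ and applying Theorem~\ref{thm:clt} or~\ref{cor:compare}, gaining a factor $1/r!$ from $\meas(B_M)=(2M)^r/r!$---is sound and essentially parallels what the paper does. The divergence, and the gap you yourself flag, is in the second step.

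The paper does not interpolate along $\mathcal{C}_\theta$. Instead it invokes Lemma~\ref{lem:finale} (which rests on the almost-identity matrix estimates of Lemma~\ref{lem:matrix}) to obtain a \emph{pointwise} sandwich
\[
\frac{\exp\bigl(-\tfrac{1}{2}\langle\mathcal{C}^{-1}\boldsymbol{x},\boldsymbol{x}\rangle\bigr)}{(2\pi)^{r/2}(\det\mathcal{C})^{1/2}}
=\Bigl(1+O\Bigl(\frac{r}{L^2}\Bigr)\Bigr)\,\frac{1}{(2\pi)^{r/2}}\exp\Bigl(-\tfrac{1}{2}\|\boldsymbol{x}\|^2\bigl(1+O(\tfrac{\log 2r}{L})\bigr)\Bigr).
\]
The bounding functions on the right are \emph{radial}, so by permutation symmetry their integrals over the simplicial cone $B=\{x_1>\cdots>x_r\}$ equal exactly $1/r!$ times their integrals over all of $\mathbb{R}^r$; a one-line rescaling then gives $(1+O(\log 2r/L))^{-r/2}=1+O(r\log 2r/L)$. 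No order statistics, no interpolation---the factor $1/r!$ drops out of symmetry for free.

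Your interpolation route is a reasonable alternative in principle, but the obstacle you identify is real and your two suggested repairs are not developed enough to close it. The cancellation $\sum_{j\neq k}\mathbb{E}[Y_{(j)}Y_{(k)}]=0$ is usable only against \emph{constant} coefficients; here the $(\mathcal{C}-I)_{jk}$ vary like $1/(|t_j-t_k|L)$, and since the statement must hold uniformly over all admissible $\boldsymbol{t}$ there is no a~priori relation between the ordering of the $t_j$'s and the race indices $j$ that would let you exploit any monotone structure. The Lindeberg swap of \cite{MR3773805} could plausibly be pushed through, but you have not carried it out, and it is nontrivial machinery rather than a short fix. The paper's radial-density argument via Lemma~\ref{lem:finale} sidesteps all of this in two lines.
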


In particular, as $\delta \to 0^+,$ all $r$-way prime number races remain asymptotically unbiased, as long as $r=o\left( \frac{\log 1/\delta}{\log \log 1/\delta} \right).$

When $r \asymp \frac{\log 1/\delta}{\log \log 1/\delta}$ for any sufficiently small $\delta>0$, however, it turns out there exist $r$ intervals for which the corresponding race is noticeably biased (see \cite[Theorem 3]{MR3773805} and \cite[Theorem 1.2]{MR3961320} in the context of the Shanks--R\'{e}nyi prime number race). Following \cite{MR3773805} and \cite{MR3961320}, we first introduce the density
\begin{align*}
\rho_s(\delta;\boldsymbol{t}):=
\mathbb{P}_x^{\log} \left( 
\widetilde{E}(x;\delta,t_1)> \widetilde{E}(x;\delta,t_2)
>\cdots > \widetilde{E}(x;\delta,t_s)
>\max_{s < j \leq r} \widetilde{E}(x;\delta,t_j)
\right),
\end{align*}
which concerns the ordering of the first $s$ contestants in a race with $r$ competitors.

\begin{corollary} \label{cor:extremebias}

Assume RH and LI. 
Let $0<\varepsilon \leq 1.$ Then there exists $\eta=\eta(\varepsilon)>0$ such that the following holds. Given a sufficiently small $\delta>0$ and
integers $1 \leq  r \leq  \frac{\log 1/\delta}{\log \log 1/\delta}, 1 \leq s \leq \eta r,$
if $T_{[r]} \leq \delta^{-\frac{1}{10}},$  
then as soon as $s \to \infty$ with $\delta \to 0^+,$ we have 
\begin{align*}
\rho_s(\delta;\boldsymbol{t}) \leq \exp \left( o(1)-(2-\varepsilon) \cdot \frac{\log (r/s)}{\log \frac{1}{\delta}} \sum_{\substack{1 \leq j < k \leq s}}\Delta(|t_j-t_k|)\right) \frac{(r-s)!}{r!},
\end{align*}
\label{not:rhosdeltaboldsymbolt}provided that $|t_j-t_k|\geq \log \frac{1}{\delta}$ whenever $\max\{j,k \}>s$ for $1 \leq j \neq k \leq r.$ 
\end{corollary}

\begin{corollary} \label{cor:extremebiasii}
Assume RH and LI. Given a sufficiently small $\delta>0$ and an
integer $ \frac{\log 1/\delta}{\log \log 1/\delta} \ll r \leq  \frac{\log 1/\delta}{\log \log 1/\delta},$ there exists an absolute constant $\eta_0>0$ and $\boldsymbol{t} \in \mathbb{R}^r$ such that 
\begin{align*}
\rho(\delta;\boldsymbol{t}) \leq \exp \left( -\eta_0 \cdot  \frac{r\log \log \frac{1}{\delta}}{\log \frac{1}{\delta}} \right) \frac{1}{r!}.
\end{align*}
\end{corollary}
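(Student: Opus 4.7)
The strategy is to deduce the bound from Corollary \ref{cor:extremebias} by choosing $\boldsymbol{t}$ carefully and passing from the resulting bound on $\rho_s$ to one on $\rho$ via a decoupling argument. Fix a small absolute constant $\eta\in(0,1)$ compatible with the hypothesis of Corollary \ref{cor:extremebias} at $\epsilon=1$, and set $s:=\lfloor\eta r\rfloor$. Define $\boldsymbol{t}$ by $t_j:=j$ for $1\leq j\leq s$ (a tight cluster) and $t_j:=s+(j-s)\lceil\log(1/\delta)\rceil$ for $s<j\leq r$ (a dilute spread). Then $T_{[r]}\ll r\log(1/\delta)\leq (\log(1/\delta))^2/\log\log(1/\delta)\leq\delta^{-1/10}$, and $|t_j-t_k|\geq\log(1/\delta)$ whenever $\max\{j,k\}>s$, so all the hypotheses of Corollary \ref{cor:extremebias} are met (and $s\to\infty$ as $\delta\to 0^+$ since $r\to\infty$).

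A computation using $\Delta(d)\sim 1/(2d)$ gives
\[
\sum_{1\leq j<k\leq s}\Delta(|j-k|)=\sum_{d=1}^{s-1}(s-d)\Delta(d)\sim\frac{s\log s}{2}.
\]
Since $\log(r/s)=\log(1/\eta)+o(1)$ and $\log s\sim\log\log(1/\delta)$, Corollary \ref{cor:extremebias} with $\epsilon=1$ yields
\[
\rho_s(\delta;\boldsymbol{t})\leq\exp\!\left(-\frac{\eta\log(1/\eta)}{2}\,R+o(R)\right)\frac{(r-s)!}{r!},\qquad R:=\frac{r\log\log(1/\delta)}{\log(1/\delta)},
\]
so the exponent is essentially $-R/(2e)$ at the maximizing choice $\eta=1/e$.

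Writing $\rho_s=\sum_{\sigma\in S_{r-s}}\rho_{s,\sigma}$, where $\rho_{s,\sigma}$ denotes the logarithmic density of the event that the first $s$ coordinates are in identity order while the last $r-s$ follow $\sigma$ below them, one has $\rho=\rho_{s,\mathrm{id}}$. If we can establish the near-exchangeability
\[
\rho_{s,\sigma}=(1+o(1))\,\rho_{s,\mathrm{id}}\quad\text{uniformly in }\sigma\in S_{r-s},
\]
then $\rho=(1+o(1))\rho_s/(r-s)!$ and the conclusion follows with any $\eta_0<\eta\log(1/\eta)/2$. By Theorem \ref{cor:compare}, whose total-variation error is $O(\delta^{1/5}\,\mathrm{polylog})$ in the present parameter range, it suffices to prove this in the Gaussian model $\mathcal{N}(\boldsymbol{0},\mathcal{C})$. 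The Gaussian density ratio $f_\mathcal{C}(P_\sigma\boldsymbol{y})/f_\mathcal{C}(\boldsymbol{y})=\exp(-\tfrac{1}{2}\boldsymbol{y}^{T}(P_\sigma^{T}\mathcal{C}^{-1}P_\sigma-\mathcal{C}^{-1})\boldsymbol{y})$ can be analyzed by expanding $\mathcal{C}^{-1}=I+A+O(A^2)$ with $A=I-\mathcal{C}$: since our spacing enforces $|A_{jk}|=O(1/\log^2(1/\delta))$ whenever $\max\{j,k\}>s$, the relevant quadratic is $O(r^2/\log^2(1/\delta))=O(1/(\log\log(1/\delta))^2)=o(1)$ for typical $\boldsymbol{y}$ in the region of integration, with Gaussian concentration handling the tail.

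The main obstacle is precisely this decoupling step. Naive total-variation estimates between $\mathcal{N}(\boldsymbol{0},\mathcal{C})$ and its block-diagonal analogue $\mathcal{N}(\boldsymbol{0},\mathcal{C}_{SS}\oplus I_{r-s})$ far exceed the quantities $\rho_{s,\sigma}$ (of order $1/r!$) and thus cannot be used directly to compare individual orthant probabilities; one must instead exploit the pointwise multiplicative control of the density ratio and carry out a careful truncation of the Gaussian tail in order to transform the additive estimates into the multiplicative $(1+o(1))$ control needed above.
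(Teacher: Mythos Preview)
Your setup --- the configuration $\boldsymbol{t}$, the choice $s=\lfloor\eta r\rfloor$, the evaluation of $\sum_{j<k\leq s}\Delta(|t_j-t_k|)\sim \tfrac{1}{2}s\log s$, and the application of Corollary~\ref{cor:extremebias} --- is correct and matches the paper. The divergence is in the final step.

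You write $\rho_s(\delta;\boldsymbol{t})=\sum_{\sigma\in S_{r-s}}\rho_{s,\sigma}$ and then try to prove the \emph{near-exchangeability} $\rho_{s,\sigma}=(1+o(1))\rho_{s,\mathrm{id}}$ uniformly in $\sigma$, which you yourself flag as ``the main obstacle.'' This step is unnecessary, and the difficulty you describe is self-inflicted. The corollary only asks for the \emph{existence} of some $\boldsymbol{t}$; it does not commit you to the identity ordering on the last $r-s$ coordinates. Since the $(r-s)!$ terms $\rho_{s,\sigma}$ are nonnegative and sum to at most $\exp(-\eta_0 R)\,(r-s)!/r!$, the pigeonhole principle (minimum $\leq$ average) immediately yields some $\sigma\in S_{r-s}$ with
\[
\rho_{s,\sigma}(\delta;\boldsymbol{t}) \;\leq\; \exp\!\left(-\eta_0\,\frac{r\log\log\frac{1}{\delta}}{\log\frac{1}{\delta}}\right)\frac{1}{r!}.
\]
But $\rho_{s,\sigma}(\delta;\boldsymbol{t})=\rho(\delta;\boldsymbol{t}^\sigma)$ where $\boldsymbol{t}^\sigma:=(t_1,\ldots,t_s,t_{\sigma(s+1)},\ldots,t_{\sigma(r)})$, so taking this permuted vector as the $\boldsymbol{t}$ in the statement finishes the proof in one line. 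This is exactly what the paper does; there is no Gaussian density-ratio analysis, no block-diagonal comparison, and no tail truncation.
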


In contrast to \cite[Theorem 1.2]{MR3773805} and \cite[Theorem 1.2]{MR3961320}, where the phase transition lies within $[\log q/(\log \log q)^4, \log q],$ we can pinpoint the critical threshold in our setting, owing to the transparent correlation structure (see Lemma \ref{lem:eq:coulomb}) and refined estimates for the determinant and inverse of almost identity matrices (see Lemma \ref{lem:matrix}). This enables us to gain extra factors of $(\log \log \frac{1}{\delta})$ and $ (\log \log \frac{1}{\delta})^2 $, respectively, in order to generate a stronger bias by aggregating all the pairwise repulsions among the first $s$ intervals. It opens the gate to a sharper phase transition in the context of the Shanks--R\'{e}nyi prime number race, which we leave for future investigation.

Unfortunately, a Berry--Esseen type argument imposes a limitation on the number of intervals $r$ not exceeding $\frac{\log 1/\delta}{\log \log 1/\delta}.$ By employing either Stein’s method or the Lindeberg type method (see \cite{MR3773805} and \cite{MR3961320} respectively), it is certainly plausible to extend the range of $r$ up to a smaller power of $\delta^{-1},$ thus enabling the existence of extremely biased $r$-way prime number races. However, as our primary objective is to prove Theorems \ref{thm:clt} and \ref{cor:compare}, specifically in establishing a good upper bound on the total variation distance, we opt for the Fourier analytic approach. 





\section{Useful lemmas}
For convenience, we record here some properties of $w(s)$ and $\Delta(t)$, defined respectively in (\ref{eq:wsdt}) and (\ref{eq:biddelta}). 
\begin{lemma} \label{lem:eq:crude}
Let $\delta>0$ be sufficiently small. Then for $|x|\leq 1$ and $|y|>10,$ we have 
\begin{align*}
w(s)=w(x+iy) \leq 10 \cdot \min \left\{  y\delta, \frac{1}{|s|} \right\}.
\end{align*}
\begin{proof}
This follows from the definition of $w(s)$ and Taylor's theorem.
\end{proof}
\end{lemma}

\begin{lemma}  \label{lem:eq:coulomb}
For $t \geq 1,$ we have $\Delta(t) < 1.$ Also, 
as $t \to \infty,$ we have
\begin{align*}
\Delta(t)= \left(\frac{1}{2}+o(1) \right)t^{-1}.
\end{align*}
\begin{proof}
This follows from the definition of $\Delta(t)$ and Taylor's theorem.
\end{proof}
\end{lemma}

The following classical estimate for the number of non-trivial zeros is used throughout the paper without further comment.

\begin{lemma}[Riemann–von Mangoldt formula] \label{lem:riemvm}
Let $T \geq 2.$ Then
\begin{align*}
N(T)=\frac{T}{2\pi} \log \frac{T}{2\pi}-\frac{T}{2\pi}+O(\log T).
\end{align*}
\begin{proof}
See \cite[p. 98]{MR606931}.
\end{proof}
\end{lemma}

\section{Proof of Proposition \ref{thm:rs}}

While one may apply \cite[Theorem 4]{MR3261965}, which is a generalization of the work of Rubinstein and Sarnak \cite{MR1329368}, we provide a
detailed proof for the sake of completeness.


The following formula allows us to relate primes to zeros (under RH).
\begin{lemma}[Explicit formula] \label{lem:explicit}
Assume RH. Let $Z\geq 1.$ Then
\begin{align*}
E(x;\delta,t)
=-\sum_{|\gamma|\leq Z}w(\rho)x^{i\gamma}+O\left( \frac{\sqrt{x}\log^2 (xZ)}{Z}+\frac{\log x}{\sqrt{x}} \right).
\end{align*}
\begin{proof}
It follows immediately from the truncated von Mangoldt's explicit formula (see \cite[p. 109]{MR606931}) that
\begin{align*}
\psi(x)=x-\sum_{|\gamma|\leq Z}\frac{x^{\rho}}{\rho}
+O \left( \frac{x\log^2 (xZ)}{Z}+\log x \right)
\end{align*}
for $x \geq 2.$
\end{proof}
\end{lemma}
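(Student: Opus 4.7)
The plan is to apply the truncated von Mangoldt explicit formula separately at the two endpoints $y_{\pm} := (1 + (t \pm \tfrac{1}{2})\delta) x$ of the short interval, subtract the resulting expressions, and normalize by $\sqrt{x}$. Under RH, the standard truncated formula (as recalled from Davenport) gives
$$\psi(y_{\pm}) = y_{\pm} - \sum_{|\gamma| \leq Z} \frac{y_{\pm}^{\rho}}{\rho} + O\!\left( \frac{y_{\pm} \log^2 (y_{\pm} Z)}{Z} + \log y_{\pm} \right),$$
where the constant $-\log(2\pi)$ and the trivial-zero contribution are absorbed into the $O(\log y_{\pm})$ term; crucially, both pieces are identical at the two endpoints (modulo a negligible difference from $\tfrac{1}{2}\log(1-y^{-2})$) and so cancel when we subtract.

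The subtraction then produces two blocks. The linear main terms contribute $y_+ - y_- = \delta x$, which exactly cancels the $-\delta x$ built into the definition of $E(x;\delta,t)$. The nontrivial-zero sum becomes
$$-\sum_{|\gamma| \leq Z} \frac{y_+^{\rho} - y_-^{\rho}}{\rho} = -\sum_{|\gamma| \leq Z} x^{\rho} \cdot \frac{(1 + (t+\tfrac{1}{2})\delta)^{\rho} - (1 + (t-\tfrac{1}{2})\delta)^{\rho}}{\rho} = -\sum_{|\gamma| \leq Z} x^{\rho} w(\rho),$$
by the very definition of $w(\rho)$ in the Preliminaries section.

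Finally, under RH we have $x^{\rho} = \sqrt{x}\, x^{i\gamma}$, so dividing by $\sqrt{x}$ turns each $x^{\rho}$ into $x^{i\gamma}$ and produces the claimed main term $-\sum_{|\gamma| \leq Z} w(\rho)\, x^{i\gamma}$. For the error, since $y_{\pm} \asymp x$, each application of the explicit formula contributes $O(x \log^2(xZ)/Z + \log x)$; after subtraction and division by $\sqrt{x}$ this becomes exactly $O(\sqrt{x} \log^2(xZ)/Z + \log x/\sqrt{x})$, matching the statement. I expect no real obstacle here: the argument is pure bookkeeping, the only mild care being to verify that the constant and trivial-zero pieces of the explicit formula either cancel on subtraction or are dominated by $O(\log x)$.
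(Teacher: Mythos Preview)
Your proposal is correct and follows exactly the approach the paper intends: the paper's proof consists solely of quoting the truncated von Mangoldt explicit formula and declaring that the lemma ``follows immediately,'' and what you have written is precisely the bookkeeping that makes this immediate (applying the formula at $y_\pm$, subtracting, identifying $w(\rho)$, and dividing by $\sqrt{x}$). There is nothing to add.
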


The following ergodic result allows us to invoke the LI hypothesis.
\begin{lemma}[Kronecker--Weyl] \label{lem:KW}
Let $\alpha_1,\dots,\alpha_N$ be linearly independent over $\mathbb{Q}$. Then the linear flow
\begin{align*}
t \mapsto (e^{2\pi i t \alpha_1}, \ldots, e^{2\pi i t \alpha_N})
\end{align*}
 for $t \geq 0$ is uniformly distributed over the torus $\mathbb{T}^N.$
\begin{proof}
See \cite[Exercise 9.27]{MR0419394}.
\end{proof}
\end{lemma}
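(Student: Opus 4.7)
The plan is to reduce the claim to Weyl's equidistribution criterion applied to the continuous linear flow $\boldsymbol{\phi}(t) := (t\alpha_1,\ldots,t\alpha_N) \bmod 1$ on the torus $\mathbb{T}^N.$ By Stone--Weierstrass the characters $\boldsymbol{x} \mapsto e^{2\pi i \langle \boldsymbol{k},\boldsymbol{x}\rangle}$ with $\boldsymbol{k} \in \mathbb{Z}^N$ span a dense subalgebra of $C(\mathbb{T}^N),$ and indicator functions of boxes can be sandwiched between continuous functions from above and below. Hence it suffices to verify that
$$\lim_{T \to \infty} \frac{1}{T} \int_0^T e^{2\pi i \langle \boldsymbol{k},\boldsymbol{\phi}(t)\rangle}\, dt = 0 \qquad \text{for every } \boldsymbol{k} \in \mathbb{Z}^N \setminus \{\boldsymbol{0}\}.$$

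The key step is the direct computation. Since $\langle \boldsymbol{k},\boldsymbol{\phi}(t)\rangle \equiv t\beta \pmod 1$ with $\beta := k_1 \alpha_1 + \cdots + k_N \alpha_N,$ the Weyl average collapses to
$$\frac{1}{T}\int_0^T e^{2\pi i t\beta}\, dt = \frac{e^{2\pi i T\beta} - 1}{2\pi i T\beta},$$
provided $\beta \neq 0.$ This is precisely where the hypothesis enters: for a nonzero $\boldsymbol{k} \in \mathbb{Z}^N$ one must have $\beta \neq 0,$ since otherwise $\sum_j k_j \alpha_j = 0$ would constitute a nontrivial rational linear relation among $\alpha_1,\ldots,\alpha_N,$ contradicting $\mathbb{Q}$-linear independence. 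Hence the right-hand side is $O(1/T)$ and vanishes as $T \to \infty.$

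I do not anticipate any real obstacle here; this is a textbook application of Weyl's criterion in which the $\mathbb{Q}$-linear independence serves exactly to rule out a resonant frequency. The only mildly delicate point is the passage from convergence against exponentials (the Fourier criterion) to equidistribution in the sense of Lebesgue measure on boxes, handled by the routine sandwich argument alluded to above. In short, the argument has two moving parts: a functional-analytic/Fourier input (Weyl's criterion via Stone--Weierstrass and sandwiching) and the elementary arithmetic input that $\sum k_j \alpha_j \neq 0$ for $\boldsymbol{k} \neq \boldsymbol{0}.$
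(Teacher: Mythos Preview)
Your argument is correct and is precisely the standard proof via Weyl's criterion; the paper itself does not supply a proof but merely cites an exercise in Einsiedler--Ward, which amounts to the same computation you carried out. There is nothing to add.
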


Let $1 \leq Y \leq Z.$ Then by Lemma \ref{lem:explicit}, we have
\begin{align} \label{eq:truncate}
E(x;\delta,t)=E^{(Y)}(x;\delta,t)+\varepsilon^{(Y,Z)}(x;\delta,t),
\end{align}
where
\begin{align*}
E^{(Y)}(x;\delta,t):&=-\sum_{|\gamma|\leq Y}w(\rho)x^{i\gamma}\\
&=\Re\left(-2\sum_{0<\gamma \leq Y}\omega(\rho)x^{i \gamma}  \right)
\end{align*}
and
\begin{align*}
\varepsilon^{(Y,Z)}(x;\delta,t):=-\sum_{Y<|\gamma|\leq Z}w(\rho)x^{i\gamma}+O\left( \frac{\sqrt{x}\log^2 (xZ)}{Z}+\frac{\log x}{\sqrt{x}} \right).
\end{align*}
To lighten the notation, we write 
$\boldsymbol{E}(x)=\boldsymbol{E}^{(Y)}(x)+\boldsymbol{\varepsilon}^{(Y,Z)}(x)$ for $\boldsymbol{E}(x;\delta,\boldsymbol{t})=\boldsymbol{E}^{(Y)}(x;\delta,\boldsymbol{t})+\boldsymbol{\varepsilon}^{(Y,Z)}(x;\delta,\boldsymbol{t}).$

To prove Proposition \ref{thm:rs}, we require an $L^2$-norm bound on $\varepsilon^{(Y,Z)}(x;\delta,t).$

\begin{lemma} \label{lem:chebyshev}
Let $\delta>0$ be sufficiently small and $|t| \leq \delta^{-1}.$ Suppose $U, Y \geq 2$ with $Z=e^U \geq Y.$ Then
\begin{align*}
\int_{1}^U \left| {\varepsilon}^{(Y,Z)}(e^u;\delta,t) \right|^2 du
\ll 1+ \frac{U\log^2 Y}{Y}.
\end{align*}
\begin{proof}
The integral is 
\begin{gather*}
\ll \int_{1}^{U} \left| \sum_{Y<|\gamma| \leq Z}w(\rho)e^{i\gamma u} \right|^2 du
+e^{-U} U^5  + \int_1^U e^{-u} u^2  du \\
= \int_{1}^{U} \left| \sum_{Y<|\gamma| \leq Z}w(\rho)e^{i\gamma u} \right|^2 du +O(1).
\end{gather*}
Opening the square, the integral becomes
\begin{align*}
\sum_{Y<|\gamma|,|\gamma'|\leq Z}w(\rho)w(\rho')
\int_{1}^{U}e^{i(\gamma-\gamma')u}du
\ll \sum_{Y<|\gamma|,|\gamma'|\leq Z}\left|w(\rho)\right| \left|w(\rho')\right| \min \left\{ U,\frac{1}{|\gamma-\gamma'|} \right\}.
\end{align*}
Applying Lemma \ref{lem:eq:crude}, this is by partial summation
\begin{gather*}
\ll \int_Y^{\infty} \int_Y^{\infty} \frac{1}{xy}
\min \left\{ U,\frac{1}{|x-y|}  \right\}d N(x)d N(y)
\\
\ll \int_Y^{\infty} \int_Y^{\infty} \frac{\log x \log y}{xy}
\min \left\{ U,\frac{1}{|x-y|}  \right\}dxdy.
\end{gather*}
We split the integral into 
\begin{align}
I_1:=U{\iint}_{\substack{x,y \geq Y\\|x-y| \leq U^{-1}}} \frac{\log x \log y}{xy}dxdy
\ll& U\int_{x>Y} \frac{\log^2 x}{x^2} \left(\int_{\substack{y>Y\\ |x-y| \leq U^{-1}}} dy \right) dx \nonumber  \\
\ll& \frac{\log^2 Y}{Y}, \label{eq:int1fgs}
\end{align}
\begin{gather}
I_2:={\iint}_{\substack{x,y \geq Y\\U^{-1}<|x-y| \leq U^{-1}(Y/\log^2 Y) } }
\frac{\log x \log y}{xy}
\cdot \frac{ dxdy}{|x-y|}   \nonumber \\
\leq \sum_{1 \leq j \leq  Y/\log^2 Y}  
{\iint}_{\substack{x,y \geq Y\\jU^{-1}<|x-y| \leq (j+1)U^{-1} } }
\frac{\log x \log y}{xy}
\cdot \frac{dxdy}{|x-y|}  \nonumber \\
\ll \frac{\log^3 Y}{Y}, \label{eq:int2fgs}
\end{gather}
and
\begin{gather}
I_3:={\iint}_{\substack{x,y \geq Y\\|x-y|>U^{-1}(Y/\log^2 Y)}} 
\frac{\log x \log y}{xy}
\cdot \frac{dxdy}{|x-y|} \ll \frac{U\log^2 Y}{Y}. \label{eq:int3fgs}
\end{gather}
Since $U \geq \log Y,$ the lemma follows from combining (\ref{eq:int1fgs}), (\ref{eq:int2fgs}) and (\ref{eq:int3fgs}).
\end{proof}
\end{lemma}

\begin{proof}[Proof of Proposition \ref{thm:rs}]
Applying a theorem of Portmanteau \cite[Theorem 2.1]{MR0233396}, it suffices to verify Definition \ref{def} for all bounded Lipschitz continuous functions. Now let $f:\mathbb{R}^r \to \mathbb{R}$ be a bounded Lipschitz continuous function satisfying the inequality
\begin{align*}
|f(\boldsymbol{x})-f(\boldsymbol{y})| \leq c_f \|\boldsymbol{x}-\boldsymbol{y}\|
\end{align*}
for all $\boldsymbol{x},\boldsymbol{y} \in \mathbb{R}^r,$ where $c_f>0$ is the Lipschitz constant. Then for any $U \geq 1,$ we have
\begin{align*}
\frac{1}{U}\int_{1}^U f\left( \boldsymbol{E}(e^u) \right) du
&=\frac{1}{U}\int_{1}^U f\left( \boldsymbol{E}^{(Y)}(e^u)+\boldsymbol{\varepsilon}^{(Y,Z)}(e^u) \right) du\\
&=\frac{1}{U}\int_{1}^U f\left( \boldsymbol{E}^{(Y)}(e^u)\right) du
+O \left( \frac{c_f}{U} \int_{1}^U \| \boldsymbol{\varepsilon}^{(Y,Z)}(e^u) \| du\right) \\
&=\frac{1}{U}\int_{1}^U f\left( \boldsymbol{E}^{(Y)}(e^u)\right) du 
+ O \left( \frac{c_f}{U} \sum_{j=1}^r \int_{1}^U | {\varepsilon}^{(Y,Z)}(e^u;\delta,t_j) | du  \right).
\end{align*}
Let $N=N(Y):=\#\{0 <\gamma \leq Y\}$ and list the zeros as $0<\gamma_1<\cdots<\gamma_N\leq Y$ (simplicity of zeros is guaranteed by LI). Then, we define the function $\varphi_N:\mathbb{T}^N \to \mathbb{R}^r$ by
\begin{align*}
\varphi_N\left( e^{2\pi i \boldsymbol{\theta}} \right)
:= \left(\Re \left( -2\sum_{n \leq N} w_1(\rho_n)e^{2\pi i \theta_n} \right),\dots, \Re \left( -2\sum_{n \leq N} w_r(\rho_n)e^{2\pi i \theta_n} \right) \right),
\end{align*}
where $e^{2\pi i \boldsymbol{\theta}}=\left( e^{2\pi i \theta_1}, \ldots, e^{2\pi i \theta_N} \right)$ and $ \rho_n=\frac{1}{2}+i\gamma_n$ for $n=1,\ldots,N.$
By definition, we have
\begin{align*}
\boldsymbol{E}^{(Y)}(e^u)  &=  
\left(\Re \left( -2\sum_{n \leq N} w_1(\rho_n)e^{i \gamma_n u} \right),\dots, \Re \left( -2\sum_{n \leq N} w_r(\rho_n)e^{i \gamma_n u} \right) \right)\\
&=\varphi_N \left( 
e^{i\gamma_1u},\ldots,e^{i\gamma_Nu}
\right).
\end{align*}
The assumption of LI implies that $\gamma_1/2\pi,\dots,\gamma_N/2\pi$ are linearly independent over $\mathbb{Q},$ and hence it follows from Lemma \ref{lem:KW} that for any fixed $Y \geq 2,$ we have
\begin{align*}
\underset{U \to \infty}{\lim}\frac{1}{U}\int_{1}^U f\left( \boldsymbol{E}^{(Y)}(e^u)\right) du
=\int_{[0,1]^N} \left(f \circ \varphi_N \right) \left( e^{2\pi i \boldsymbol{\theta}}\right)
d\boldsymbol{\theta}.
\end{align*}
On the other hand, the Cauchy--Schwarz inequality yields
\begin{align*}
\frac{1}{U}\int_{1}^U | \varepsilon^{(Y,Z)}(e^u;\delta,t_j) | du \leq 
\frac{1}{\sqrt{U}}
\left(\int_{1}^U | \varepsilon^{(Y,Z)}(e^u;\delta,t_j) |^2 du\right)^{\frac{1}{2}}
\end{align*}
for $j=1,\ldots,r$. Therefore, it follows from Lemma \ref{lem:chebyshev} that
\begin{align*}
\underset{U \to \infty}{\limsup} \,
\frac{1}{U}\int_{1}^U f\left( \boldsymbol{E}(e^u) \right) du
=\int_{[0,1]^N} \left(f \circ \varphi_N \right)  \left( e^{2\pi i \boldsymbol{\theta}}\right)
d\boldsymbol{\theta}+O\left( \frac{c_fr\log Y}{\sqrt{Y}} \right),
\end{align*}
and similarly,
\begin{align*}
\underset{U \to \infty}{\liminf} \,
\frac{1}{U}\int_{1}^U f\left( \boldsymbol{E}(e^u) \right) du
=\int_{[0,1]^N} \left(f \circ \varphi_N \right)  \left( e^{2\pi i \boldsymbol{\theta}}\right)
d\boldsymbol{\theta}+O\left( \frac{c_fr\log Y}{\sqrt{Y}} \right),
\end{align*}
Taking $Y \to \infty, $ we establish the existence of the limit
\begin{align} \label{eq:exist}
 \underset{U \to \infty}{\lim}
\frac{1}{U}\int_{1}^U f\left( \boldsymbol{E}(e^u) \right) du
=\lim_{N \to \infty}\int_{[0,1]^N} \left(f \circ \varphi_N \right)  \left( e^{2\pi i \boldsymbol{\theta}}\right)
d\boldsymbol{\theta},
\end{align}
which is $\underset{Y \to \infty}{\lim}\mathbb{E}(f(\boldsymbol{X}_{\delta, \boldsymbol{t}}^{(Y)}) ),$ where
 $\boldsymbol{X}_{\delta, \boldsymbol{t}}^{(Y)}$ is the truncated random vector
\begin{align*}
\left( \Re \left(2 \sum_{0<\gamma \leq Y} w_1(\rho)U_{\gamma}\right), \ldots, \Re \left(2 \sum_{0<\gamma \leq Y} w_r(\rho)U_{\gamma}\right) \right).
\end{align*}

Let $F_Y$ denote the corresponding distribution function, i.e., $F_Y(\boldsymbol{x}):=\mathbb{P}( \boldsymbol{X}_{\delta, \boldsymbol{t}}^{(Y)} \leq \boldsymbol{x} )$ for $\boldsymbol{x} \in \mathbb{R}^r.$
Then by Helly's selection principle, there exists an increasing subsequence
$\{Y_k\}$ such that $F_{Y_k}$ converges weakly to a generalized distribution function (increasing, right-continuous) $F_{\delta,\boldsymbol{t}}$ as $k \to \infty.$ In particular, it follows from (\ref{eq:exist}) that
\begin{align*}
\underset{U \to \infty}{\lim}
\frac{1}{U}\int_{1}^U f\left( \boldsymbol{E}(e^u) \right) du
=\int_{\mathbb{R}^r} f(\boldsymbol{x})dF_{\delta,\boldsymbol{t}}(\boldsymbol{x}).
\end{align*}
Taking $f \equiv 1,$ we conclude that $F_{\delta,\boldsymbol{t}}$ is a proper distribution function, i.e., 
\begin{align*}
\lim_{x_1,\ldots,x_r \to +\infty}F_{\delta,\boldsymbol{t}}(x_1,\ldots,x_r)=1.
\end{align*}

Finally, let $\mu_{\delta,\boldsymbol{t}}$ denote the probability measure induced by $F_{\delta,\boldsymbol{t}}$. In order to establish the correspondence with $\boldsymbol{X}_{\delta,\boldsymbol{t}},$ we apply L\'{e}vy's continuity theorem with Lemma \ref{lem:bessel}, which ensures that the characteristic functions of $\mu_{\delta,\boldsymbol{t}}$ and $\boldsymbol{X}_{\delta,\boldsymbol{t}}$ coincide. Therefore, the proposition follows.
\end{proof}

\section{Proof of Proposition \ref{thm:cov}}

Let $1 \leq j,k \leq r$ be fixed. 
Using the fact that
\begin{align*}
\frac{1}{\rho}=\frac{1}{\frac{1}{2}+i\gamma}=\frac{1}{i\gamma} \left( 1+O\left( \frac{1}{|\gamma|} \right) \right)
\end{align*}
and the assumption $t_j \leq T=T_{\{j,k\}} \leq \delta^{-\frac{1}{2}}\left( \log \frac{1}{\delta}\right)^{-1},$ so that
\begin{align*}
\left( 1+\left( t_j \pm \frac{1}{2} \right)\delta \right)^{\frac{1}{2}}=1+O\left( T\delta \right),
\end{align*}
we have
\begin{gather*} 
w_j(\rho)=\frac{1}{\rho}\left( \left( 
1+\left( t_j+\frac{1}{2} \right)\delta
\right)^{\rho}-\left( 1+\left( t_j-\frac{1}{2} \right)\delta \right)^{\rho} \right) \\
=\frac{1}{i\gamma}\left( 1+O\left( \frac{1}{|\gamma|} \right) \right)\left( \left( 1+O\left( T\delta \right) \right)\left( 
1+\left( t_j+\frac{1}{2} \right)\delta
\right)^{i\gamma} \right.\\
\left.-\left( 1+O\left( T\delta \right) \right)\left( 
1+\left( t_j-\frac{1}{2} \right)\delta
\right)^{i\gamma} \right)\\
=w_j(i\gamma)\left( 1+O\left(\frac{1}{|\gamma|} \right) \right)+O\left( \frac{T\delta}{|\gamma|} \right).
\end{gather*}
Similarly, we have
\begin{align*}
w_k(\rho)=w_k(i\gamma)\left( 1+O\left(\frac{1}{|\gamma|} \right) \right)+O\left( \frac{T\delta}{|\gamma|} \right).
\end{align*}
Multiplying the last two expressions together gives
\begin{gather*}
w_j(\rho)\overline{w_k(\rho)}=
w_j(i\gamma)w_k(-i\gamma)\left( 1+O\left( \frac{1}{|\gamma|} \right) \right) \\
+O\left( \frac{T\delta}{|\gamma|}(|w_j(i\gamma)|+|w_k(-i\gamma)|) \right)+O\left( \left(\frac{T\delta}{|\gamma|}\right)^2 \right).
\end{gather*}
Summing over $\gamma,$ we have
\begin{gather}
\sum_{\gamma}w_j(\rho)\overline{w_k(\rho)}
=\sum_{\gamma}w_j(i\gamma)w_k(-i\gamma)
+O \left( \sum_{\gamma} \frac{\left|w_j(i\gamma)w_k(-i\gamma)\right|}{|\gamma|} \right) \nonumber\\
+O\left(T\delta\sum_{\gamma}\frac{|w_j(i\gamma)|+|w_k(i\gamma)|}{|\gamma|}\right)
+O\left( T^2\delta^2 \sum_{\gamma}\frac{1}{|\gamma|^2} \right). \label{eq:3errors}
\end{gather}
Applying Lemma \ref{lem:eq:crude},  
the contribution of $|\gamma| \leq \frac{1}{T\delta}$ to the first error term is
\begin{align} \label{eq:et11}
\ll \sum_{|\gamma| \leq \frac{1}{T \delta}} \frac{\left|w_j(i\gamma)w_k(-i\gamma)\right|}{|\gamma|} 
&\ll T^2\delta^2
\sum_{|\gamma| \leq \frac{1}{T \delta}} \frac{1}{|\gamma|}\nonumber \\
&\ll T^2\delta^2 \int_{1}^{1/T\delta} \frac{\log t}{t}dt
\nonumber\\
&\ll T^2\delta^2 \log^2 \frac{1}{T \delta},
\end{align}
and the contribution of $|\gamma| > \frac{1}{T\delta}$ is
\begin{align} \label{eq:et12}
\ll \sum_{|\gamma| > \frac{1}{T \delta}} \frac{\left|w_j(i\gamma)w_k(-i\gamma)\right|}{|\gamma|}
&\ll \sum_{|\gamma| > \frac{1}{T \delta}} \frac{1}{|\gamma|^3} \nonumber\\
&\ll \int_{1/T\delta}^{\infty} \frac{\log t}{t^3} dt \nonumber\\
&\ll T^2\delta^2 \log \frac{1}{T \delta}.
\end{align}
Similarly, the second error term in (\ref{eq:3errors}) is
$\ll T^2\delta^2 \log^2 \frac{1}{T\delta},$ and the third error term is $\ll T^2 \delta^2.$

We write $w_j(i\gamma)w_k(-i\gamma)$ as
\begin{align} \label{eq:euler}
\frac{4}{\gamma^2}
\exp \left( \frac{1}{2}i \left( \lambda_j^+ + \lambda_j^- -  \lambda_k^+ - \lambda_k^-\right) \gamma \right)
\sin \left( \frac{1}{2}(\lambda_j^+ - \lambda_j^-)\gamma\right)
\sin \left( \frac{1}{2}(\lambda_k^+ - \lambda_k^-)\gamma\right),
\end{align}
where 
\begin{align*}
\lambda_j^{\pm}=\log \left( 1+\left( t_j \pm \frac{1}{2} \right)\delta \right), \quad
\lambda_k^{\pm}=\log \left( 1+\left( t_k \pm \frac{1}{2} \right)\delta \right).
\end{align*}
By Taylor expansion, we have
\begin{align} \label{eq:lambda1}
\lambda_j^+ + \lambda_j^- -  \lambda_k^+ - \lambda_k^-
=2 (t_j-t_k)\delta +O(\left(T\delta\right)^2) 
\end{align}
and
\begin{align} \label{eq:lambda2}
\lambda_j^+ - \lambda_j^-=\delta+O\left(T\delta^2\right) , \quad
\lambda_k^+ - \lambda_k^-=\delta+O\left(T\delta^2\right) .
\end{align}
Recall that 
\begin{align} \label{eq:real}
\sum_{\gamma}w_j(i\gamma)w_k(-i\gamma)=2\sum_{\gamma>0} \Re \left( w_j(i\gamma)w_k(-i\gamma) \right).
\end{align}
Substituting (\ref{eq:lambda1}) and (\ref{eq:lambda2}) into (\ref{eq:euler}), we have
\begin{align*}
\Re \left( w_j(i\gamma)w_k(-i\gamma) \right)&=\frac{4}{\gamma^2}
\cos \left(  \left( t_j-t_k \right) \delta \gamma +O \left( T^2\delta^2 |\gamma | \right) \right) \sin^2 \left( \frac{1}{2} \delta \gamma
+ O \left(  T \delta^2 |\gamma |\right) \right)\\
&=\frac{4}{\gamma^2}
\cos \left(  \left( t_j-t_k \right) \delta \gamma \right) \sin^2 \left( \frac{1}{2} \delta \gamma \right)+O\left( \min \left\{ \frac{1}{|\gamma|^2}, \frac{T^2\delta^2}{|\gamma|}  \right\} \right)
\end{align*}
using elementary trigonometric identities followed by Taylor expansion.

The contribution of the last error term to (\ref{eq:real}) is
\begin{align} \label{eq:et13}
\ll \sum_{\gamma>0}\frac{\min \{ 1, T^2\delta^2 \gamma \}}{\gamma^2}
&\ll T^2\delta^2 \sum_{0<\gamma \leq \frac{1}{T^2\delta^2}} \frac{1}{\gamma}
+\sum_{\gamma>\frac{1}{T^2\delta^2}}\frac{1}{\gamma^2} \nonumber\\
&\ll T^2\delta^2 \log^2 \frac{1}{T\delta}.
\end{align}
Therefore, we are left with estimating the sum
\begin{align*}
8\sum_{\gamma>0}\frac{\cos \left(  \left( t_j-t_k \right) \delta \gamma \right)}{\gamma^2}
 \sin^2 \left( \frac{1}{2} \delta \gamma \right).
\end{align*}
Using the trigonometric identity
\begin{align*}
\cos(2x \theta)\sin^2 (\theta)=
\frac{1}{2}\left( \sin^2((x+1)\theta)-2\sin^2(x \theta)+\sin^2((x-1)\theta) \right)
\end{align*}
with
$\theta=\frac{1}{2}\delta \gamma$ and $x=t_j-t_k$,
it remains to evaluate the sum
\begin{align*} 
4\sum_{\gamma>0}\frac{\sin^2 \left( y \delta \gamma \right /2)}{\gamma^2}
\end{align*}
for $y=t_j-t_k+1, t_j-t_k$ and $t_j-t_k-1.$

If $y=0,$ then the sum simply vanishes. Otherwise, suppose $y \neq 0.$ Then it can be expressed as the Riemann--Stieltjes integral
\begin{align} \label{eq:rsintegral}
4\int_{0}^{\infty} \frac{\sin^2 \left( y \delta t \right /2)}{t^2} dN(t).
\end{align}
Applying Lemma \ref{lem:riemvm}, we obtain
\begin{align*}
dN(t)=\frac{1}{2\pi}\log \left( \frac{t}{2\pi} \right)dt+dE(t),
\end{align*}
where $E(t) \ll \log (t+2).$ Integrating by parts, we have
\begin{align*}
\int_{0}^{\infty} \frac{\sin^2 \left( y \delta t \right /2)}{t^2} dE(t)&=\left[ \frac{\sin^2 \left( y \delta t \right /2)}{t^2} E(t)  \right]_{t=0^+}^{\infty}-\int_{0}^{\infty} \frac{d}{dt}\left(\frac{\sin^2 \left( y \delta t \right /2)}{t^2}\right) E(t) dt\\
&=-\int_{0}^{\infty} \frac{d}{dt}\left(\frac{\sin^2 \left( y \delta t \right /2)}{t^2}\right) E(t) dt.
\end{align*}
The quotient rule gives
\begin{align*}
\frac{d}{dt}\left(\frac{\sin^2 \left( y \delta t \right /2)}{t^2}\right) =
 \frac{y \delta t^2 \sin (y \delta t/2)\cos(y \delta t/2)-2t\sin^2 (y \delta t/2) }{t^4}
\end{align*}
and since $E(t)=0$ for $t \leq 1,$ we have
\begin{gather*}
\int_{0}^{\infty} \frac{d}{dt}\left(\frac{\sin^2 \left( y\delta t \right /2)}{t^2}\right) E(t) dt \\ 
\ll y \delta \int_{1}^{\infty} |\sin(y \delta t)| \frac{\log (t+2)}{t^2} dt+ \int_{1}^{\infty} \sin^2 \left( \frac{1}{2}y \delta t \right) \frac{\log (t+2)}{t^3} dt.
\end{gather*}
Using the fact that
$\sin x \ll \min\{|x|,1 \}$ and $|y| \leq 2T,$
this is
\begin{align} \label{eq:et14}
\ll y^2 \delta^2 \log \frac{1}{|y \delta|} \ll T^2\delta^2 \log \frac{1}{T \delta}.
\end{align}

On the other hand, the main term of (\ref{eq:rsintegral}) is
\begin{align*}
\frac{2}{\pi}\int_{0}^{\infty}\frac{\sin^2 \left( y \delta t \right /2)}{t^2} \log \left( \frac{t}{2\pi} \right) dt=
\frac{1}{\pi^2}\int_{0}^{\infty}\frac{\sin^2 (\pi y \delta x)}{x^2}\log x dx.
\end{align*}
Collecting the error terms in (\ref{eq:et11}), (\ref{eq:et12}), (\ref{eq:et13}) and (\ref{eq:et14}), the proposition follows from the next lemma with $\kappa=\pi y \delta$ and the assumption that $|t_j-t_k| \geq 1$ for $1 \leq j \neq k \leq r.$
\begin{lemma}
Let $\kappa \in \mathbb{R}.$ Then 
\begin{align*}
\int_{0}^{\infty} \left( \frac{\sin \kappa x}{x} \right)^2
\log x dx
=\frac{\pi}{2}|\kappa|(1-\gamma_{\mathbb{Q}}-\log 2|\kappa|).
\end{align*}
\begin{proof}
See \cite[p. 598]{MR3307944}.
\end{proof}
\end{lemma}

\section{Proof of Theorem \ref{thm:clt}} \label{section5}


Throughout the section, we always assume both
$\delta$ and $ r/\log(1/\delta)$ are sufficiently small,
$T=T_{[r]} \leq \delta^{-\frac{1}{10}},$ and $|t_j-t_k|\geq 1$ for $1 \leq j \neq k \leq r$ as in the statement of Theorem \ref{thm:clt}.

To prove Theorem \ref{thm:clt}, we made adjustments to several lemmas from \cite{MR2998146} and \cite{MR3063909}. Let us first establish several properties of the Fourier transform \( \hat{\mu}_{\delta, \boldsymbol{t}} \), defined below, which are essential to our proof.

\begin{lemma} \label{lem:bessel}
Let $\hat{\mu}_{\delta,\boldsymbol{t}}$ denote the Fourier transform of the measure $\mu_{\delta;\boldsymbol{t}},$ i.e.,
\begin{align*}
\hat{\mu}_{\delta,\boldsymbol{t}}\left( \boldsymbol{\xi} \right)
:=\int_{\mathbb{R}^r}e^{-i\langle \boldsymbol{x},\boldsymbol{\xi}\rangle}d\mu_{\delta;\boldsymbol{t}}(\boldsymbol{x})
\end{align*}
for $\boldsymbol{\xi} \in \mathbb{R}^r.$ Then 
\begin{align*}
\hat{\mu}_{\delta,\boldsymbol{t}}(\boldsymbol{\xi})=\prod_{\gamma>0}
J_0 \left( 2\left| \sum_{j=1}^r w_j(\rho)\xi_j \right| \right),
\end{align*}
where 
\begin{align*}
J_0(x):=\frac{1}{2\pi}\int_{0}^{2\pi}e^{ix\cos(\theta)} d\theta
\end{align*}
is the Bessel function of order $0$. 
\begin{proof}
By the proof of Proposition \ref{thm:rs}, we have
\begin{align} \label{eq:easycov}
\hat{\mu}_{\delta,\boldsymbol{t}}(\boldsymbol{\xi})&=\lim_{Y \to \infty}\mathbb{E}\left(\exp\left(-i\langle \boldsymbol{X}^{(Y)}_{\delta,\boldsymbol{t}},\boldsymbol{\xi} \rangle \right)\right) \nonumber\\
&=\prod_{\gamma>0}\mathbb{E}\left( \exp \left( -i \Re \left(2\sum_{j=1}^r w_j(\rho) U_{\gamma}\cdot \xi_j \right)\right) \right) \nonumber\\
&=\prod_{\gamma>0}\int_{0}^{1} \exp  
\left( 2i\Re \left(- \left(\sum_{j=1}^r w_j(\rho)\xi_j\right)e^{2\pi i \theta}\right) \right) d \theta.
\end{align}
For each $\gamma>0,$ we write
\begin{align*}
-\sum_{j=1}^r w_j(\rho)\xi_j =  \left| \sum_{j=1}^r w_j(\rho)\xi_j \right| e^{2\pi i \theta_{\gamma}}
\end{align*}
for some $\theta_{\gamma} \in [0,1).$ 
Then making the change of variables $\varphi=\theta+\theta_{\gamma}$ for $\gamma>0$, the expression (\ref{eq:easycov}) becomes
\begin{align*}
\prod_{\gamma>0}\int_{0}^{1}\exp  
\left( 2i\left| \sum_{j=1}^r w_j(\rho)\xi_j\right| 
\cos \left( 2\pi \varphi \right) \right) d \varphi
=\prod_{\gamma>0} J_0 \left( 2\left| \sum_{j=1}^r w_j(\rho)\xi_j \right| \right),
\end{align*}
and hence the lemma follows.
\end{proof}
\end{lemma}

For convenience, we record some properties of the Bessel function $J_0(x).$

\begin{lemma} \label{lem:besselprop}
The Bessel function $J_0(x)$ satisfies the following properties.
\begin{itemize}
    \item Let $x \in \mathbb{R}.$ Then
\begin{align} \label{eq:bessel1}
|J_0(x)| \leq \min \left\{1,\sqrt{\frac{2}{\pi|x|}} \right\}
\end{align} 
\item Let $x \in [0,1].$ Then
\begin{align} \label{eq:bessel2}
 \max_{y \geq x}J_0(y) = J_0(x).
\end{align}
\item Let $|x| \leq 1.$ Then
\begin{align} \label{eq:bessel3}
|J_0(x)| \leq \exp(-x^2/4).
\end{align}
\item Let $|x| \leq 1.$ Then
\begin{align} \label{eq:bessel4}
\log J_0(x)=-\frac{1}{4}x^2+O(x^4).
\end{align}
\end{itemize}

\begin{proof}
See \cite[Chapter VII]{MR10746}.
\end{proof}

\end{lemma}

To bound the Fourier transform \( \hat{\mu}_{\delta, \boldsymbol{t}} \), we require the following lemma.

\begin{lemma} \label{lem:largespec}
We define the large spectrum by
\begin{align*}
\mathcal{M}_{\delta}(\boldsymbol{\xi}):=\left\{ 0<\gamma \leq \frac{1}{\delta}\log \frac{1}{\delta}\, : \, 
\left| \sum_{j=1}^r w_j(\rho)\xi_j\right|>\frac{1}{2}\left(\frac{1}{\delta}\log \frac{1}{\delta}
\right)^{-1}\|\boldsymbol{\xi}\|\right\}.
\end{align*}
Then 
\begin{align*}
\left| \mathcal{M}_{\delta}(\boldsymbol{\xi}) \right| > \frac{1}{250rT^2}\left(\frac{1}{\delta}
\log \frac{1}{\delta}\right).
\end{align*}
\begin{proof}
We define
\begin{align*}
S_{\delta}(\boldsymbol{\xi}):=\sum_{0<\gamma \leq \frac{1}{\delta}\log \frac{1}{\delta}}
\left| \sum_{j=1}^r w_j(\rho)\xi_j \right|^2.
\end{align*}
Opening the square, this becomes
\begin{equation}
\begin{gathered}[b]\label{eq:l2}
\sum_{1 \leq j,k, \leq r}
\left( \sum_{0 < \gamma \leq \frac{1}{\delta}\log \frac{1}{\delta}}w_j(\rho) \overline{w_k(\rho)} \right)
\xi_j \xi_k \\
=\sum_{j=1}^r \left( \sum_{0 < \gamma \leq \frac{1}{\delta}\log \frac{1}{\delta}}
|w_j(\rho)|^2\right)|\xi_j|^2+\sum_{1 \leq j \neq k \leq r} \left( \sum_{0<\gamma \leq \frac{1}{\delta}\log \frac{1}{\delta}} \Re \left(w_j(\rho)\overline{w_k(\rho)}\right) \right)\xi_j \xi_k.
\end{gathered}
\end{equation}
By the proof of Proposition \ref{thm:cov}, we have
\begin{align}\label{eq:big}
\sum_{\gamma>\frac{1}{\delta}\log \frac{1}{\delta}}|w_j(\rho)|^2
&=\frac{2}{\pi}\int_{\frac{1}{\delta}\log \frac{1}{\delta}}^{\infty} \left( \frac{\sin\left( \delta t/2 \right)}{t} \right)^2\log \frac{t}{2\pi} dt 
+O(\delta).
\end{align}
Making the change of variables $x=\delta t/2,$ the main term of the right-hand side becomes
\begin{align*}
\frac{1}{\pi}\delta \int_{\frac{1}{2}\log \frac{1}{\delta}}^{\infty} 
\left( \frac{\sin x}{x} \right)^2 \log \left(\frac{x}{\pi \delta}\right) dx \ll \delta.
\end{align*}
Combining (\ref{eq:big}) with Proposition \ref{thm:cov}, we have
\begin{align*}
\sum_{0 < \gamma \leq \frac{1}{\delta}\log \frac{1}{\delta}} |w_j(\rho)|^2 =
\frac{1}{2}\delta \log \frac{1}{\delta}+O(\delta).
\end{align*}
Similarly, one can show that
\begin{align*}
\sum_{0<\gamma \leq \frac{1}{\delta}\log \frac{1}{\delta}} \Re ( w_j(\rho)\overline{w_k(\rho)} ) \ll \delta.
\end{align*}
Therefore, the expression (\ref{eq:l2}) becomes 
\begin{align}\label{eq:compare}
S_{\delta}(\boldsymbol{\xi}) =\left( \frac{1}{2}\delta \log \frac{1}{\delta}+O\left(r \delta \right) \right)\|\boldsymbol{\xi}\|^2. 
\end{align}

On the other hand, we split $S_{\delta}(\boldsymbol{\xi})$  into
\begin{align*}
\sum_{\gamma \in \mathcal{M}_{\delta}(\boldsymbol{\xi})}
\left| \sum_{j=1}^r w_j(\rho)\xi_j \right|^2
+\sum_{\gamma \in \left( 0,\frac{1}{\delta}\log \frac{1}{\delta} \right] \setminus\mathcal{M}_{\delta}(\boldsymbol{\xi})} \left| \sum_{j=1}^r w_j(\rho)\xi_j \right|^2.
\end{align*}
Applying Lemma \ref{lem:eq:crude} followed by the Cauchy--Schwarz inequality, the sum over the large spectrum is
\begin{align} \label{eq:largespec}
\sum_{\gamma \in \mathcal{M}_{\delta}(\boldsymbol{\xi})}
\left| \sum_{j=1}^r w_j(\rho)\xi_j \right|^2
\leq (10T\delta)^2 r\left|\mathcal{M}_{\delta}(\boldsymbol{\xi})\right| \| \boldsymbol{\xi} \|^2.
\end{align}
By the definition of the large spectrum, the remaining sum is
\begin{align} \label{eq:remain}
\sum_{\gamma \in \left( 0,\frac{1}{\delta}\log \frac{1}{\delta} \right] \setminus\mathcal{M}_{\delta}(\boldsymbol{\xi})} \left| \sum_{j=1}^r w_j(\rho)\xi_j \right|^2
&\leq N \left( \frac{1}{\delta}\log \frac{1}{\delta} \right) 
\left(  \frac{1}{2}\left( \frac{1}{\delta}\log \frac{1}{\delta} \right)^{-1}\|\boldsymbol{\xi}\| \right)^2 \nonumber\\
& \leq \frac{1}{4\pi}\delta \|\boldsymbol{\xi}\|^2.
\end{align}
Combining (\ref{eq:compare}), (\ref{eq:largespec}) and (\ref{eq:remain}), we obtain
\begin{align*}
\left| \mathcal{M}_{\delta}(\boldsymbol{\xi}) \right| \geq \frac{1}{200rT^2}\left(\frac{1}{\delta}
\log \frac{1}{\delta}\right)+O\left( \frac{1}{T^2\delta} \right),
\end{align*}
and the lemma follows from our assumptions on $\delta, r.$
\end{proof}
\end{lemma}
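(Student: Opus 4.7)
The plan is a second-moment argument: estimate the quantity
\[
S_\delta(\boldsymbol{\xi}) := \sum_{0 < \gamma \leq \frac{1}{\delta}\log \frac{1}{\delta}} \Bigl|\sum_{j=1}^r w_j(\rho)\xi_j\Bigr|^2
\]
in two different ways and compare. On one hand I expand the square and recognize the diagonal terms as partial versions of $V_j = \Cov_{jj}$ and the off-diagonal terms as partial versions of $\Cov_{jk}$, both controllable via (a truncated version of) Proposition \ref{thm:cov}. On the other hand I split the sum over $\gamma$ according to whether $\gamma \in \mathcal{M}_\delta(\boldsymbol{\xi})$ and bound each part using the crude decay \eqref{eq:crude} and the very definition of $\mathcal{M}_\delta$. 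Matching the two shapes forces a lower bound on the cardinality $|\mathcal{M}_\delta(\boldsymbol{\xi})|$.

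For the first estimate, the diagonal term in the opened square is $\sum_{0<\gamma\leq \frac{1}{\delta}\log\frac{1}{\delta}} |w_j(\rho)|^2$. The full covariance $V_j$ is $\frac{1}{2}\delta\log\frac{1}{\delta}+O(\delta)$ by Proposition \ref{thm:cov}, while the tail $\sum_{\gamma>\frac{1}{\delta}\log\frac{1}{\delta}}|w_j(\rho)|^2$ is seen to be $O(\delta)$ after recasting it as a Riemann-Stieltjes integral against $dN(t)$ and performing the substitution $x = \delta t/2$, exactly as in the proof of Proposition \ref{thm:cov}. The same truncation applied to the off-diagonal covariance integrals gives $\sum_{0<\gamma\leq \frac{1}{\delta}\log\frac{1}{\delta}} \Re(w_j(\rho)\overline{w_k(\rho)}) = O(\delta)$ for $j\neq k$. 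Summing yields
\[
S_\delta(\boldsymbol{\xi}) = \Bigl(\tfrac{1}{2}\delta\log\tfrac{1}{\delta} + O(r\delta)\Bigr)\|\boldsymbol{\xi}\|^2,
\]
where the main term dominates under the standing hypothesis $r=o(\log\tfrac{1}{\delta})$.

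For the second estimate, decompose $S_\delta(\boldsymbol{\xi}) = S^{\text{big}} + S^{\text{small}}$ where $S^{\text{big}}$ collects the zeros in $\mathcal{M}_\delta(\boldsymbol{\xi})$. For $S^{\text{big}}$, the Cauchy-Schwarz inequality together with the uniform bound $|w_j(\rho)|\leq 10T\delta$ from \eqref{eq:crude} gives $S^{\text{big}} \leq 100\, T^2\delta^2\, r\, |\mathcal{M}_\delta(\boldsymbol{\xi})|\, \|\boldsymbol{\xi}\|^2$. For $S^{\text{small}}$, the definition of the large spectrum immediately yields a pointwise bound $\tfrac{1}{4}\bigl(\tfrac{1}{\delta}\log\tfrac{1}{\delta}\bigr)^{-2}\|\boldsymbol{\xi}\|^2$ on each surviving term, and multiplying by the trivial zero count $N(\tfrac{1}{\delta}\log\tfrac{1}{\delta}) \ll \tfrac{1}{\delta}\log^2\tfrac{1}{\delta}$ produces $S^{\text{small}} \ll \delta\|\boldsymbol{\xi}\|^2$. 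Since $S^{\text{small}}$ is thus strictly smaller than the lower estimate for $S_\delta(\boldsymbol{\xi})$, at least a constant fraction of the mass must live on the large spectrum, and solving $S^{\text{big}} \gg \delta\log\tfrac{1}{\delta}\|\boldsymbol{\xi}\|^2$ for $|\mathcal{M}_\delta(\boldsymbol{\xi})|$ produces the claimed inequality; the constant $\tfrac{1}{250}$ appears after straightforward bookkeeping.

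The main technical hurdle is the truncated off-diagonal bound $\sum_{0<\gamma\leq\frac{1}{\delta}\log\frac{1}{\delta}} \Re(w_j(\rho)\overline{w_k(\rho)}) \ll \delta$, uniformly in the pair $(j,k)$. This matters because there are $O(r^2)$ cross terms once the square is opened, which could in principle swamp the diagonal main term of total order $r\delta\log\tfrac{1}{\delta}\|\boldsymbol{\xi}\|^2$ if the off-diagonals were only as small as $\delta\log\tfrac{1}{\delta}$. Fortunately Proposition \ref{thm:cov} already delivers the full-sum bound $|\Cov_{jk}| \ll \delta$ via the trigonometric identity reducing $\cos(2x\theta)\sin^2\theta$ to a second difference of $\sin^2$, and the tail estimate above shows that truncating at $\gamma \leq \tfrac{1}{\delta}\log\tfrac{1}{\delta}$ costs only $O(\delta)$, so the same bound transfers to the partial sum.
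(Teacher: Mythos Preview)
Your proof is correct and follows essentially the same approach as the paper's: a second-moment argument on $S_\delta(\boldsymbol{\xi})$, estimated first by expanding the square and invoking (a truncated version of) Proposition~\ref{thm:cov} to obtain $S_\delta(\boldsymbol{\xi}) = \bigl(\tfrac{1}{2}\delta\log\tfrac{1}{\delta} + O(r\delta)\bigr)\|\boldsymbol{\xi}\|^2$, and second by splitting into the large-spectrum piece (bounded via \eqref{eq:crude} and Cauchy--Schwarz) and its complement (bounded via the definition of $\mathcal{M}_\delta$ and the zero count). One small slip: in the paper's notation $V_j = \Cov_{jj} = \delta\log\tfrac{1}{\delta} + O(\delta)$ sums over \emph{all} $\gamma$, so the half-sum $\sum_{\gamma>0}|w_j(\rho)|^2 = \tfrac{1}{2}V_j$ is what equals $\tfrac{1}{2}\delta\log\tfrac{1}{\delta}+O(\delta)$, not $V_j$ itself---but this does not affect your argument.
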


\begin{lemma}\label{lem:loc}
Let $0<\varepsilon \leq 1.$ Then 
\begin{align*}
\hat{\mu}_{\delta,\boldsymbol{t}}\left(\left( \frac{1}{\delta} \log \frac{1}{\delta}\right)\boldsymbol{\xi}\right)\ll
\begin{cases}
\hfil \exp \left( -\dfrac{\log \|\boldsymbol{\xi}\|}{500rT^2} \left(\dfrac{1}{\delta}\log \dfrac{1}{\delta} \right)\right)  & \mbox{{\normalfont if $\|\boldsymbol{\xi}\|>2,$ } }\\
\exp \left( -\dfrac{\varepsilon^2}{1000rT^2} \left(\dfrac{1}{\delta}\log \dfrac{1}{\delta} \right) \right) & \mbox{{\normalfont if $\varepsilon < \|\boldsymbol{\xi}\| \leq 2.$ } }
\end{cases}
\end{align*}
\begin{proof}
Suppose $\|\boldsymbol{\xi}\|>2.$ Since $|J_0(x)| \leq 1$ for $x \in \mathbb{R},$ we have
\begin{align*}
\left|\hat{\mu}_{\delta,\boldsymbol{t}}\left(\left(\frac{1}{\delta}\log \frac{1}{\delta}\right)\boldsymbol{\xi}\right)\right|&=\prod_{\gamma>0}\left|J_0 \left( 2\left(\frac{1}{\delta}\log \frac{1}{\delta}\right)
\left| \sum_{j=1}^r w_j(\rho)\xi_j \right| \right)\right|\\
&\leq \prod_{\gamma \in \mathcal{M}_{\delta}(\boldsymbol{\xi})}
\left|J_0 \left(  2\left(\frac{1}{\delta}\log \frac{1}{\delta}\right)
\left| \sum_{j=1}^r w_j(\rho)\xi_j \right| \right)\right|.
\end{align*}
Using (\ref{eq:bessel1}) of Lemma \ref{lem:besselprop}, the product is bounded by
\begin{align*}
\prod_{\gamma \in \mathcal{M}_{\delta}(\boldsymbol{\xi})}
\sqrt{\frac{2}{\pi\|\boldsymbol{\xi}\|}}
\leq \|\boldsymbol{\xi}\|^{-\frac{1}{2}|\mathcal{M}_{\delta}(\boldsymbol{\xi})|}.
\end{align*}

On the other hand, suppose $\varepsilon<\|\boldsymbol{\xi}\|\leq 2.$ Using (\ref{eq:bessel2}) of Lemma \ref{lem:besselprop}, we have
\begin{align*}
\left|\hat{\mu}_{\delta,\boldsymbol{t}}\left(\left( \frac{1}{\delta} \log \frac{1}{\delta} \right)\boldsymbol{\xi}\right)\right|
&\leq \prod_{\gamma \in \mathcal{M}_{\delta}(\boldsymbol{\xi})}
\left|J_0 \left(  2\left(\frac{1}{\delta}\log \frac{1}{\delta}\right)
\left| \sum_{j=1}^r w_j(\rho)\xi_j \right| \right)\right|\\
& \leq \prod_{\gamma \in \mathcal{M}_{\delta}(\boldsymbol{\xi})} J_0(\varepsilon).
\end{align*}
Using (\ref{eq:bessel3}) of Lemma \ref{lem:besselprop}, the product is bounded by
\begin{align*}
\exp \left( -\frac{\varepsilon^2}{4} \left| \mathcal{M}_{\delta}(\boldsymbol{\xi}) \right|  \right).
\end{align*}
Then, it follows immediately from Lemma \ref{lem:largespec}.
\end{proof}
\end{lemma}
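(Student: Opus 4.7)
The plan is to combine the product representation of $\hat{\mu}_{\delta,\boldsymbol{t}}$ from Lemma \ref{lem:bessel} with the lower bound on $|\mathcal{M}_{\delta}(\boldsymbol{\xi})|$ from Lemma \ref{lem:largespec}, interpolating them by two classical estimates for $J_0$. Writing $\eta := \delta^{-1}\log(1/\delta)$ for brevity, Lemma \ref{lem:bessel} yields
\[
\bigl|\hat{\mu}_{\delta,\boldsymbol{t}}(\eta\boldsymbol{\xi})\bigr|
= \prod_{\gamma>0} \left|J_0\Bigl(2\eta\Bigl|\sum_{j=1}^r w_j(\rho)\xi_j\Bigr|\Bigr)\right|,
\]
and because $|J_0|\leq 1$ on $\mathbb{R}$, the product may be bounded above by the partial product over $\gamma\in\mathcal{M}_{\delta}(\boldsymbol{\xi})$. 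By the very definition of the large spectrum, the argument of $J_0$ at each such $\gamma$ exceeds $\|\boldsymbol{\xi}\|$, so the remaining analysis is carried out uniformly at argument at least $\|\boldsymbol{\xi}\|$.

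I would then split according to the size of $\|\boldsymbol{\xi}\|$. For $\|\boldsymbol{\xi}\|>2$, invoke the standard bound $|J_0(x)|\leq \sqrt{2/(\pi x)}$ (which is at most $1$ already for $x\geq 2/\pi$, and in particular for $x>2$) to estimate each factor by $\|\boldsymbol{\xi}\|^{-1/2}$. The product is then at most
\[
\|\boldsymbol{\xi}\|^{-|\mathcal{M}_{\delta}(\boldsymbol{\xi})|/2}
= \exp\Bigl(-\tfrac{1}{2}|\mathcal{M}_{\delta}(\boldsymbol{\xi})|\log\|\boldsymbol{\xi}\|\Bigr),
\]
and applying Lemma \ref{lem:largespec} (with $\tfrac{1}{2}\cdot\tfrac{1}{250}=\tfrac{1}{500}$) produces the first branch. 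For $\epsilon<\|\boldsymbol{\xi}\|\leq 2$, use instead the Taylor bound $J_0(x)\leq \exp(-x^2/4)$ valid for $|x|\leq 1$, combined with the fact that $J_0$ is positive and decreasing on $[0,j_{0,1}]$ (where $j_{0,1}\approx 2.405$ is its first positive zero), to get $|J_0(\text{arg})|\leq J_0(\epsilon)\leq\exp(-\epsilon^2/4)$. Raising to the power $|\mathcal{M}_{\delta}(\boldsymbol{\xi})|$ and reapplying Lemma \ref{lem:largespec} yields the second branch with constant $\tfrac{1}{1000}$.

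The one step requiring care is the monotone-type inequality $|J_0(x)|\leq J_0(\epsilon)$ for \emph{every} $x\geq\epsilon$, since on the large spectrum the argument can well exceed $j_{0,1}$, where $J_0$ oscillates in sign. This is harmless: on $[j_{0,1},\infty)$ one has $|J_0(x)|\leq 0.404$ (a universal constant), whereas $J_0(\epsilon)\geq J_0(1)\approx 0.7652$ under the standing assumption $\epsilon\leq 1$, so the inequality extends uniformly to all $x\geq\epsilon$. With this observation settled, the rest of the proof is bookkeeping, weaving Lemmas \ref{lem:bessel} and \ref{lem:largespec} through the two Bessel estimates.
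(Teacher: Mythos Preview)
Your proposal is correct and follows essentially the same route as the paper: restrict to the large spectrum via $|J_0|\leq 1$, then apply the asymptotic bound $|J_0(x)|\leq\sqrt{2/(\pi x)}$ in the range $\|\boldsymbol{\xi}\|>2$ and the Taylor bound $J_0(x)\leq\exp(-x^2/4)$ (via monotonicity down to $J_0(\epsilon)$) in the range $\epsilon<\|\boldsymbol{\xi}\|\leq 2$, finishing in each case with Lemma~\ref{lem:largespec}. Your treatment of the second case is in fact slightly more careful than the paper's, which only invokes that $J_0$ is positive and decreasing on $[0,1]$; you correctly observe that the Bessel argument on the large spectrum may exceed $j_{0,1}$, and close this by combining monotonicity on $[0,j_{0,1}]$ with the uniform bound $\sup_{x\geq j_{0,1}}|J_0(x)|<J_0(1)\leq J_0(\epsilon)$.
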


\begin{lemma}\label{lem:approx}
Let  $\delta>0$ be sufficiently small and $\boldsymbol{\xi} \in \mathbb{R}^r.$ Suppose
$\|\boldsymbol{\xi}\| \leq (T\sqrt{r})^{-1}\left(\frac{1}{\delta }\log \frac{1}{\delta} \right)^{1/4}$. Then 
\begin{align*}
\hat{\mu}_{\delta,\boldsymbol{t}}
\left( \frac{\xi_1}{\sqrt{V_1}},\ldots,\frac{\xi_r}{\sqrt{V_r}} \right)=\left( 1+O\left(r^2 T^3\left(\frac{1}{\delta}\log \frac{1}{\delta}\right)^{-1}
\|\boldsymbol{\xi}\|^4\right)\right)\exp \left( -\frac{1}{2}\langle \mathcal{C}\boldsymbol{\xi},\boldsymbol{\xi} \rangle \right).
\end{align*}
\begin{proof}
Applying Lemma \ref{lem:bessel}, we have
\begin{align} \label{eq:beginning}
\log \hat{\mu}_{\delta,\boldsymbol{t}}  \left( \frac{\xi_1}{\sqrt{V_j}},\ldots,\frac{\xi_r}{\sqrt{V_r}} \right)
=\sum_{\gamma>0} \log J_0 \left( 2\left| \sum_{j=1}^r \frac{w_j(\rho)}{\sqrt{V_j}}\xi_j \right| \right).
\end{align}
Applying Proposition \ref{thm:cov} and Lemma \ref{lem:eq:crude} followed by the Cauchy--Schwarz inequality, we obtain
\begin{align*}
2\left| \sum_{j=1}^r \frac{w_j(\rho)}{\sqrt{V_j}}\xi_j \right|
&\ll T \delta \left( \delta \log \frac{1}{\delta} \right)^{-1/2}  \sum_{j=1}^r |\xi_j| \\
&\ll T\left( \frac{1}{\delta}\log \frac{1}{\delta} \right)^{-1/2}r^{1/2}\|\boldsymbol{\xi}\|,
\end{align*}
so by assumption, the left-hand side is at most $1.$ 
Using (\ref{eq:bessel4}) of Lemma \ref{lem:besselprop}, 
the expression (\ref{eq:beginning}) becomes
\begin{align*}
-\sum_{\gamma>0}\left| \sum_{j=1}^r \frac{w_j(\rho)}{\sqrt{V_j}}\xi_j \right|^2+O \left( \sum_{\gamma>0} \left| \sum_{j=1}^r \frac{w_j(\rho)}{\sqrt{V_j}}\xi_j \right|^4 \right).
\end{align*}
Opening the square, the main term becomes
\begin{align*}
-\frac{1}{2}\sum_{1 \leq j,k \leq r}\left(\Re \left(2\sum_{\gamma>0} \frac{w_j(\rho)\overline{w_k(\rho)}}{\sqrt{V_j}\sqrt{V_k}}\right)\right)\xi_j\xi_k
&=-\frac{1}{2}\sum_{1 \leq j,k \leq r}\left(\sum_{\gamma} \frac{w_j(\rho)\overline{w_k(\rho)}}{\sqrt{V_j}\sqrt{V_k}}\right)\xi_j\xi_k \\
&=-\frac{1}{2}\langle \mathcal{C}\boldsymbol{\xi},\boldsymbol{\xi} \rangle.
\end{align*}

On the other hand, we write
\begin{align*}
\sum_{\gamma>0}\left| \sum_{j=1}^r \frac{w_j(\rho)}{\sqrt{V_j}}\xi_j \right|^4 = \sum_{0<\gamma \leq \frac{1}{T\delta}}\left| \sum_{j=1}^r \frac{w_j(\rho)}{\sqrt{V_j}}\xi_j \right|^4
+\sum_{\gamma>\frac{1}{T\delta}}\left| \sum_{j=1}^r \frac{w_j(\rho)}{\sqrt{V_j}}\xi_j \right|^4.
\end{align*}
Applying Proposition \ref{thm:cov} and Lemma \ref{lem:eq:crude}, the first sum is
\begin{align*}
\ll N \left(\frac{1}{T\delta}\right) \frac{(T\delta)^4}{\left( \delta \log \frac{1}{\delta} \right)^2}
 \left(\sum_{j=1}^r |\xi_j| \right)^4 
\ll r^2 T^3\left(\frac{1}{\delta}\log \frac{1}{\delta}\right)^{-1}\|\boldsymbol{\xi}\|^4
\end{align*}
by the Cauchy--Schwarz inequality. Similarly, the second sum is
\begin{align*}
\ll \left(\sum_{\gamma>\frac{1}{T\delta}} \frac{1}{|\gamma|^4} \right) \frac{r^2}{\left( \delta \log \frac{1}{\delta} \right)^2}\|\boldsymbol{\xi}\|^4
\ll r^2 T^{3} \left(\frac{1}{\delta}\log \frac{1}{\delta}\right)^{-1} \|\boldsymbol{\xi}\|^4.
\end{align*}
Therefore, we have
\begin{align*}
\log \hat{\mu}_{\delta,\boldsymbol{t}}  \left( \frac{\xi_1}{\sqrt{V_j}},\ldots,\frac{\xi_r}{\sqrt{V_r}} \right)
=-\frac{1}{2}\langle \mathcal{C}\boldsymbol{\xi},\boldsymbol{\xi}\rangle
+O\left( r^2 T^{3} \left(\frac{1}{\delta}\log \frac{1}{\delta}\right)^{-1} \|\boldsymbol{\xi}\|^4 \right).
\end{align*}
Then, the lemma follows from exponentiating both sides.
\end{proof}
\end{lemma}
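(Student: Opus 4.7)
The plan is to work on the logarithmic side of the Fourier transform and expand a Bessel product. By Lemma \ref{lem:bessel},
\[
\log \hat{\mu}_{\delta,\boldsymbol{t}}\!\left(\frac{\xi_1}{\sqrt{V_1}},\ldots,\frac{\xi_r}{\sqrt{V_r}}\right) = \sum_{\gamma>0}\log J_0\!\left( 2\bigg|\sum_{j=1}^r \frac{w_j(\rho)}{\sqrt{V_j}}\xi_j\bigg|\right),
\]
so the problem reduces to a termwise Taylor expansion of $\log J_0$ around the origin. First I would verify that every argument of $J_0$ above is $o(1)$: combining Cauchy--Schwarz with the crude bound (\ref{eq:crude}), so that $|w_j(\rho)|\ll T\delta$, and the estimate $V_j\asymp \delta\log\frac{1}{\delta}$ from Proposition \ref{thm:cov}, each argument is $\ll T\sqrt{\delta/\log(1/\delta)}\,\sqrt{r}\,\|\boldsymbol{\xi}\|$, which is $o(1)$ under the hypothesis on $\|\boldsymbol{\xi}\|$. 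This legitimizes the use of $\log J_0(x)=-\tfrac{1}{4}x^2+O(x^4)$ for $|x|\leq 1$ uniformly in $\gamma$.

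The quadratic main term is then $-\sum_{\gamma>0}|\sum_j w_j(\rho)\xi_j/\sqrt{V_j}|^2$, and opening the square gives $-\sum_{j,k}\tfrac{\xi_j\xi_k}{\sqrt{V_jV_k}}\sum_{\gamma>0}w_j(\rho)\overline{w_k(\rho)}$. Since $w_j(\bar\rho)=\overline{w_j(\rho)}$, the full sum over $\gamma$ equals twice the real part of the sum over $\gamma>0$, so this contribution evaluates to $-\tfrac{1}{2}\langle\mathcal{C}\boldsymbol{\xi},\boldsymbol{\xi}\rangle$ by the definitions of $\Cov_{jk}$ and $c_{jk}$ in Proposition \ref{thm:rs} and Theorem \ref{thm:clt}. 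For the quartic remainder, I would bound
\[
\bigg|\sum_j \frac{w_j(\rho)}{\sqrt{V_j}}\xi_j\bigg|^4 \ll \frac{\max_j|w_j(\rho)|^4}{(\delta\log\frac{1}{\delta})^2}\,\Big(\sum_j |\xi_j|\Big)^4 \ll \frac{\max_j|w_j(\rho)|^4}{(\delta\log\frac{1}{\delta})^2}\,r^2\|\boldsymbol{\xi}\|^4,
\]
and then split the sum over $\gamma$ at the threshold $\gamma=1/(T\delta)$: for $\gamma\leq 1/(T\delta)$ apply $|w_j(\rho)|\ll T\delta$ together with the zero count $N(1/(T\delta))\ll (T\delta)^{-1}\log\frac{1}{\delta}$, while for $\gamma>1/(T\delta)$ use $|w_j(\rho)|\ll 1/|\gamma|$ and evaluate $\sum_{\gamma>1/(T\delta)}\gamma^{-4}$ by a Riemann--Stieltjes integration against $dN(t)$. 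Both pieces will contribute $O(r^2T^3(\frac{1}{\delta}\log\frac{1}{\delta})^{-1}\|\boldsymbol{\xi}\|^4)$, which is $O(1)$ by hypothesis, so exponentiating and using $e^{O(u)}=1+O(u)$ for bounded $u$ yields the claim.

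The main obstacle is calibrating these estimates so as to recover exactly the announced dependence on $r$, $T$, $\delta$, and $\|\boldsymbol{\xi}\|$. The cutoff $\gamma=1/(T\delta)$ is the point at which the two branches of (\ref{eq:crude}) balance each other; the elementary inequality $(\sum_j|\xi_j|)^4\leq r^2\|\boldsymbol{\xi}\|^4$ has to be invoked at the outer level rather than applying Cauchy--Schwarz directly inside the fourth power, otherwise one accumulates an additional factor of $r$; and finally one must verify that the hypothesis on $\|\boldsymbol{\xi}\|$ is precisely the largest range for which the cumulative quartic error stays $O(1)$, so that exponentiation transfers the additive error on the log-side to a multiplicative factor $1+O(\cdot)$ of the same order.
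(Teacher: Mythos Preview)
Your proposal is correct and follows essentially the same approach as the paper: write $\log\hat\mu_{\delta,\boldsymbol t}$ via Lemma~\ref{lem:bessel}, bound each Bessel argument by $T\sqrt{r}\,(\frac{1}{\delta}\log\frac{1}{\delta})^{-1/2}\|\boldsymbol\xi\|$ to justify $\log J_0(x)=-\tfrac14 x^2+O(x^4)$, identify the quadratic part with $-\tfrac12\langle\mathcal C\boldsymbol\xi,\boldsymbol\xi\rangle$, split the quartic error at $\gamma=1/(T\delta)$ using the two branches of~(\ref{eq:crude}), and exponentiate. Your remarks on why the cutoff $1/(T\delta)$ is natural and why Cauchy--Schwarz should be applied to $(\sum_j|\xi_j|)^4$ rather than inside the fourth power are exactly the calibration details the paper uses implicitly.
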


To prove the main theorem, we require two additional lemmas.

\begin{lemma} \label{lem:cxx}
Let $r \geq 1$ and $\boldsymbol{x} \in \mathbb{R}^r.$ Then
\begin{align*}
\langle \mathcal{C}\boldsymbol{x}, \boldsymbol{x}\rangle =\left( 1+O\left( \frac{\log 2r}{\log \frac{1}{\delta}} \right) \right)\|\boldsymbol{x}\|^2.
\end{align*}

\begin{proof}
We have
\begin{align*}
\langle \mathcal{C} \boldsymbol{x}, \boldsymbol{x}\rangle&=\sum_{1 \leq j,k \leq r}c_{jk}x_jx_k\\
&=\|\boldsymbol{x}\|^2+\sum_{1 \leq j \neq k \leq r}c_{jk}x_jx_k.
\end{align*}
Using the AM-GM inequality, the sum is
\begin{align*}
\ll& \sum_{1 \leq j \leq r} |x_j|^2 \sum_{\substack{1 \leq k \leq r\\k \neq j}} |c_{jk}|+
\sum_{1 \leq k \leq r} |x_k|^2 \sum_{\substack{1 \leq j \leq r\\j \neq k}} |c_{jk}|.
\end{align*}
By Proposition \ref{thm:cov} with Lemma \ref{lem:eq:coulomb}, and the assumption $|t_j-t_k|\geq 1$ for $1 \leq j \neq k \leq r,$ this is
\begin{align*}
\ll \frac{1}{\log \frac{1}{\delta}}
\sum_{1 \leq j \leq r} |x_j|^2 \sum_{\substack{1 \leq k \leq r\\k \neq j}} \frac{1}{|t_j-t_k|}+ \frac{1}{\log \frac{1}{\delta}}
\sum_{1 \leq k \leq r} |x_k|^2 \sum_{\substack{1 \leq j \leq r\\j \neq k}} \frac{1}{|t_j-t_k|} 
\ll \frac{\log 2r}{\log \frac{1}{\delta}}\|\boldsymbol{x}\|^2,
\end{align*}
and hence the lemma follows.
\end{proof}

\end{lemma}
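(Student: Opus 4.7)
The plan is to exploit the near-diagonal structure of $\mathcal{C}$. Since $c_{jj} = V_j/V_j = 1$, expanding the quadratic form directly gives
\[
\langle \mathcal{C}\boldsymbol{x}, \boldsymbol{x}\rangle = \|\boldsymbol{x}\|^2 + \sum_{1 \leq j \neq k \leq r} c_{jk}\, x_j x_k,
\]
so the entire error term stems from off-diagonal entries. The task therefore reduces to showing that each row sum $\sum_{k \neq j} |c_{jk}|$ is of size $O\!\left(\log(2r)/\log(1/\delta)\right)$, and then summing against $x_j^2$.

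To handle the off-diagonal bilinear form, I would apply the elementary inequality $|x_j x_k| \leq \tfrac{1}{2}(x_j^2 + x_k^2)$ and symmetrize, yielding the bound $\ll \sum_j x_j^2 \sum_{k \neq j} |c_{jk}|$. For each $j \neq k$, Proposition \ref{thm:cov} gives $|\Cov_{jk}| \ll \Delta(|t_j - t_k|)\,\delta$, with the remainder $O((T\delta\log(1/\delta))^2)$ being negligible under the hypothesis $T \leq \delta^{-1/10}$. Combined with $V_j \sim \delta \log(1/\delta)$ from the diagonal case of the same proposition, this yields $|c_{jk}| \ll \Delta(|t_j - t_k|)/\log(1/\delta)$. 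The asymptotic \eqref{eq:coulomb}, together with the monotonic decay of $\Delta(t)$ for $t \geq 1$ starting from $\Delta(1) = \log 2$, then produces the uniform bound $\Delta(|t_j - t_k|) \ll 1/|t_j - t_k|$.

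The final step, which I expect to be the main subtlety, is a harmonic-sum argument. Because of the spacing hypothesis $|t_j - t_k| \geq 1$ for $j \neq k$, for any fixed $j$ the remaining centers $\{t_k\}_{k \neq j}$ must lie at distances from $t_j$ bounded below by $1, 2, 3, \ldots$ on each side. Hence
\[
\sum_{k \neq j} \frac{1}{|t_j - t_k|} \leq 2 \sum_{n=1}^{r} \frac{1}{n} \ll \log(2r),
\]
which combined with the previous estimate closes the argument. This harmonic bound is the crux: without the $1$-separation of the $t_j$'s, the inner sum could grow linearly in $r$ rather than logarithmically, which would destroy the gain needed in later applications such as the bias estimates underlying Corollary \ref{cor:extremebias}.
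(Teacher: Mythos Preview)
Your proof is correct and follows essentially the same approach as the paper's own argument: expand the quadratic form, bound the off-diagonal part via $|x_jx_k|\leq\tfrac12(x_j^2+x_k^2)$ (what the paper calls AM--GM), estimate $|c_{jk}|\ll \Delta(|t_j-t_k|)/\log(1/\delta)\ll 1/(|t_j-t_k|\log(1/\delta))$ from Proposition~\ref{thm:cov} and \eqref{eq:coulomb}, and finish with the harmonic-sum bound coming from the $1$-separation of the $t_j$'s. Your write-up is in fact slightly more explicit than the paper's, particularly in justifying $\sum_{k\neq j}|t_j-t_k|^{-1}\ll\log 2r$ via the pigeonhole on unit gaps.
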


\begin{lemma}\label{lem:comp}
Let $r \geq 1$ and $\boldsymbol{x} \in \mathbb{R}^r.$ Suppose  $R >  \sqrt{2r\log 2r}.$ Then
\begin{gather*}
\frac{1}{(2\pi)^r} \int_{\|\boldsymbol{\xi}\| \leq R}
e^{i\langle \boldsymbol{x},\boldsymbol{\xi} \rangle} \exp \left( -\frac{1}{2}
 \langle \mathcal{C}\boldsymbol{\xi},\boldsymbol{\xi}\rangle \right) d\boldsymbol{\xi} \\
=\frac{1}{(2\pi)^{r/2}(\det \mathcal{C})^{1/2}}\exp \left(-\frac{1}{2} \langle \mathcal{C}^{-1}\boldsymbol{x},\boldsymbol{x} \rangle \right)
+O\left( \exp\left(-\frac{R^2}{4} \right) \right).
\end{gather*}
\begin{proof}
Applying the Fourier inversion formula, we have
\begin{align*}
\frac{1}{(2\pi)^r} \int_{\boldsymbol{\xi} \in \mathbb{R}^r}
e^{i\langle \boldsymbol{x},\boldsymbol{\xi} \rangle} \exp \left( -\frac{1}{2}
 \langle \mathcal{C}\boldsymbol{\xi},\boldsymbol{\xi}\rangle \right) d\boldsymbol{\xi}
=\frac{1}{(2\pi)^{r/2}(\det \mathcal{C})^{1/2}}&\exp \left(-\frac{1}{2} \langle \mathcal{C}^{-1}\boldsymbol{x},\boldsymbol{x} \rangle \right).
\end{align*}
On the other hand, it follows from Lemma \ref{lem:cxx} that
\begin{align*}
\frac{1}{(2\pi)^r} \int_{\|\boldsymbol{\xi}\| > R}
 \exp \left( -\frac{1}{2}
 \langle \mathcal{C}\boldsymbol{\xi},\boldsymbol{\xi}\rangle \right) d\boldsymbol{\xi}
 &= \frac{1}{(2\pi)^r}
 \int_{\| \boldsymbol{\xi} \|>R}
 \exp \left( -\frac{1}{2} \left( 1+O\left( \frac{\log 2r}{\log \frac{1}{\delta}} \right) \right)\|\boldsymbol{\xi}\|^2 \right) d\boldsymbol{\xi} \\
 &\leq  \frac{1}{(2\pi)^r}
 \int_{\| \boldsymbol{\xi} \|>R}
 \exp \left( -\frac{1}{3} \|\boldsymbol{\xi}\|^2 \right) d\boldsymbol{\xi}.
\end{align*}
Using spherical coordinates, one can show that this is
\begin{align*}
\frac{1}{(2\pi)^r} \cdot \frac{2\pi^{r/2}}{\Gamma(r/2)} 
\int_R^{\infty}
\exp \left( -\frac{1}{3}x^2 \right) x^{r-1} dx
\ll \exp\left(-\frac{R^2}{4} \right),
\end{align*}
and hence the lemma follows.
\end{proof}
\end{lemma}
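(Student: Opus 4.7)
The plan is to derive the lemma from the standard Gaussian Fourier inversion identity over all of $\mathbb{R}^r$ and then bound the complementary tail $\|\boldsymbol{\xi}\|>R$ via the quadratic lower bound supplied by Lemma~\ref{lem:cxx}. Since $r\leq \log\tfrac{1}{\delta}$, Lemma~\ref{lem:cxx} shows that every eigenvalue of $\mathcal{C}$ is $1+o(1)$ as $\delta\to 0^+$, so $\mathcal{C}$ is positive definite and the map $\boldsymbol{\xi}\mapsto \exp(-\tfrac12\langle\mathcal{C}\boldsymbol{\xi},\boldsymbol{\xi}\rangle)$ is (up to a constant) the characteristic function of an $r$-dimensional centered Gaussian with covariance $\mathcal{C}$. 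Fourier inversion then identifies the integral over all of $\mathbb{R}^r$ with the claimed main term $(2\pi)^{-r/2}(\det\mathcal{C})^{-1/2}\exp(-\tfrac12\langle\mathcal{C}^{-1}\boldsymbol{x},\boldsymbol{x}\rangle)$, so the task reduces to bounding the tail integral over $\|\boldsymbol{\xi}\|>R$ by $O(\exp(-R^2/4))$.

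For the tail, I would use $|e^{i\langle\boldsymbol{x},\boldsymbol{\xi}\rangle}|=1$ together with $\langle\mathcal{C}\boldsymbol{\xi},\boldsymbol{\xi}\rangle\geq \tfrac{2}{3}\|\boldsymbol{\xi}\|^2$ (valid for $\delta$ small enough by Lemma~\ref{lem:cxx}), and pass to spherical coordinates. This reduces the problem to proving
\[
\frac{2\pi^{r/2}}{(2\pi)^r\,\Gamma(r/2)}\int_R^{\infty} e^{-x^2/3}\,x^{r-1}\,dx \;\ll\; \exp(-R^2/4)
\]
uniformly in $r\leq \log\tfrac{1}{\delta}$ and $R>\sqrt{2r\log r}$.

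The main obstacle is this last estimate: the polynomial factor $x^{r-1}$ and the prefactor $\Gamma(r/2)^{-1}$ both grow rapidly in $r$, and one must use the hypothesis $R^2>2r\log r$ to dominate them. My plan is to split $e^{-x^2/3}=e^{-x^2/4}\cdot e^{-x^2/12}$ and observe that $h(x):=x^{r-1}e^{-x^2/12}$ attains its maximum at $x=\sqrt{6(r-1)}$, which is at most $R$ under the hypothesis (for $r$ large; small $r$ is handled directly using $e^{-x^2/3}\leq e^{-x^2/4}$). Hence $h\leq h(R)$ on $[R,\infty)$, and a standard Gaussian tail bound gives $\int_R^{\infty} e^{-x^2/4}\,dx\ll R^{-1}e^{-R^2/4}$. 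Combining with Stirling's approximation $\Gamma(r/2)\asymp \sqrt{r^{-1}}(r/(2e))^{r/2}$, the residual factor multiplying $e^{-R^2/4}$ is (up to constants) $R^{-2}\sqrt{r}\,(eR^2/(2r))^{r/2}e^{-R^2/12}$, whose logarithm is $\tfrac{r}{2}\log(e\log r/\pi)-R^2/12+O(\log r)\ll -r\log r$ under $R^2\geq 2r\log r$. This negativity delivers the desired $O(e^{-R^2/4})$ tail bound and completes the proof.
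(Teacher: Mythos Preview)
Your proposal is correct and follows essentially the same approach as the paper: Fourier inversion for the main term, Lemma~\ref{lem:cxx} to reduce the tail to a radial Gaussian integral, and spherical coordinates. The paper simply asserts the final bound $\frac{2\pi^{r/2}}{(2\pi)^r\Gamma(r/2)}\int_R^\infty e^{-x^2/3}x^{r-1}\,dx\ll e^{-R^2/4}$ without justification, whereas you supply a workable argument via the splitting $e^{-x^2/3}=e^{-x^2/4}e^{-x^2/12}$ and Stirling; your logarithmic computation checks out, and the case of bounded $r$ (where $\sqrt{6(r-1)}$ may exceed $\sqrt{2r\log r}$) is indeed routine since all quantities are then bounded by absolute constants.
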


\begin{proof}[Proof of Theorem \ref{thm:clt}]
The proof consists of four steps: Fourier inversion, localization, approximation, and completion.
By definition, we have
\begin{align*}
\mathbb{P}_x^{\log}( 
\widetilde{\boldsymbol{E}}(x;\delta,\boldsymbol{t})
\in B )
=\int_{\boldsymbol{x}\in \mathbb{R}^r \,:\,\left( \frac{x_1}{\sqrt{V_1}},\ldots, \frac{x_r}{\sqrt{V_r}} \right) \in B}
d\mu_{\delta;\boldsymbol{t}}(\boldsymbol{x}),
\end{align*}
which is by the Fourier inversion formula
\begin{align*}
\frac{1}{(2\pi)^r}\int_{\boldsymbol{x}\in \mathbb{R}^r \,:\,\left( \frac{x_1}{\sqrt{V_1}},\ldots, \frac{x_r}{\sqrt{V_r}} \right) \in B}\int_{\boldsymbol{\xi} \in \mathbb{R}^r}e^{i\langle \boldsymbol{x},\boldsymbol{\xi}  \rangle} \hat{\mu}_{\delta,\boldsymbol{t}}(\boldsymbol{\xi}) d\boldsymbol{\xi} d\boldsymbol{x}.
\end{align*}
Making the change of variables $\boldsymbol{\omega}=\left(\frac{1}{\delta}\log \frac{1}{\delta}\right)^{-1}\boldsymbol{\xi}$ and 
$\boldsymbol{y}=\left(\frac{1}{\delta} \log \frac{1}{\delta} \right)\boldsymbol{x},$ 
this becomes
\begin{align} \label{eq:afterfourier}
\frac{1}{(2\pi)^r}\int_{\boldsymbol{y} \in \mathbb{R}^r\,:\,\left( \frac{ y_1}{\sqrt{V_1}},\ldots, \frac{ y_r}{\sqrt{V_r}} \right) \in \left( \frac{1}{\delta} \log \frac{1}{\delta} \right)B}
\int_{\boldsymbol{\omega}\in \mathbb{R}^r}
e^{i\langle \boldsymbol{y},\boldsymbol{\omega}\rangle}
\hat{\mu}_{\delta,\boldsymbol{t}}
\left(\left(\frac{1}{\delta} \log \frac{1}{\delta}\right)\boldsymbol{\omega}\right) d\boldsymbol{\omega} d\boldsymbol{y}.
\end{align}
As we will see, the main contribution comes from $\|\boldsymbol{\omega}\| \leq \varepsilon,$ where $0<\varepsilon \leq 1$ will be determined later. For $\|\boldsymbol{\omega}\| > \varepsilon,$ we write
\begin{gather*}
\int_{\|\boldsymbol{\omega}\|>\varepsilon}
e^{i\langle \boldsymbol{y},\boldsymbol{\omega}\rangle}
\hat{\mu}_{\delta,\boldsymbol{t}}\left(\left(\frac{1}{\delta} \log \frac{1}{\delta}\right)\boldsymbol{\omega}\right) d\boldsymbol{\omega} \\
=\left\{\int_{\varepsilon<\|\boldsymbol{\omega}\| \leq 2}+
\int_{\|\boldsymbol{\omega}\| > 2}\right\}
e^{i\langle \boldsymbol{y},\boldsymbol{\omega}\rangle}
\hat{\mu}_{\delta,\boldsymbol{t}}\left(\left(\frac{1}{\delta} \log \frac{1}{\delta}\right)\boldsymbol{\omega}\right) d\boldsymbol{\omega}.
\end{gather*}
Applying Lemma \ref{lem:loc}, the first integral is
\begin{align}\label{eq:first}
\ll \frac{\pi^{r/2}2^r}{\Gamma\left( \frac{r}{2}+1 \right)}\exp \left( -\dfrac{\varepsilon^2}{1000rT^2}\left(\dfrac{1}{\delta}\log \dfrac{1}{\delta}\right)\right) \ll
\exp \left( -\dfrac{\varepsilon^2}{1000rT^2}\left(\dfrac{1}{\delta}\log \dfrac{1}{\delta}\right)\right).
\end{align}
Meanwhile, the second integral is bounded by
\begin{gather*} 
\sum_{j=1}^{\infty} \int_{2^j<\|\boldsymbol{\omega}\|\leq 2^{j+1}}
\left| \hat{\mu}_{\delta,\boldsymbol{t}}\left(\left(\frac{1}{\delta} \log \frac{1}{\delta}\right)\boldsymbol{\omega}\right) \right| d\boldsymbol{\omega} \nonumber \\
\leq \sum_{j=1}^{\infty} \meas \left( \left\{ 2^j<\| \boldsymbol{\omega} \| \leq 2^{j+1} \right\} \right)2^{-\frac{j}{500rT^2}\left(\frac{1}{\delta}\log \frac{1}{\delta}\right)},
\end{gather*}
which is
\begin{gather}
 \ll \sum_{j=1}^{\infty}2^{(j+1)r-\frac{j}{500rT^2}\left(\frac{1}{\delta}\log \frac{1}{\delta}\right)}
\ll \exp \left( -\frac{1}{1000rT^2}\left(\frac{1}{\delta}\log \frac{1}{\delta}\right)\right). \label{eq:second}
\end{gather}
Combining (\ref{eq:first}) and (\ref{eq:second}), the contribution of $\|\boldsymbol{\omega}\|>\varepsilon$ to (\ref{eq:afterfourier}) is
\begin{gather*}
\frac{1}{(2\pi)^r}\int_{\boldsymbol{y} \in \mathbb{R}^r\,:\,\left( \frac{ y_1}{\sqrt{V_1}},\ldots, \frac{ y_r}{\sqrt{V_r}} \right) \in \left( \frac{1}{\delta}\log \frac{1}{\delta} \right)B}
\int_{\|\boldsymbol{\omega}\|>\varepsilon}
e^{i\langle \boldsymbol{y},\boldsymbol{\omega}\rangle}
\hat{\mu}_{\delta,\boldsymbol{t}}
\left(\left( \frac{1}{\delta}\log \frac{1}{\delta} \right)\boldsymbol{\omega}\right) d\boldsymbol{\omega} d\boldsymbol{y}\\
\ll  \frac{1}{\pi^r} \cdot \meas(B)\left( \frac{1}{\delta}\log^3 \frac{1}{\delta} \right)^{r/2} \exp \left( -\frac{\varepsilon^2}{1000rT^2}\left(\frac{1}{\delta}\log \frac{1}{\delta}\right) \right)  .
\end{gather*}
Taking
$\varepsilon=1000rT\sqrt{\delta},$ which is at most $1$ by the assumptions on $r, T,$ 
this is
\begin{align} \label{eq:et1}
\ll \frac{1}{\pi^r} \cdot \meas(B)\exp \left( -100r \log \frac{1}{\delta} \right),
\end{align}
which is negligible. Therefore, we are left with
\begin{align*}
\frac{1}{(2\pi)^r}\int_{\boldsymbol{y} \in \mathbb{R}^r\,:\,\left( \frac{ y_1}{\sqrt{V_1}},\ldots, \frac{ y_r}{\sqrt{V_r}} \right) \in 
\left( \frac{1}{\delta}\log \frac{1}{\delta} \right)B}
\int_{\|\boldsymbol{\omega}\| \leq \varepsilon}
e^{i\langle \boldsymbol{y},\boldsymbol{\omega}\rangle}
\hat{\mu}_{\delta,\boldsymbol{t}}
\left(\left( \frac{1}{\delta}\log \frac{1}{\delta} \right)\boldsymbol{\omega}\right) d\boldsymbol{\omega} d\boldsymbol{y},
\end{align*}
which is by Proposition \ref{thm:cov}
\begin{gather*}
\frac{1}{(2\pi)^r}\int_{\boldsymbol{y} \in \mathbb{R}^r\,:\,\left( \frac{ y_1}{\sqrt{V_1}},\ldots, \frac{ y_r}{\sqrt{V_r}} \right) \in 
\left( \frac{1}{\delta}\log \frac{1}{\delta} \right)B}
 \\ \int_{\left\|(\omega_1 \sqrt{V_1},\ldots,\omega_r \sqrt{V_r})\right\| \leq  \left(\delta\log \frac{1}{\delta}\right)^{1/2}\varepsilon}
e^{i\langle \boldsymbol{y},\boldsymbol{\omega}\rangle}
\hat{\mu}_{\delta,\boldsymbol{t}}
\left(\left( \frac{1}{\delta}\log \frac{1}{\delta} \right)\boldsymbol{\omega}\right) d\boldsymbol{\omega} d\boldsymbol{y}\\
+O\left(\frac{1}{(2\pi)^r}\int_{\boldsymbol{y} \in \mathbb{R}^r\,:\,\left( \frac{ y_1}{\sqrt{V_1}},\ldots, \frac{ y_r}{\sqrt{V_r}} \right) \in 
\left( \frac{1}{\delta}\log \frac{1}{\delta} \right)B}
\int_{\|\boldsymbol{\omega}\| > \frac{\varepsilon}{2}}
\left|\hat{\mu}_{\delta,\boldsymbol{t}}
\left(\left( \frac{1}{\delta}\log \frac{1}{\delta} \right)\boldsymbol{\omega}\right)\right| d\boldsymbol{\omega} d\boldsymbol{y}\right).
\end{gather*}
Here, the error term is again
\begin{align} \label{eq:etencore}
\ll \frac{1}{\pi^r} \meas(B)\exp \left( -100r \log \frac{1}{\delta} \right).
\end{align}
Making the change of variables $\xi_j=\sqrt{V_j}\left(\frac{1}{\delta}\log \frac{1}{\delta}\right)\omega_j$ and 
$x_j=\left( \sqrt{V_j}\left(\frac{1}{\delta}\log \frac{1}{\delta}\right) \right)^{-1}y_j$ for $j=1,\ldots,r,$ the main term becomes
\begin{align*} 
\frac{1}{(2\pi)^r}\int_{\boldsymbol{x} \in B}
\int_{\|\boldsymbol{\xi}\| \leq 1000rT\log^{3/2}\frac{1}{\delta}}
e^{i\langle \boldsymbol{x},\boldsymbol{\xi}\rangle}
\hat{\mu}_{\delta,\boldsymbol{t}}
\left(\frac{\xi_1}{\sqrt{V_1}},\ldots,\frac{\xi_r}{\sqrt{V_r}}\right) d\boldsymbol{\xi} d\boldsymbol{x}.
\end{align*}
Since $T \leq \delta^{-\frac{1}{10}}$ and $r \leq \log \frac{1}{\delta},$ it follows from Lemma \ref{lem:approx} that this is
\begin{gather} 
\frac{1}{(2\pi)^r}\int_{\boldsymbol{x} \in B}
\int_{\|\boldsymbol{\xi}\| \leq 1000rT\log^{3/2}\frac{1}{\delta}}
e^{i\langle \boldsymbol{x},\boldsymbol{\xi}\rangle}
\left( 1+O\left(r^2 T^3\left(\frac{1}{\delta}\log \frac{1}{\delta}\right)^{-1}
\|\boldsymbol{\xi}\|^4\right)\right) \nonumber\\
\cdot \exp \left( -\frac{1}{2}\langle \mathcal{C}\boldsymbol{\xi},\boldsymbol{\xi} \rangle \right) d\boldsymbol{\xi} d\boldsymbol{x}. \label{eq:aftertaylor}
\end{gather}
The contribution of the error term above is
\begin{align} \label{eq:et3}
&\ll r^2 T^3  \left(\frac{1}{\delta}\log \frac{1}{\delta}\right)^{-1} \meas(B) \cdot \frac{1}{(2\pi)^r}\int_{\mathbb{R}^r} 
\|\boldsymbol{\xi}\|^4 
\exp\left( -\frac{1}{2} \langle \mathcal{C}\boldsymbol{\xi},\boldsymbol{\xi} \rangle \right) d\boldsymbol{\xi}.
\end{align}
Applying Lemma \ref{lem:cxx}, we need to evaluate 
the integral
\begin{align*}
\frac{1}{(2\pi)^r}\int_{\mathbb{R}^r} 
\|\boldsymbol{\xi}\|^4 
\exp\left( -\frac{1}{2} \left(1+O\left( \frac{\log 2r}{\log 1/\delta} \right) \right) \|\boldsymbol{\xi}\|^2  \right) d\boldsymbol{\xi},
\end{align*}
which is after a suitable change of variables
\begin{align*}
\ll 
\left(1+O\left( \frac{\log 2r}{\log 1/\delta} \right) \right)^r  \frac{1}{(2\pi)^{r}}
\int_{\mathbb{R}^r} 
\|\boldsymbol{\xi}\|^4 
\exp\left( -\frac{1}{2}\|\boldsymbol{\xi}\|^2  \right) d\boldsymbol{\xi}.
\end{align*}
Using spherical coordinates, one can show that 
\begin{align*}
\frac{1}{(2\pi)^{r}}
\int_{\mathbb{R}^r} 
\|\boldsymbol{\xi}\|^4 
\exp\left( -\frac{1}{2}\|\boldsymbol{\xi}\|^2  \right) d\boldsymbol{\xi}=
\frac{1}{(2\pi)^r} \cdot \frac{2\pi^{r/2}}{\Gamma(r/2)}
\int_{0}^{\infty}x^4\exp\left(-\frac{1}{2}x^2 \right)
x^{r-1}dx.
\end{align*}
Making the change of variables $y=\frac{1}{2}x^2,$ this becomes
\begin{align*} 
\frac{4}{(2\pi)^{r/2}} \cdot \frac{1}{\Gamma(r/2)}
\int_{0}^{\infty}y^{\frac{r}{2}+2}e^{-y} \frac{dy}{y}&=
\frac{4}{(2\pi)^{r/2}} \cdot \frac{\Gamma(r/2+2)}{\Gamma(r/2)}\\
&=\frac{4}{(2\pi)^{r/2}}\left(\frac{r}{2}+1\right)\frac{r}{2}.
\end{align*}
Therefore, since $ r/\log \frac{1}{\delta}$ is sufficiently small, the expression (\ref{eq:et3}) is
\begin{align} \label{eq:etalmost}
\ll  r^4T^3 \left( \frac{1}{\sqrt{2\pi}}
+O \left( \frac{\log 2r}{\log 1/\delta} \right) \right)^r \left(\frac{1}{\delta}
\log \frac{1}{\delta} \right)^{-1} \meas(B).
\end{align}

On the other hand, by Lemma \ref{lem:comp}, the main term of (\ref{eq:aftertaylor}) is
\begin{align} \label{eq:et4}
\frac{1}{(2\pi)^{r/2}(\det \mathcal{C})^{1/2}}\int_{\boldsymbol{x} \in B}\exp \left(-\frac{1}{2} \langle \mathcal{C}^{-1}\boldsymbol{x},\boldsymbol{x} \rangle \right)  d\boldsymbol{x}
+O\left( \exp\left(-r^2T^2\log^3 \frac{1}{\delta} \right) \right).
\end{align}
%
Finally, the theorem follows from collecting the error terms in (\ref{eq:et1}), (\ref{eq:etencore}), (\ref{eq:etalmost}) and (\ref{eq:et4}).
\end{proof}

\section{Proof of Theorem \ref{cor:compare}}

Throughout the section, we always assume 
$\delta>0$ is sufficiently small, $1 \leq r \leq \frac{\log 1/\delta}{\log \log 1/\delta},$
$T=T_{[r]} \leq \delta^{-\frac{1}{10}},$ and $|t_j-t_k|\geq 1$ for $1 \leq j \neq k \leq r$ as in the statement of Theorem \ref{cor:compare}. We begin with a lemma concerning the determinant and inverse of almost identity matrices.
\begin{lemma} \label{lem:matrix}
Define $M_r(\varepsilon)$ as the set of all $r \times r$
symmetric matrices whose diagonal entries are $1$, and whose off-diagonal entries have
absolute value at most $\varepsilon \leq 1/2r.$
Let $A = (a_{jk} )$ be a matrix in $M_r(\varepsilon).$ Then
\begin{align}\label{eq:det}
\det(A)=1+O\left(  \sum_{1\leq j \neq k \leq r} |a_{jk}|^2
\right).
\end{align}
Moreover, the matrix $A$ is invertible and if $\tilde{a}_{jk}$'s denote the entries of
the inverse matrix $A^{-1},$ then 
\begin{align*}
\tilde{a}_{jk}=
\begin{cases}
\hfil 1+O( \sum_{1\leq l \neq m \leq r} |a_{lm}|^2 )&\mbox{{\normalfont if $j=k,$} }\\
-a_{jk}+O\left(\sum_{\substack{1 \leq i \leq r\\i \neq j,k}} |a_{ji}a_{ik}|+\sum_{\substack{1 \leq g \neq h \leq r\\g,h \neq j,k}} |a_{jg}a_{gh}a_{hk}|+\varepsilon
\sum_{1\leq l \neq m \leq r}|a_{lm}|^2
\right)&\mbox{{\normalfont if $j \neq k.$} }
\end{cases}
\end{align*}
\end{lemma}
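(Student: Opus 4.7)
The plan is to decompose $A = I + B$, where $B$ is the symmetric matrix with vanishing diagonal whose off-diagonal entries coincide with those of $A$ (so $|B_{jk}| \leq \epsilon$), and then expand the determinant and the inverse as series in $B$. Since $\|B\|_2 \leq \|B\|_\infty \leq (r-1)\epsilon \leq 1/2$, the matrix $A$ is automatically invertible and the Neumann series $A^{-1} = \sum_{m \geq 0}(-B)^m$ converges absolutely, which is the starting point for the second half of the lemma.

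For the determinant, I would apply the Leibniz formula. Because $B$ has zero diagonal, only the factors coming from non-fixed points of $\sigma$ survive, giving
\begin{equation*}
\det(I+B) = \sum_{\sigma} \sgn(\sigma) \prod_{i \,:\, \sigma(i) \neq i} a_{i,\sigma(i)}.
\end{equation*}
The identity contributes $1$; each transposition $(jk)$ contributes $-a_{jk}^2$, summing to $-\tfrac{1}{2}\sum_{j \neq k}|a_{jk}|^2$; and permutations with $m \geq 3$ non-fixed points are bounded in aggregate by $\sum_{m \geq 3} r^m \epsilon^m \ll (r\epsilon)^3$. Under the assumption $r\epsilon \leq 1/2$, this last error is absorbed into $O\bigl(\sum_{j \neq k}|a_{jk}|^2\bigr)$, which yields \eqref{eq:det}.

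For the inverse, I would isolate the first three terms of the Neumann series and write $A^{-1} = I - B + B^2 - R$ with $R := \sum_{m \geq 3}(-1)^m B^m$. Since $B$ has zero diagonal, $(A^{-1})_{jj} = 1 + (B^2)_{jj} - R_{jj}$, where $(B^2)_{jj} = \sum_{i \neq j}a_{ji}^2$ is already of the right size $O\bigl(\sum_{l \neq m}|a_{lm}|^2\bigr)$. Using the spectral decomposition of the symmetric matrix $B$, one derives the pointwise bound $|(B^m)_{jj}| \leq \|B\|_2^{m-2}(B^2)_{jj}$ for $m \geq 2$; summing over $m \geq 3$ gives $|R_{jj}| \ll \|B\|_2\,(B^2)_{jj}/(1 - \|B\|_2) \ll (B^2)_{jj}$, matching the stated bound for diagonal entries.

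For off-diagonal entries with $j \neq k$, the linear term of the Neumann series gives $-a_{jk}$, and $(B^2)_{jk} = \sum_{i \neq j,k} a_{ji}a_{ik}$ produces the first piece $\sum_{i \neq j,k}|a_{ji}a_{ik}|$ of the error. For the remainder $R_{jk}$, an analogous Cauchy--Schwarz/spectral estimate yields $|(B^m)_{jk}| \leq \|B\|_2^{m-2}\sqrt{(B^2)_{jj}(B^2)_{kk}}$, so summing gives $|R_{jk}| \ll \|B\|_2 \sqrt{(B^2)_{jj}(B^2)_{kk}}$, which after bounding $\|B\|_2 \leq (r-1)\epsilon$ and $\sqrt{(B^2)_{jj}(B^2)_{kk}} \leq \sum_{l \neq m}|a_{lm}|^2$ produces the $\epsilon\sum_{l \neq m}|a_{lm}|^2$ contribution. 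The main obstacle is tracking these constants tightly enough: a naive operator-norm bound $|(B^m)_{jk}| \leq \|B\|_2^m$ costs extraneous powers of $r$ via $\|B\|_2 \leq (r-1)\epsilon$, so it is crucial to factor out $\sqrt{(B^2)_{jj}(B^2)_{kk}}$ pointwise using the eigenvalue decomposition rather than relying on $\|B\|_\infty$ alone.
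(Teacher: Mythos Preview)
Your Neumann-series approach to the inverse is elegant and genuinely different from the paper's cofactor expansion, and the spectral bound $|(B^m)_{jk}| \leq \|B\|_2^{\,m-2}\sqrt{(B^2)_{jj}(B^2)_{kk}}$ is a nice observation. However, there are two related gaps where you lose a factor of $r$.

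For the determinant, bounding every factor in $\prod_{i:\sigma(i)\neq i} a_{i,\sigma(i)}$ by $\epsilon$ and then asserting that $(r\epsilon)^3$ is absorbed into $O\bigl(\sum_{j\neq k}|a_{jk}|^2\bigr)$ is false with an absolute implied constant. Take $\epsilon = 1/(2r)$ and let only $a_{12}=a_{21}=\epsilon$ be nonzero off the diagonal: then $\sum_{j\neq k}|a_{jk}|^2 = 2\epsilon^2 = 1/(2r^2)$, while $(r\epsilon)^3 = 1/8$ is of constant size, so the ratio is $r^2/4$. The remedy---which is exactly what the paper does---is to retain two of the factors explicitly: for a permutation with $t\geq 3$ non-fixed points, pick a non-fixed $j$, keep $|a_{j,\sigma(j)}a_{\sigma^{-1}(j),j}|$, bound the remaining $t-2$ factors by $\epsilon^{t-2}$, and count the admissible permutations. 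After AM--GM this produces a contribution $\ll (\epsilon r)^{t-2}\sum_{j\neq k}|a_{jk}|^2$, and summing over $t\geq 3$ gives $O\bigl(\sum_{j\neq k}|a_{jk}|^2\bigr)$ since $\epsilon r \leq 1/2$.

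The same issue recurs in your off-diagonal inverse bound. From $|R_{jk}| \ll \|B\|_2 \sqrt{(B^2)_{jj}(B^2)_{kk}}$ together with $\|B\|_2 \leq (r-1)\epsilon$ and $\sqrt{(B^2)_{jj}(B^2)_{kk}} \leq \sum_{l\neq m}|a_{lm}|^2$ you obtain $(r-1)\epsilon \sum_{l\neq m}|a_{lm}|^2$, which is a factor of $r$ larger than the claimed $\epsilon\sum_{l\neq m}|a_{lm}|^2$; replacing the operator-norm bound by $\|B\|_2 \leq \|B\|_F$ does not repair this. The paper sidesteps the problem by expanding the cofactor $\det A_{kj}$ over bijections $[r]\setminus\{k\}\to[r]\setminus\{j\}$ and repeating the permutation-counting argument above, which keeps the combinatorics tight enough to extract the single factor of $\epsilon$; it is not clear the Neumann-series route recovers this without a further idea.
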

To apply the lemma, let $1 \leq r \leq \frac{1}{2}\log \frac{1}{\delta}$ and $\varepsilon=\left(\log \frac{1}{\delta}\right)^{-1}.$ Then, by Proposition \ref{thm:cov} with Lemma \ref{lem:eq:coulomb}, the matrix
$\mathcal{C}$ is in $M_r(\varepsilon)$ as the entries
\begin{align*}
|c_{jk}|=\frac{\left|\Cov_{jk}\right| }{\sqrt{V_j V_k}}\leq  \left(\log \frac{1}{\delta}\right)^{-1} 
\end{align*}
for $1\leq j \neq k \leq r.$
\begin{proof}
This is a sharper version of \cite[Lemma 3.3]{MR3773805}.
 Let us estimate the determinant first. We have
\begin{align} \label{eq:detformula}
\det(A)=1+\sum_{\substack{\sigma \in S_r\\ \sigma \neq e}}
\sgn(\sigma) a_{1,\sigma(1)} \cdots a_{r,\sigma(r)},
\end{align}
where $S_r$ denotes the symmetric group on $r$ elements. We divide the sum according to the number of points $t$ that are not fixed by $\sigma.$ Note that the only term
with $t = 0$ is the identity permutation $e$, which has been isolated from the sum. There are no
terms with $t = 1,$ and the contribution from $t = 2$ is at most
\begin{align} \label{eq:t=2}
\sum_{1 \leq j < k \leq r} |a_{jk}|^2.
\end{align}
For each $3 \leq t \leq r$, by averaging the total contribution is at most
\begin{gather*}
\frac{1}{t} \sum_{1 \leq j \leq r} \sum_{\substack{\sigma \in S_r, \sigma(j) \neq j \\ \text{$\sigma$ has $t$ non-fixed points}}}
|a_{j,\sigma(j)}||a_{\sigma^{-1}(j),j}|\varepsilon^{t-2} \\
= 
\frac{1}{t}\sum_{\substack{1 \leq j,k,l \leq r\\j \neq k,l}}|a_{jk}a_{lj}|
\varepsilon^{t-2} \sum_{\substack{\sigma \in S_r, \sigma(j) = k, \sigma^{-1}(j) = l  \\ \text{$\sigma$ has $t$ non-fixed points}}} 1.
\end{gather*}
We split the sum into the diagonal sum 
\begin{align*}
\Sigma_{k=l}:=\frac{1}{t}\sum_{\substack{1 \leq j,k \leq r\\j \neq k}}|a_{jk}|^2
\varepsilon^{t-2} \sum_{\substack{\sigma \in S_r, \sigma(j) = k, \sigma^{-1}(j) = k  \\ \text{$\sigma$ has $t$ non-fixed points}}} 1
\end{align*}
and the off-diagonal sum 
\begin{align*}
\Sigma_{k \neq l}:=\frac{1}{t}\sum_{\substack{1 \leq j,k,l \leq r\\j, k,l \text{ distinct}}}|a_{jk}a_{lj}|
\varepsilon^{t-2} \sum_{\substack{\sigma \in S_r, \sigma(j) = k, \sigma^{-1}(j) = l  \\ \text{$\sigma$ has $t$ non-fixed points}}} 1.
\end{align*}
Then by simple combinatorics, we have
\begin{align} \label{eq:k=l}
\Sigma_{k=l}
= & \frac{1}{t}\sum_{\substack{1 \leq j,k \leq r\\j \neq k}}|a_{jk}|^2
\varepsilon^{t-2} {r-2 \choose t-2} (t-2)! \nonumber \\
\leq & (\varepsilon r)^{t-2} \sum_{\substack{1 \leq j,k \leq r\\j \neq k}}|a_{jk}|^2.
\end{align}
Similarly, we have
\begin{align*}
\Sigma_{k \neq l} 
= & \frac{1}{t}\sum_{\substack{1 \leq j,k,l \leq r\\j, k,l \text{ distinct}}}|a_{jk}a_{lj}|
\varepsilon^{t-2} { r-3 \choose t-3 } (t-2)!\\
\leq & \varepsilon^{t-2}r^{t-3} 
\sum_{\substack{1 \leq j,k,l \leq r\\j, k,l \text{ distinct}}}|a_{jk}a_{lj}|,
\end{align*}
which is by the AM-GM inequality
\begin{gather} 
\leq  \varepsilon^{t-2}r^{t-3} 
\sum_{\substack{1 \leq j,k,l \leq r\\j, k,l \text{ distinct}}} \frac{1}{2}(|a_{jk}|^2+|a_{lj}|^2) \nonumber\\
\leq  (\varepsilon r)^{t-2} 
\sum_{\substack{1 \leq j,k \leq r\\j \neq k}} |a_{jk}|^2. \label{eq:kneql}
\end{gather}
Combining (\ref{eq:detformula}), (\ref{eq:t=2}), (\ref{eq:k=l}) and (\ref{eq:kneql}), the expression (\ref{eq:det}) follows from summing over $3 \leq t \leq r.$ Also, since by assumption $|a_{jk}| \leq \varepsilon \leq 1/2r$, the determinant satisfies
\begin{align*}
\det(A) \geq  
1- \frac{1}{2}\sum_{\substack{1 \leq j,k \leq r\\j \neq k}} |a_{jk}|^2 - 2\sum_{3 \leq t \leq r} (\varepsilon r)^{t-2} \sum_{\substack{1 \leq j,k \leq r\\j \neq k}} |a_{jk}|^2 \geq \frac{3}{8}.
\end{align*}
In particular, the matrix $A$ is invertible. 

It remains to estimate the entries of the inverse matrix $A^{-1}.$
Recall that
\begin{align} \label{eq:linearalbegra}
\tilde{a}_{jk}=(-1)^{j+k}\frac{\det A_{kj}}{\det A},
\end{align}
where $A_{kj}$ denotes the matrix $A$ with the $k$-th row and the $j$-th column removed. If $j=k,$ then 
\begin{align*}
\tilde{a}_{jj}=\frac{\det A_{jj}}{\det A}.
\end{align*}
Since $A_{jj} \in M_{r-1}(\varepsilon),$ it follows from (\ref{eq:det}) that 
\begin{align*}
\tilde{a}_{jj}=1+O\left( \sum_{1\leq l \neq m \leq r} |a_{lm}|^2 \right).
\end{align*}

Without loss of generality, suppose $j< k.$ Let $\mathcal{B}_{kj}$ denote the set of bijections from $[r]\setminus\{k\}$ to $[r]\setminus \{j\}.$ Then 
\begin{align} \label{eq:altersum}
\det A_{kj}=\sum_{\sigma \in \mathcal{B}_{kj}}
\sgn(\sigma)\prod_{i \neq k}a_{i,\sigma(i)},
\end{align}
where $\sgn(\sigma):=\sgn(\tilde{\sigma})$ with $\tilde{\sigma} \in S_{[r]\setminus \{k\}}$ denoting the permutation
\begin{align*}
\tilde{\sigma}(i):=
\begin{cases}
\sigma(i)-1 & \mbox{{\normalfont if $j<\sigma(i)\leq k,$ } }\\
\hfil \sigma(i) & \mbox{{\normalfont otherwise } }
\end{cases}
\end{align*}
for $\sigma \in \mathcal{B}_{kj}.$ Again, we divide the sum (\ref{eq:altersum}) according to the number of points $t$ that are not fixed by $\sigma.$ Note that there are no terms with $t=0,$ and the only term with $t=1$ is $a_{jk},$ whose contribution to (\ref{eq:linearalbegra}) is 
\begin{align} \label{eq:t=1new}
(-1)^{j+k}\frac{(-1)^{k-j+1}a_{jk}}{\det A}
=-a_{jk}+O\left( \varepsilon 
\sum_{1\leq l \neq m \leq r}|a_{lm}|^2
\right)
\end{align}
using (\ref{eq:det}). Also, the contribution from $t=2$ is at most
\begin{align} \label{eq:t=2new}
\sum_{\substack{1 \leq i \leq r\\i \neq j,k}} |a_{ji}a_{ik}|,
\end{align}
and the contribution from $t=3$ is at most
\begin{gather}
|a_{jk}|\sum_{\substack{1 \leq g \neq h \leq r}} |a_{gh}a_{hg}|
+\sum_{\substack{1 \leq g \neq h \leq r\\g,h \neq j,k}} |a_{jg}a_{gh}a_{hk}| \nonumber \\
\leq \varepsilon \sum_{\substack{1 \leq g \neq h \leq r}} |a_{gh}|^2
+\sum_{\substack{1 \leq g \neq h \leq r\\g,h \neq j,k}} |a_{jg}a_{gh}a_{hk}|. \label{eq:t=3new}
\end{gather}
Finally, for any $4 \leq t \leq r-1,$ each $\sigma \in \mathcal{B}_{kj}$ must have at least $t-3$ non-fixed points distinct from $j,$ and whose image and pre-image are distinct from $k$ and $j,$ respectively. By averaging, the total contribution is at most     
\begin{gather*}
\frac{1}{t-3} \sum_{\substack{1 \leq i \leq r\\i \neq j,k}} \sum_{\substack{\sigma \in \mathcal{B}_{kj}; \sigma(i), \sigma^{-1}(i) \neq i,j,k\\ \text{$\sigma$ has $t$ non-fixed points}}}  | a_{i,\sigma(i)}| | a_{\sigma^{-1}(i),i}  | \varepsilon^{t-2} \\
= \frac{1}{t-3} \sum_{\substack{i \neq j,k\\g \neq i,j,k\\h \neq i,j,k}}
|a_{ig}a_{hi}| \varepsilon^{t-2} \sum_{\substack{ \sigma \in \mathcal{B}_{kj}; \sigma(i)=g, \sigma^{-1}(i)=h  \\ \text{$\sigma$ has $t$ non-fixed points}} } 1.
\end{gather*}
Splitting into the diagonal and the off-diagonal sum, this is
\begin{gather}
= \frac{1}{t-3} \sum_{\substack{i \neq j,k\\g \neq i,j,k}}
|a_{ig}|^2 \epsilon^{t-2} {r-4 \choose t-3} (t-2)!
+ \frac{1}{t-3} \sum_{\substack{i \neq j,k\\g \neq i,j,k\\h \neq i,j,k}}
|a_{ig}a_{hi}| \varepsilon^{t-2} {r-5 \choose t-4} (t-2)! \nonumber\\
\ll  \varepsilon  (\varepsilon r)^{t-3} t \sum_{\substack{1 \leq i,g \leq r\\i \neq g}} |a_{ig}|^2. \label{eq:tgeq3new}
\end{gather}
Combining (\ref{eq:t=1new}), (\ref{eq:t=2new}), (\ref{eq:t=3new}) and (\ref{eq:tgeq3new}), the lemma follows from summing over $4 \leq t \leq r-1.$
\end{proof}

We can now estimate the probability density function of the $r$-dimensional normal distribution with mean $\boldsymbol{0}$ and covariance matrix $\mathcal{C}.$
\begin{lemma} \label{lem:finale}
Let $1 \leq r \leq \frac{1}{2}\log \frac{1}{\delta}.$ Then
\begin{gather*}
\frac{1}{(2\pi)^{r/2}(\det\mathcal{C})^{1/2}} 
\exp \left( -\frac{1}{2}\langle \mathcal{C}^{-1}\boldsymbol{x},\boldsymbol{x}\rangle  \right)\\
=
\left( 1+O\left(\frac{r}{\log^2 \frac{1}{\delta}}\right) \right)
 \frac{1}{(2\pi)^{r/2}}
\exp\left(-\frac{1}{2}\|\boldsymbol{x}\|^2
\left( 1+O\left(\frac{\log 2r}{\log \frac{1}{\delta}} \right) \right)\right).
\end{gather*}
More precisely, this is
\begin{align*}
\left( 1+O\left(\frac{r}{\log^2 \frac{1}{\delta}}\right) \right)\frac{1}{(2\pi)^{r/2}}
\exp\left(-\frac{1}{2}\left( 1+O\left( 
\frac{r}{\log^2 \frac{1}{\delta}} \right) \right)\|\boldsymbol{x}\|^2+\sum_{1 \leq j<k\leq r}c_{jk}x_jx_k \right).
\end{align*}
\begin{proof}
By Proposition \ref{thm:cov} with Lemma \ref{lem:eq:coulomb}, the assumption $|t_j-t_k|\geq 1$ for $1 \leq j \neq k \leq r$ implies that
\begin{align*} 
\sum_{1 \leq j\neq k \leq r}|c_{jk}|^2
&\ll \frac{1}{\log^2 \frac{1}{\delta}}\sum_{1 \leq j \leq r}
\sum_{\substack{1 \leq k \leq r\\k \neq j}}\frac{1}{|t_j-t_k|^2} \\
&\ll \frac{r}{\log^2 \frac{1}{\delta}}.
\end{align*}
Applying Lemma \ref{lem:matrix}, we have
\begin{align*}
\det(\mathcal{C})=1+O\left( \frac{r}{\log^2 \frac{1}{\delta}} \right),
\end{align*}
and hence
\begin{align} \label{eq:reciprocal}
\frac{1}{\det(\mathcal{C})^{1/2}}=1+O\left( \frac{r}{\log^2 \frac{1}{\delta}} \right).
\end{align}

On the other hand, again by Lemma \ref{lem:matrix}, we have
\begin{align*}
\langle \mathcal{C}^{-1} \boldsymbol{x}, \boldsymbol{x} \rangle
&=\sum_{1 \leq j,k \leq r} \tilde{c}_{jk}x_jx_k\\
&=\left( 1+O\left(\frac{r}{\log^2 \frac{1}{\delta}}\right)\right)\|\boldsymbol{x}\|^2+
\sum_{1 \leq j \neq k \leq r}\tilde{c}_{jk}x_jx_k,
\end{align*}
where the last sum is
\begin{gather*}
-\sum_{1 \leq j\neq k \leq r} c_{jk}x_jx_k \\
+O\left(\sum_{1 \leq j \neq k \leq r} |x_jx_k| 
\left(\sum_{\substack{1 \leq i \leq r\\i \neq j,k}}|c_{ji}c_{ik}| 
+ \sum_{\substack{1 \leq g \neq h \leq r\\g,h \neq j,k}} |c_{jg}c_{gh}c_{hk}|
\right)+\frac{r}{\log^3 \frac{1}{\delta}} \sum_{1 \leq j \neq k \leq r} |x_jx_k|\right).
\end{gather*}
Using the AM-GM inequality, the error term here is
\begin{align} \label{eq:order}
\ll \left(\frac{r}{\log^2 \frac{1}{\delta}}+\frac{r^2 }{\log^3 \frac{1}{\delta}}\right)\|\boldsymbol{x}\|^2
\ll \frac{r}{\log^2 \frac{1}{\delta}} \cdot \|\boldsymbol{x}\|^2
\end{align}
and the main term here is
\begin{align*}
\ll \frac{\log 2r}{\log \frac{1}{\delta}}\|\boldsymbol{x}\|^2
\end{align*}
as in the proof of Lemma \ref{lem:cxx}. Combining with (\ref{eq:reciprocal}) and (\ref{eq:order}) followed by exponentiation, the proof is completed.
\end{proof}
\end{lemma}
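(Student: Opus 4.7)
The plan is to feed the covariance estimates of Proposition \ref{thm:cov} into the almost-identity matrix lemma (Lemma \ref{lem:matrix}) and then carefully bookkeep the resulting error terms. First I would show that the assumption $|t_j - t_k| \geq 1$ for $j \neq k$, combined with Proposition \ref{thm:cov} and the asymptotic \eqref{eq:coulomb} (so that $|c_{jk}| \ll (\log \frac{1}{\delta})^{-1} |t_j - t_k|^{-1}$ for $j \neq k$), yields
\begin{align*}
\sum_{1 \leq j \neq k \leq r} |c_{jk}|^2 \ll \frac{1}{\log^2 \frac{1}{\delta}} \sum_{1 \leq j \leq r} \sum_{\substack{1 \leq k \leq r\\k \neq j}} \frac{1}{|t_j - t_k|^2} \ll \frac{r}{\log^2 \frac{1}{\delta}}.
\end{align*}
In particular, for $r \leq \frac{1}{2}\log \frac{1}{\delta}$ the off-diagonal entries satisfy $|c_{jk}| \leq 1/(2r)$, so $\mathcal{C}$ lies in the class $M_r(\epsilon)$ with $\epsilon = (\log \frac{1}{\delta})^{-1}$ to which Lemma \ref{lem:matrix} applies.

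Next I would invoke the determinant estimate of Lemma \ref{lem:matrix} to conclude $\det(\mathcal{C}) = 1 + O(r/\log^2 \frac{1}{\delta})$, from which $(\det \mathcal{C})^{-1/2} = 1 + O(r/\log^2 \frac{1}{\delta})$. This supplies the prefactor in both asserted expressions. For the quadratic form in the exponent, the same lemma gives
\begin{align*}
\tilde{c}_{jj} = 1 + O\!\left(\frac{r}{\log^2 \frac{1}{\delta}}\right), \qquad \tilde{c}_{jk} = -c_{jk} + O\!\left(\sum_{\substack{1 \leq i \leq r\\i \neq j,k}} |c_{ji} c_{ik}| + \frac{r}{\log^3 \frac{1}{\delta}}\right),
\end{align*}
where the second error uses $\epsilon = (\log \frac{1}{\delta})^{-1}$ and the bound above on $\sum |c_{lm}|^2$. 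Summing against $x_j x_k$ then decomposes $\langle \mathcal{C}^{-1} \boldsymbol{x}, \boldsymbol{x} \rangle$ into a diagonal piece $(1 + O(r/\log^2 \frac{1}{\delta}))\|\boldsymbol{x}\|^2$, the leading off-diagonal piece $-\sum_{j \neq k} c_{jk} x_j x_k = -2\sum_{j < k} c_{jk} x_j x_k$, and a remainder.

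To handle the remainder, I would apply AM--GM in the form $|x_j x_k| \leq \frac{1}{2}(|x_j|^2 + |x_k|^2)$ to each of the cross terms $\sum_{j \neq k} |x_j x_k| \sum_{i \neq j,k} |c_{ji} c_{ik}|$ and $\frac{r}{\log^3 \frac{1}{\delta}} \sum_{j \neq k} |x_j x_k|$; combined with the $|c_{jk}| \ll 1/(\log \frac{1}{\delta} \cdot |t_j - t_k|)$ bound this lets me majorize both by a multiple of $\|\boldsymbol{x}\|^2$ times $\frac{r}{\log^2 \frac{1}{\delta}} + \frac{r^2}{\log^3 \frac{1}{\delta}}$, which under the hypothesis $r \leq \frac{1}{2}\log \frac{1}{\delta}$ is $O(r/\log^2 \frac{1}{\delta})\|\boldsymbol{x}\|^2$. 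Plugging this back into $\langle \mathcal{C}^{-1}\boldsymbol{x}, \boldsymbol{x}\rangle$ and exponentiating gives exactly the more precise form. For the cruder first form, I would further estimate the leading off-diagonal contribution $\sum_{j < k} c_{jk} x_j x_k$ by the argument used in Lemma \ref{lem:cxx}, which via AM--GM and the harmonic-sum bound $\sum_{k \neq j} 1/|t_j - t_k| \ll \log 2r$ yields a contribution of size $O((\log 2r/\log \frac{1}{\delta})\|\boldsymbol{x}\|^2)$, absorbing it into the stated error factor multiplying $\|\boldsymbol{x}\|^2$.

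The main obstacle is not a single conceptual difficulty but the careful management of three distinct scales of error — the determinant fluctuation $r/\log^2 \frac{1}{\delta}$, the diagonal inverse fluctuation of the same size, and the off-diagonal cross-terms $\sum_i |c_{ji} c_{ik}|$ — so that they all cleanly collapse to the uniform bound $O(r/\log^2 \frac{1}{\delta})\|\boldsymbol{x}\|^2$ claimed in the precise form. The hypothesis $r \leq \frac{1}{2}\log \frac{1}{\delta}$ is precisely what is needed to ensure $\epsilon \leq 1/(2r)$ so that Lemma \ref{lem:matrix} is applicable and to guarantee that $r^2/\log^3 \frac{1}{\delta}$ does not dominate $r/\log^2 \frac{1}{\delta}$.
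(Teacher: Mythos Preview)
Your proposal is correct and follows essentially the same route as the paper: bound $\sum_{j\neq k}|c_{jk}|^2$ via Proposition~\ref{thm:cov} and \eqref{eq:coulomb}, feed this into Lemma~\ref{lem:matrix} for both $\det\mathcal{C}$ and the entries of $\mathcal{C}^{-1}$, then use AM--GM to control the off-diagonal cross-terms and invoke the argument of Lemma~\ref{lem:cxx} for the cruder form. The bookkeeping of the three error scales and the role of the hypothesis $r\leq\frac{1}{2}\log\frac{1}{\delta}$ are exactly as in the paper's own proof.
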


We also need two large deviation lemmas.

\begin{lemma} \label{lem:decaymu}
Let $r \geq 1.$ Then for any $R > \sqrt{\delta \log \frac{1}{\delta}},$ we have
\begin{align*}
\mu_{\delta;\boldsymbol{t}}\left( \| \boldsymbol{x} \|_{\infty}>R \right) \leq 2r\exp \left( -\frac{R^2}{4\delta \log \frac{1}{\delta}} \right).
\end{align*}

\begin{proof}
The union bound gives
\begin{align*}
\mu_{\delta;\boldsymbol{t}}\left( \| \boldsymbol{x} \|_{\infty}>R \right)=\mathbb{P}( \left\| \boldsymbol{X}_{\delta,\boldsymbol{t}} \right\|_{\infty}>R ) \leq 
\sum_{j=1}^r \left(\mathbb{P}( X_{\delta,t_j} >R )+\mathbb{P}(  X_{\delta,t_j} <-R \right)).
\end{align*}
By symmetry, here we only bound the $\mathbb{P}\left( X_{\delta,t_j} >R \right)$. The moment generating function of $X_{\delta,t}$ is
\begin{align*}
\mathbb{E}( e^{sX_{\delta,t}} )
&=\prod_{\gamma>0} \mathbb{E}
\left(\exp \left( 2s\Re\left( w(\rho)U_{\gamma} \right) \right) \right)\\
&=\prod_{\gamma>0} I_0 \left( 2s\left| w(\rho) \right| \right),
\end{align*}
where 
\begin{align*}
I_0(x):=\frac{1}{2\pi}\int_{0}^{2\pi}e^{x\cos(\theta)}d\theta
\end{align*}
is the modified
Bessel function of order $0.$ Applying Chernoff bound with the inequality
$I_0(x) \leq \exp \left( x^2/4 \right)$
for $x \in \mathbb{R}$ (see \cite[Lemma 2.3]{MR2998146}), we have
\begin{align*}
\mathbb{P}\left(  X_{\delta,t} >R \right)
\leq e^{-sR} \, \mathbb{E}( e^{sX_{\delta,t}} )
\leq \exp \left( -sR+\frac{s^2}{2}V_j \right).
\end{align*}
Since $V_j = \delta\log\frac{1}{\delta}+O(\delta)$ by Proposition \ref{thm:cov}, the lemma follows from taking 
$s=R\left(\delta \log \frac{1}{\delta} \right)^{-1}.$ 
\end{proof}

\end{lemma}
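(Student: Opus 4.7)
The plan is to establish this tail bound via a standard Chernoff argument adapted to the explicit Rubinstein--Sarnak-type random series representation provided by Proposition \ref{thm:rs}.

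First I would reduce the multi-dimensional event to one-dimensional tails by a union bound:
\begin{align*}
\mu_{\delta,\boldsymbol{t}}(\|\boldsymbol{x}\|_\infty > R) \;\le\; \sum_{j=1}^r \bigl[\,\mathbb{P}(X_{\delta,t_j} > R) + \mathbb{P}(X_{\delta,t_j} < -R)\,\bigr],
\end{align*}
and note that each $X_{\delta,t_j}$ is symmetric about $0$ (since $U_\gamma$ and $-U_\gamma$ are equidistributed on $\mathbb{T}$), so it suffices to bound a single upper tail $\mathbb{P}(X_{\delta,t} > R)$.

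The key step is computing the moment generating function. Because the $U_\gamma$ are independent, the MGF factors as
\begin{align*}
\mathbb{E}[e^{s X_{\delta,t}}] = \prod_{\gamma > 0} \mathbb{E}\bigl[\exp(2s\,\mathrm{Re}(w(\rho)U_\gamma))\bigr] = \prod_{\gamma > 0} I_0(2s|w(\rho)|),
\end{align*}
using rotation invariance of the uniform measure on $\mathbb{T}$, where $I_0$ is the modified Bessel function of order zero. Applying the classical bound $I_0(x) \le \exp(x^2/4)$ (valid for all real $x$, coming from termwise comparison with the Taylor series of $e^{x^2/4}$), the product is dominated by $\exp(s^2 \sum_{\gamma>0}|w(\rho)|^2) = \exp(s^2 V_j / 2)$, where the last identity uses the defining formula for $V_j = \mathrm{Cov}_{jj}$ from Proposition \ref{thm:rs}. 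Then Markov's inequality applied to $e^{sX_{\delta,t}}$ yields $\mathbb{P}(X_{\delta,t_j} > R) \le \exp(-sR + s^2 V_j / 2)$ for every $s > 0$.

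The final step is to optimize: choose $s = R/V_j$, giving the sub-Gaussian bound $\exp(-R^2/(2V_j))$, and invoke Proposition \ref{thm:cov} which states $V_j = \delta \log(1/\delta) + O(\delta)$. For $R > \sqrt{\delta \log (1/\delta)}$ one readily checks that $1/(2V_j) \ge 1/(4\delta \log(1/\delta))$ once $\delta$ is small, delivering the claimed inequality after combining the $2r$ tail contributions from the union bound. The main potential pitfall is making sure the error term in the expansion $V_j = \delta \log(1/\delta) + O(\delta)$ is absorbed cleanly in the exponent; this is why the statement uses the constant $4$ rather than the sharper $2$ in the denominator, and why the hypothesis $R > \sqrt{\delta \log (1/\delta)}$ is imposed --- it ensures the $O(\delta)$ correction to $V_j$ is genuinely negligible against $R^2$.
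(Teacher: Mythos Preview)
Your proposal is correct and follows essentially the same approach as the paper: union bound, symmetry, MGF factorization via independence of the $U_\gamma$, the Bessel identity and bound $I_0(x)\le e^{x^2/4}$, then Chernoff. The only cosmetic difference is that you optimize exactly with $s=R/V_j$ and then bound $V_j$, whereas the paper plugs in $s=R(\delta\log(1/\delta))^{-1}$ directly; both lead to the same conclusion.
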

\begin{lemma}\label{lem:decay2}
Let $1 \leq r \leq \frac{1}{2}\log \frac{1}{\delta}$ and $\boldsymbol{x} \in \mathbb{R}^r.$ Suppose  $R >  \sqrt{2r\log 2r}.$ Then
\begin{align*}
\frac{1}{(2\pi)^{r/2}(\det\mathcal{C})^{1/2}}
\int_{\|\boldsymbol{x}\|>R}\exp \left( -\frac{1}{2}\langle \mathcal{C}^{-1}\boldsymbol{x},\boldsymbol{x}\rangle \right) d\boldsymbol{x} \ll \exp \left( -\frac{R^2}{4} \right).
\end{align*}
\begin{proof}
Applying Lemma \ref{lem:finale}, the integral is
\begin{align*}
\ll \frac{1}{(2\pi)^{r/2}} \int_{\|\boldsymbol{x}\|>R}
\exp \left( -\frac{1}{2}\|\boldsymbol{x}\|^2 \left( 1+O\left(\frac{\log 2r}{\log \frac{1}{\delta}} \right) \right) \right) d\boldsymbol{x}.
\end{align*}
Then, as in the proof of Lemma \ref{lem:comp}, this is again
\begin{align*}
\ll \exp \left(-\frac{R^2}{4} \right),
\end{align*}
and hence the lemma follows.
\end{proof}
\end{lemma}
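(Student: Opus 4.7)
The plan is to reduce the statement to a standard Gaussian tail estimate by invoking Lemma \ref{lem:finale}, and then to evaluate that tail by passing to spherical coordinates, in the same style as the second half of the proof of Lemma \ref{lem:comp}.

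First I would apply Lemma \ref{lem:finale}, which yields the pointwise bound
\[
\frac{1}{(2\pi)^{r/2}(\det\mathcal{C})^{1/2}} \exp\!\left(-\tfrac{1}{2}\langle \mathcal{C}^{-1}\boldsymbol{x},\boldsymbol{x}\rangle\right) \ll \frac{1}{(2\pi)^{r/2}} \exp\!\left(-\tfrac{1}{2}\Bigl(1+O\!\bigl(\tfrac{\log 2r}{\log(1/\delta)}\bigr)\Bigr)\|\boldsymbol{x}\|^2\right).
\]
Under the hypothesis $r \leq \tfrac{1}{2}\log(1/\delta)$, the $O$-term is $o(1)$ as $\delta \to 0^+$, so for sufficiently small $\delta$ the coefficient of $\|\boldsymbol{x}\|^2$ is at least $\tfrac{2}{3}$, and the density is dominated pointwise by $(2\pi)^{-r/2}\exp(-\|\boldsymbol{x}\|^2/3)$.

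Next I would pass to spherical coordinates, reducing the tail integral to
\[
\frac{1}{2^{r/2-1}\Gamma(r/2)} \int_R^\infty x^{r-1} e^{-x^2/3}\, dx.
\]
Splitting $e^{-x^2/3} = e^{-x^2/4}\cdot e^{-x^2/12}$ and noting that $x^{r-1}e^{-x^2/12}$ is decreasing once $x^2 \geq 6(r-1)$ (which holds at $x = R$ since $R^2 > 2r\log r$, with the small-$r$ range handled separately), the integrand on $[R,\infty)$ is bounded by $R^{r-1} e^{-R^2/12} e^{-x^2/4}$. A standard Mill's-ratio estimate for the residual Gaussian then gives
\[
\int_R^\infty x^{r-1} e^{-x^2/3}\, dx \ll R^{r-2}\, e^{-R^2/12}\, e^{-R^2/4}.
\]

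The main obstacle is therefore to check that the prefactor $R^{r-2}\, e^{-R^2/12}/(2^{r/2-1}\Gamma(r/2))$ is $O(1)$, so that only $\exp(-R^2/4)$ survives. Applying Stirling to $\Gamma(r/2)$ and using $R^2 \geq 2r\log r$, this reduces to verifying that the exponential term $e^{-r\log r/6}$ absorbs the combined polynomial-in-$r$ contributions coming from $R^{r-2}$ and from $(e/r)^{r/2}$; after taking logarithms this is essentially $(r/2)\log\log r \ll r\log r$ for large $r$, with the bounded range of $r$ handled by inspection. This budget of logarithmic slack is exactly what the hypothesis $R > \sqrt{2r\log r}$ is calibrated to provide.
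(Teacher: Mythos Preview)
Your proposal is correct and follows essentially the same route as the paper: apply Lemma~\ref{lem:finale} to reduce to a standard Gaussian tail with exponent $\geq 1/3$, then pass to spherical coordinates and estimate the radial integral exactly as in the second half of the proof of Lemma~\ref{lem:comp}. The paper simply cites that earlier computation rather than writing out the split $e^{-x^2/3}=e^{-x^2/4}e^{-x^2/12}$ and the Stirling check, so your argument is just a more explicit rendering of the same idea.
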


\begin{proof}[Proof of Theorem \ref{cor:compare}]
 Given $R>0,$ let
\begin{align*}
B_R:=
\left\{ \boldsymbol{x} \in B \,:\,
\|\boldsymbol{x}\|_{\infty} \leq R
\right\}
\end{align*}
and
\begin{align*}
\widetilde{B}_R
:=\left\{ \boldsymbol{x} \in \mathbb{R}^r \,:\, 
\left( \frac{x_1}{\sqrt{V_1}},\ldots,\frac{x_r}{\sqrt{V_r}}  \right) \in B_R
\right\}.
\end{align*}
Then, by Proposition \ref{thm:cov} and Lemma \ref{lem:decaymu}, we have
\begin{align} \label{eq:heybro2}
\mathbb{P}_x^{\log}( \widetilde{\boldsymbol{E}}(x;\delta,\boldsymbol{t}) \in B )
=\mu_{\delta,\boldsymbol{t}}( \widetilde{B}_R )
+O\left( r \exp \left( -\frac{R^2}{4} \right) \right).
\end{align}
It follows from Theorem \ref{thm:clt} that
\begin{gather} 
\mu_{\delta,\boldsymbol{t}}( \widetilde{B}_R )=\frac{1}{(2\pi)^{r/2}(\det\mathcal{C})^{1/2}}
\int_{B_R}
\exp \left( -\frac{1}{2}\langle \mathcal{C}^{-1}\boldsymbol{x},\boldsymbol{x}\rangle  \right)
d\boldsymbol{x} \nonumber\\
+O\left( r^4 T^3 \left( \frac{1}{\sqrt{2\pi}}
+O \left( \frac{\log 2r}{\log 1/\delta} \right) \right)^r \left( \frac{1}{\delta}\log \frac{1}{\delta} \right)^{-1}(2R)^r \right). \label{eq:afterthm12}
\end{gather}
Since $\|\boldsymbol{x}\|=\|\boldsymbol{x}\|_2 \geq \|\boldsymbol{x}\|_{\infty},$ applying Lemma \ref{lem:decay2} yields
\begin{gather} \label{eq:whatisgoingon2}
\frac{1}{(2\pi)^{r/2}(\det\mathcal{C})^{1/2}}
\int_{B \setminus B_R}
\exp \left( -\frac{1}{2}\langle \mathcal{C}^{-1}\boldsymbol{x},\boldsymbol{x}\rangle  \right)
d\boldsymbol{x} 
\ll  \exp\left( -\frac{R^2}{4} \right).
\end{gather}
Taking $R=\sqrt{5 \log \frac{1}{\delta}},$ then the theorem follows from combining (\ref{eq:heybro2}), (\ref{eq:afterthm12}) and (\ref{eq:whatisgoingon2}).
\end{proof}

\section{Proof of corollaries} \label{sec:pfofcor}

Applying Theorems \ref{thm:clt} and \ref{cor:compare}, we prove the remaining corollaries. 

\begin{proof}[Proof of  Corollary \ref{cor:negcorr}]
Throughout the proof, all implied constants depend on both $r$ and $T.$ Given $M>0,$ let
\begin{align*}
R_{\boldsymbol{\alpha},\boldsymbol{\beta};M}:=
\left\{ \boldsymbol{x} \in R_{\boldsymbol{\alpha},\boldsymbol{\beta}}\,:\,
\|\boldsymbol{x}\| \leq M
\right\}
\end{align*}
and
\begin{align*}
\widetilde{R}_{\boldsymbol{\alpha},\boldsymbol{\beta};M}
:=\left\{ \boldsymbol{x} \in \mathbb{R}^r \,:\, 
\left( \frac{x_1}{\sqrt{V_1}},\ldots,\frac{x_r}{\sqrt{V_r}}  \right) \in R_{\boldsymbol{\alpha},\boldsymbol{\beta};M}
\right\}.
\end{align*}
Since $\|\boldsymbol{x}\|=\|\boldsymbol{x}\|_2 \leq \sqrt{r} \|\boldsymbol{x}\|_{\infty},$  applying Lemma \ref{lem:decaymu} with  Proposition \ref{thm:cov} gives
\begin{align} \label{eq:heybro}
\mathbb{P}_x^{\log}( \widetilde{\boldsymbol{E}}(x;\delta,\boldsymbol{t}) \in R_{\boldsymbol{\alpha},\boldsymbol{\beta}} )
=\mu_{\delta,\boldsymbol{t}}( \widetilde{R}_{\boldsymbol{\alpha},\boldsymbol{\beta};M} )
+O\left( r \exp \left( -\frac{M^2}{4r} \right) \right).
\end{align}
Then, it follows from Theorem \ref{thm:clt} that
\begin{gather} 
\mu_{\delta,\boldsymbol{t}}( \widetilde{R}_{\boldsymbol{\alpha},\boldsymbol{\beta};M} )=\frac{1}{(2\pi)^{r/2}(\det\mathcal{C})^{1/2}}
\int_{R_{\boldsymbol{\alpha},\boldsymbol{\beta};M}}
\exp \left( -\frac{1}{2}\langle \mathcal{C}^{-1}\boldsymbol{x},\boldsymbol{x}\rangle  \right)
d\boldsymbol{x} \nonumber\\
+O\left( \left( \frac{1}{\delta}\log \frac{1}{\delta} \right)^{-1}M^r \right). \label{eq:afterthm1}
\end{gather}
Let $M=\left(\log \frac{1}{\delta}\right)^{1/10}.$ For $\|\boldsymbol{x}\| \leq M,$ 
applying Lemma \ref{lem:finale} followed by Taylor expansions
\begin{align*}
\exp\left( O\left( \frac{1}{\log^2 \frac{1}{\delta}}\|\boldsymbol{x}\|^2 \right) \right)=
1+O\left( \frac{1}{\log^2 \frac{1}{\delta}}\|\boldsymbol{x}\|^2 \right)
\end{align*}
and
\begin{align*}
\exp\left( \sum_{1\leq j<k\leq r}c_{jk}x_jx_k \right)
&=1+\sum_{1\leq j<k\leq r}c_{jk}x_jx_k 
+O\left(\left(\sum_{1\leq j<k\leq r}|c_{jk}||x_jx_k|\right)^2\right)\\
&=1+\sum_{1\leq j<k\leq r}c_{jk}x_jx_k
+O\left( \frac{1}{\log^2 \frac{1}{\delta}}\|\boldsymbol{x}\|^4 \right),
\end{align*}
we have
\begin{gather} 
\frac{1}{(2\pi)^{r/2}(\det\mathcal{C})^{1/2}}
\exp \left( -\frac{1}{2}\langle \mathcal{C}^{-1}\boldsymbol{x},\boldsymbol{x}\rangle  \right)=\frac{1}{(2\pi)^{r/2}}\exp\left( -\frac{1}{2}\|\boldsymbol{x}\|^2 \right) \nonumber\\
\cdot  
\left(1+\sum_{1\leq j<k \leq r}c_{jk}x_jx_k+O\left( \frac{1}{\log^2 \frac{1}{\delta}} 
+ \frac{1}{\log^2 \frac{1}{\delta}} \|\boldsymbol{x}\|^2
+ \frac{1}{\log^2 \frac{1}{\delta}}\|\boldsymbol{x}\|^4 \right)\right). \label{eq:idkidk}
\end{gather}
The contribution of the main term to (\ref{eq:afterthm1}) is
\begin{align} \label{eq:mtmtmt}
&\frac{1}{(2\pi)^{r/2}}
\int_{R_{\boldsymbol{\alpha},\boldsymbol{\beta};M}}\exp\left(-\frac{1}{2}\|\boldsymbol{x}\|^2\right)
\left( 1+\sum_{1\leq j<k \leq r}c_{jk}x_jx_k
 \right) d\boldsymbol{x}.
 \end{align}
On the other hand, we have
\begin{gather*}
\frac{1}{(2\pi)^{r/2}}
\int_{R_{\boldsymbol{\alpha},\boldsymbol{\beta}}\setminus R_{\boldsymbol{\alpha},\boldsymbol{\beta};M}}
\exp\left(-\frac{1}{2}\|\boldsymbol{x}\|^2\right)
\left( 1+\sum_{1\leq j<k \leq r}c_{jk}x_jx_k
 \right) d\boldsymbol{x}\\
\ll 
\int_{\|\boldsymbol{x}\|>M}
\|\boldsymbol{x}\|^2
 \exp \left( -\frac{1}{2}\|\boldsymbol{x}
\|^2\right)dx
\ll  \exp \left( -\frac{M^2}{4} \right).
\end{gather*}
Therefore, the expression (\ref{eq:mtmtmt}) is 
 \begin{gather} 
\Phi(R_{\boldsymbol{\alpha},\boldsymbol{\beta}})+\frac{1}{2\pi}\sum_{1 \leq j<k \leq r} 
c_{jk}
\left( e^{ -\frac{1}{2}\alpha_j^2}-e^{ -\frac{1}{2}\beta_j^2}\right)\left( e^{-\frac{1}{2}\alpha_k^2}-e^{-\frac{1}{2}\beta_k^2}\right)
\Phi\left(\prod_{\substack{i=1\\i \neq j,k}}^r  (\alpha_i,\beta_i]\right) \nonumber\\
+O\left( \exp\left(-\frac{M^2}{4} \right) \right). \label{eq:mainalmost}
\end{gather}
Meanwhile, the contribution of the error term of (\ref{eq:idkidk}) to (\ref{eq:afterthm1}) is
\begin{align} \label{eq:etalmostfinale}
\ll \int_{R_{\boldsymbol{\alpha},\boldsymbol{\beta}}}\exp\left( -\frac{1}{2}\|\boldsymbol{x}\|^2 \right)
\left( \frac{1}{\log^2 \frac{1}{\delta}} 
+ \frac{1}{\log^2 \frac{1}{\delta}} \|\boldsymbol{x}\|^2
+\frac{1}{\log^2 \frac{1}{\delta}}\|\boldsymbol{x}\|^4 \right) d\boldsymbol{x}
\ll \frac{1}{\log^2 \frac{1}{\delta}}.
\end{align}
Finally, by Proposition \ref{thm:cov} with Lemma \ref{lem:eq:coulomb}, we have
\begin{align*}
c_{jk}=\frac{\Cov_{jk}}{\sqrt{V_jV_k}}
=-\frac{\Delta(|t_j-t_k|)}{\log \frac{1}{\delta}}
+O\left( \frac{1}{|t_j-t_k|} \cdot \frac{1}{\log^2 \frac{1}{\delta}} \right).
\end{align*}
Substituting this into (\ref{eq:mainalmost}) gives
\begin{align*}
\Phi(R_{\boldsymbol{\alpha},\boldsymbol{\beta}})
 -\frac{1}{2\pi\log \frac{1}{\delta}}\sum_{1 \leq j<k \leq r} 
 \Delta(|t_j-t_k|)
( e^{ -\frac{1}{2}\alpha_j^2}-e^{ -\frac{1}{2}\beta_j^2})( e^{-\frac{1}{2}\alpha_k^2}-e^{-\frac{1}{2}\beta_k^2})
\Phi\left(\prod_{\substack{i=1\\i \neq j,k}}^r  (\alpha_i,\beta_i]\right)
\end{align*}
with an error term
\begin{align} \label{eq:etfinale}
\leq \frac{1}{\log^2 \frac{1}{\delta}}
\sum_{1\leq j<k \leq r}\frac{1}{|t_j-t_k|}
&= \frac{1}{\log^2 \frac{1}{\delta}}\sum_{1 \leq j \leq r}
\sum_{\substack{1 \leq k \leq r\\k \neq j}}\frac{1}{|t_j-t_k|} \nonumber\\
&\ll \frac{1}{\log^2 \frac{1}{\delta}}
\end{align}
as $|t_j-t_k|\geq 1$ for $1 \leq j \neq k \leq r.$ Collecting the error terms in (\ref{eq:heybro}), (\ref{eq:afterthm1}), (\ref{eq:mainalmost}), (\ref{eq:etalmostfinale}) and (\ref{eq:etfinale}) with $M=\left( \log \frac{1}{\delta} \right)^{1/10}$, the corollary follows.
\end{proof}

\begin{proof}[Proof of Corollary 
\ref{cor:largedev}]
Arguing analogously, one can show that
\begin{gather*}
\mathbb{P}_x^{\log}
\left( \|\widetilde{\boldsymbol{E}}(x;\delta,\boldsymbol{t})\| >V \right)=
\frac{1}{(2\pi)^{r/2}} \int_{\|\boldsymbol{x}\|>V}
\exp \left( -\frac{1}{2}\|\boldsymbol{x}\|^2 \right) d\boldsymbol{x} \\
-\frac{1}{\log \frac{1}{\delta}} \sum_{1 \leq j<k \leq r} 
\frac{\Delta (|t_j-t_k|)}{(2\pi)^{r/2}}
\int_{\|\boldsymbol{x}\|>V} x_jx_k e^{-\frac{1}{2}\|\boldsymbol{x}\|^2} d\boldsymbol{x}
+O_{r,T}\left( \frac{1}{\log^2 \frac{1}{\delta}} \right).
\end{gather*}
Since the above integrals 
\begin{align*}
\int_{\|\boldsymbol{x}\|>V} x_jx_k e^{-\frac{1}{2}\|\boldsymbol{x}\|^2} d\boldsymbol{x}
\end{align*}
vanish for $1 \leq j<k \leq r$ by symmetry, the corollary follows. 
\end{proof}

\begin{proof}[Proof of Corollary \ref{thm:extremebias}]
An analogous argument to that of Corollary \ref{cor:negcorr} applies. To avoid repetition, the proof is omitted.
\end{proof}

\begin{proof}[Proof of  Corollary \ref{cor:bias}]

Appealing to Theorem \ref{cor:compare}, we have
\begin{gather*} 
\rho(\delta;\boldsymbol{t}) 
=
\frac{1}{(2\pi)^{r/2}(\det\mathcal{C})^{1/2}}\int_{x_1>\cdots>x_r}
\exp \left( -\frac{1}{2}\langle \mathcal{C}^{-1}\boldsymbol{x},\boldsymbol{x}\rangle\right) d\boldsymbol{x} 
 \\
+O \left( r^4 T^3 \delta \left( 4\log \frac{1}{\delta} \right)^{r/2-1} \right).
\end{gather*}
Using Stirling's formula, one can show that the error term here is negligible.
Applying Lemma \ref{lem:finale}, the integral here is
\begin{gather}
\left( 1+O\left( \frac{r}{\log^2 \frac{1}{\delta}} \right) \right)
\frac{1}{(2\pi)^{r/2}}\int_{x_1>\cdots>x_r}
\exp \left( -\frac{1}{2}\|\boldsymbol{x}\|^2
\left( 1+O\left( \frac{\log 2r}{\log \frac{1}{\delta}} \right) \right)\right) d\boldsymbol{x} \nonumber\\
=\left( 1+O\left( \frac{r}{\log^2 \frac{1}{\delta}} \right) \right)
\frac{1}{(2\pi)^{r/2}}\int_{x_1>\cdots>x_r}
\exp \left( -\frac{1}{2}\|\boldsymbol{x}\|^2 \right) d\boldsymbol{x} \nonumber \\
+ O \left( 
\frac{1}{(2\pi)^{r/2}}\int_{x_1>\cdots>x_r}
\exp \left( -\frac{1}{2}\|\boldsymbol{x}\|^2 \right) 
\left| 
\exp \left( C \cdot \frac{\log 2r}{\log \frac{1}{\delta}}\|\boldsymbol{x}\|^2 \right) -1
\right| d\boldsymbol{x}
\right) \nonumber \\
+ O \left( 
\frac{1}{(2\pi)^{r/2}}\int_{x_1>\cdots>x_r}
\exp \left( -\frac{1}{2}\|\boldsymbol{x}\|^2 \right) 
\left| 
\exp \left( -C \cdot \frac{\log 2r}{\log \frac{1}{\delta}}\|\boldsymbol{x}\|^2 \right) -1
\right| d\boldsymbol{x}
\right) \label{eq:expc}
\end{gather}
for some absolute constant $C>0.$ By symmetry, the first term here is
\begin{align} \label{eq:fhlmt}
\left( 1+O\left( \frac{r}{\log^2 \frac{1}{\delta}} \right) \right) \frac{1}{r!}.
\end{align}
By Taylor expanding the exponential function, the second and third terms of (\ref{eq:expc}) are
\begin{align*}
\ll \sum_{k=1}^{\infty} \frac{1}{k!} \left( C \cdot \frac{\log 2r}{\log \frac{1}{\delta}} \right)^k \frac{1}{(2 \pi)^{r/2}} \int_{x_1>\cdots>x_r} 
\exp \left( -\frac{1}{2}\|\boldsymbol{x}\|^2 \right) \|\boldsymbol{x}\|^{2k} d \boldsymbol{x},
\end{align*}
which is again by symmetry
\begin{align} \label{eq:aftertaylorsymm}
=\frac{1}{r!}\sum_{k=1}^{\infty} \frac{1}{k!} \left( C \cdot \frac{\log 2r}{\log \frac{1}{\delta}} \right)^k \frac{1}{(2 \pi)^{r/2}} \int_{\mathbb{R}^r} 
\exp \left( -\frac{1}{2}\|\boldsymbol{x}\|^2 \right) \|\boldsymbol{x}\|^{2k} d \boldsymbol{x}.
\end{align}
Using spherical coordinates, one can show that 
\begin{align*}
\frac{1}{(2 \pi)^{r/2}} \int_{\mathbb{R}^r} 
\exp \left( -\frac{1}{2}\|\boldsymbol{x}\|^2 \right) \|\boldsymbol{x}\|^{2k} d \boldsymbol{x}
=\frac{2\pi^{r/2}}{\Gamma(r/2)} \cdot \frac{1}{(2\pi)^{r/2}} 
\int_0^{\infty} e^{-\frac{1}{2}x^2} x^{r+2k-1} dx.
\end{align*}
Making the change of variables $y=\frac{1}{2}x^2,$ this becomes
\begin{align*}
 \frac{2\pi^{r/2}}{\Gamma(r/2)} \cdot \frac{1}{(2\pi)^{r/2}}   \int_{0}^{\infty}
 e^{-y} (2y)^{(r/2)+k-1} dy
=
2^k \cdot  \frac{\Gamma((r/2)+k-1)}{\Gamma(r/2)}.
\end{align*}
Plugging this into (\ref{eq:aftertaylorsymm}), the second and third terms of (\ref{eq:expc}) are
\begin{align}
\ll \frac{1}{r!} \sum_{k=1}^{\infty}   \left( 2C \cdot \frac{\log 2r}{\log \frac{1}{\delta}} \right)^k \frac{\Gamma((r/2)+k-1)}{\Gamma(r/2)\Gamma(k+1)} 
\ll& \frac{1}{r!} \sum_{k=1}^{\infty}   \left( 2C \cdot \frac{r\log 2r}{\log \frac{1}{\delta}} \right)^k \nonumber\\
\ll& \frac{1}{r!} \cdot \frac{r\log 2r}{\log \frac{1}{\delta}}. \label{eq:fhlet}
\end{align}
Combining (\ref{eq:fhlmt}) and (\ref{eq:fhlet}), the corollary follows.
\end{proof}

\begin{proof}[Proof of Corollary \ref{cor:extremebias}]


Let $R=10\sqrt{2r\log 2r}.$ We define
\begin{align*}
H_{r,s}:=\{  x_1>\cdots>x_s>\max_{s<j \leq r}x_j \},
\end{align*}
\begin{align*}
H_{r,s;R}:=\{ x_1>\cdots>x_s>\max_{s<j \leq r}x_j, \| \boldsymbol{x} \|_{\infty} \leq R\}
\end{align*}
and
\begin{align*}
\widetilde{H}_{r,s;R}:=\left\{ \boldsymbol{x} \in \mathbb{R}^r \,:\, 
\left( \frac{x_1}{\sqrt{V_1}},\ldots,\frac{x_r}{\sqrt{V_r}}  \right) \in H_{r,s;R}
\right\}.
\end{align*}
Applying
Lemma \ref{lem:decaymu} with Proposition \ref{thm:cov} gives
\begin{align} \label{eq:2024et1}
\rho_s(\delta;\boldsymbol{t})=
\mu_{\delta;\boldsymbol{t}}(\widetilde{H}_{r,s;R})
+O \left(r\exp \left( -\frac{R^2}{4} \right) \right).
\end{align}
Then, it follows from Theorem \ref{thm:clt} that
\begin{gather} 
\mu_{\delta;\boldsymbol{t}}(\widetilde{H}_{r,s;R})=\frac{1}{(2\pi)^{r/2}(\det\mathcal{C})^{1/2}}
\int_{H_{r,s;R}}
\exp \left( -\frac{1}{2}\langle \mathcal{C}^{-1}\boldsymbol{x},\boldsymbol{x}\rangle  \right)
d\boldsymbol{x} \nonumber\\
+O\left( \frac{r^4T^3}{(2\pi)^{r/2}}   \left(\frac{1}{\delta}
\log \frac{1}{\delta} \right)^{-1} \meas(H_{r,s;R}) \right). \label{eq:2024mt1}
\end{gather}
Note that by symmetry, we have
\begin{align*}
\meas(H_{r,s;R})=\frac{(r-s)!}{r!} \cdot (2R)^r,
\end{align*}
so that the error term here is
\begin{align} \label{eq:2024et2}
\ll \frac{(r-s)!}{r!} \cdot \frac{r^4T^3}{(2\pi)^{r/2}}   \left(\frac{1}{\delta}
\log \frac{1}{\delta} \right)^{-1} (2R)^r.
\end{align}
On the other hand, since $\|\boldsymbol{x}\|_{\infty} \leq \|\boldsymbol{x}\|=\|\boldsymbol{x}\|_2,$ we have
\begin{align*}
H_{r,s} \cap \{ \|\boldsymbol{x}\| \leq R \} = H_{r,s;R} \cap \{ \|\boldsymbol{x}\| \leq R \}.
\end{align*}
Therefore, by Lemma \ref{lem:decay2}, the main term of (\ref{eq:2024mt1}) is
\begin{align} \label{eq:2024et3}
\frac{1}{(2\pi)^{r/2}(\det\mathcal{C})^{1/2}}
\int_{H_{r,s} \cap \{\|\boldsymbol{x}\|\leq R\}}
\exp \left( -\frac{1}{2}\langle \mathcal{C}^{-1}\boldsymbol{x},\boldsymbol{x}\rangle  \right)
d\boldsymbol{x}
+O\left( \exp \left( -\frac{R^2}{4} \right) \right).
\end{align}
Appealing to Lemma \ref{lem:finale}, the main term here is
\begin{align} \label{eq:2024mt2}
\frac{1+o(1)}{(2\pi)^{r/2}}
\int_{H_{r,s} \cap \{\|\boldsymbol{x}\|\leq R\}}
\exp\left(-\frac{1}{2}\left( 1+O\left( 
\frac{r}{\log^2 \frac{1}{\delta}} \right) \right)\|\boldsymbol{x}\|^2+\sum_{1 \leq j<k\leq r}c_{jk}x_jx_k \right)
d\boldsymbol{x} 
\end{align}
Using the assumption that $|t_j-t_k| \geq \log \frac{1}{\delta}$ whenever $\max\{j,k \}>s$ for $1 \leq j \neq k \leq r,$ it follows from Proposition \ref{thm:cov} that
\begin{align*}
\sum_{1 \leq j<k\leq r} c_{jk}x_jx_k =
-\frac{1}{\log \frac{1}{\delta}}\sum_{1 \leq j<k\leq s} \Delta(|t_j-t_k|) x_jx_k
+O\left( \frac{r }{\log^2 \frac{1}{\delta}} \|\boldsymbol{x}\|^2\right).
\end{align*}
Since $\|\boldsymbol{x}\| \leq R=10\sqrt{2r\log 2r},$ we conclude that (\ref{eq:2024mt2}) is
\begin{align} \label{eq:2024mt3}
\frac{1+o(1)}{(2\pi)^{r/2}}
\int_{H_{r,s} \cap \{\|\boldsymbol{x}\|\leq R\}}
\exp\left(-\frac{1}{2}\|\boldsymbol{x}\|^2-\frac{1}{\log \frac{1}{\delta}}\sum_{1 \leq j<k\leq s} \Delta(|t_j-t_k|) x_jx_k \right)
d\boldsymbol{x}.
\end{align}

To detect a strong bias, let $M=\sqrt{(2-\varepsilon)\log (r/s)}.$ Then, again by Proposition \ref{thm:cov}, the contribution from those $\boldsymbol{x} \in \mathbb{R}^r$ for which $ \max_{s<j \leq r} x_j \leq M$ to (\ref{eq:2024mt3}) is
\begin{gather}  
\ll \frac{1}{(2\pi)^{r/2}}
\int_{H_{r,s} \cap \{\|\boldsymbol{x}\|\leq R\} \cap \{ \max_{s<j \leq r} x_j \leq M \}}
\exp \left( 
-\frac{1}{2}  
\left( 1+O \left( \frac{\log 2r}{\log \frac{1}{\delta}} \right) \right)
\|\boldsymbol{x}\|^2 
\right) d\boldsymbol{x} \nonumber\\
\ll \frac{1}{(2\pi)^{r/2}} 
\int_{H_{r,s}\cap \{ \max_{s<j \leq r} x_j \leq M \}}
\exp \left( -\frac{1}{2}\|\boldsymbol{x}\|^2 \right) d\boldsymbol{x}. \label{eq:2024et4}
\end{gather}
We further decompose the domain of integration as
\begin{align*}
H_{r,s}\cap \{ \max_{s<j \leq r} x_j \leq M \}
=H_0 \sqcup H_1 \cdots \sqcup H_s,
\end{align*}
where
\begin{align*}
H_i:= \{ x_1>\cdots>x_i>M \geq x_{i+1} >\cdots > x_s>\max_{s<j \leq r} x_j \}.
\end{align*}
Then, the integral over each $H_i$ is computable and is
\begin{align*}
\frac{1}{(2\pi)^{r/2}}
\int_{H_i} \exp \left( -\frac{1}{2}\|\boldsymbol{x}\|^2 \right) d\boldsymbol{x}
&=\frac{1}{i!}(1-\Phi(M))^i \cdot \frac{(r-s)!}{(r-i)!}\Phi(M)^{r-i} \\
&= \frac{(r-s)!}{r!} \cdot  {r \choose i} (1-\Phi(M))^i
\Phi(M)^{r-i},
\end{align*}
where 
\begin{align*}
\Phi(M):=\frac{1}{\sqrt{2\pi}}\int_{-\infty}^M e^{-\frac{1}{2}x^2} dx.
\end{align*}
Therefore, the expression (\ref{eq:2024et4}) is
\begin{align} \label{eq:2024et5}
\ll \frac{(r-s)!}{r!} \sum_{0 \leq i \leq s} 
{r \choose i} (1-\Phi(M))^i \Phi(M)^{r-i}.        
\end{align}

To bound the sum, let $X \sim b(n,p)$ denote a binomial random variable with parameters $n$ and $p.$ If $0 \leq q<p,$ then the Chernoff bound gives
\begin{align*}
\mathbb{P}(X \leq nq) =\sum_{0 \leq k \leq nq} {n \choose k}p^{k}(1-p)^{n-k} \leq e^{-nD(q \| p)},
\end{align*}
where 
\begin{align*}
D(q \| p):=q\log \frac{q}{p}+(1-q)\log \frac{1-q}{1-p}
\end{align*}
is the Kullback--Leibler divergence (see \cite[Corollary 4.1]{MR3793013} for instance).
Therefore, we have
\begin{gather}
\sum_{0 \leq i \leq s} {r \choose i}(1-\Phi(M))^i \Phi(M)^{r-i} \nonumber \\
\leq \exp \left( -s\log \frac{s/r}{1-\Phi(M)}-(r-s)\log \frac{1-(s/r)}{\Phi(M)}  \right). \label{eq:afterkl}
\end{gather}
Recall that $M=\sqrt{(2-\varepsilon)\log (r/s)}.$ Let $\eta=\eta(\varepsilon)>0$ be sufficiently small so that
\begin{align*}
1-\Phi(M) =&
\frac{1}{\sqrt{2\pi}}\int_{M}^{\infty} e^{-\frac{1}{2}x^2} dx \\
\geq&  \frac{1}{10M} \exp\left( -\frac{1}{2}M^2 \right) \\
>& 100 \eta.
\end{align*}
Then, one can show that \eqref{eq:afterkl} is bounded by 
$\exp(-3s),$ so that (\ref{eq:2024et5}) is
\begin{align}
\leq \frac{(r-s)!}{r!} \cdot e^{-3s}. \label{eq:exp-3s}
\end{align}
Finally, we are left with the integral
\begin{align*}
\frac{1+o(1)}{(2\pi)^{r/2}}
\int_{H_{r,s} \cap \{\|\boldsymbol{x}\|\leq R\} \cap 
\{ \max_{s<j \leq r} x_j > M \}}
\exp\left(-\frac{1}{2}\|\boldsymbol{x}\|^2-\frac{1}{\log \frac{1}{\delta}}\sum_{1 \leq j<k\leq s} \Delta(|t_j-t_k|) x_jx_k \right)
d\boldsymbol{x},
\end{align*}
which is 
\begin{align*}
\leq 
\exp \left( o(1)
-\frac{M^2}{\log \frac{1}{\delta}} 
\sum_{1 \leq j<k\leq s} \Delta(|t_j-t_k|) \right)
\frac{1}{(2\pi)^{r/2}}
\int_{H_{r,s} }
e^{-\frac{1}{2}\|\boldsymbol{x}\|^2} d\boldsymbol{x}.
\end{align*}
Substituting $M=\sqrt{(2-\varepsilon)\log (r/s)},$ this is
\begin{align*}
\exp \left( o(1)
-(2-\varepsilon) \cdot \frac{\log(r/s)}{\log \frac{1}{\delta}} 
\sum_{1 \leq j<k\leq s} \Delta(|t_j-t_k|) \right)
 \frac{(r-s)!}{r!}.
\end{align*}
Using Stirling's formula, one can show that the error terms in (\ref{eq:2024et1}), (\ref{eq:2024et2}),  (\ref{eq:2024et3}) and \eqref{eq:exp-3s} are negligible, and hence the corollary follows.
\end{proof}

\begin{proof}[Proof of Corollary \ref{cor:extremebiasii}]

Consider the configuration
\begin{align*}
\boldsymbol{u}=\left(1,2,\ldots,s,-\log \frac{1}{\delta}, -2\log \frac{1}{\delta}, \ldots, -(r-s)\log \frac{1}{\delta} \right)
\end{align*}
for some integer $1 \leq s \leq r.$ Note that $T_{[r]} \leq \delta^{-\frac{1}{10}}$ for sufficiently small $\delta>0.$ Let $\varepsilon=1$ and $s=\lfloor \eta r \rfloor,$ where $\eta>0$ is the absolute constant from Corollary \ref{cor:extremebias}. Then, it follows from Lemma \ref{lem:eq:coulomb} that
\begin{align*}
\sum_{1 \leq j < k \leq s}\Delta(|u_j-u_k|)
=\left( \frac{1}{2}+o(1) \right) s\log s.
\end{align*}
Therefore, applying Corollary \ref{cor:extremebias} gives
\begin{align*}
\rho_s(\delta;\boldsymbol{u})
\leq \exp \left( o(1)-\frac{ \log (1/\eta)}{\log \frac{1}{\delta}} \cdot \frac{\eta r}{4} \log \frac{\eta r}{2}\right) \frac{(r-s)!}{r!},
\end{align*}
so that there exists an absolute constant $\eta_0>0$ for which
\begin{align} \label{eq:2024u1}
\rho_s(\delta;\boldsymbol{u}) \leq \exp \left( -\eta_0 \cdot  \frac{r\log \log \frac{1}{\delta}}{\log \frac{1}{\delta}} \right)\frac{(r-s)!}{r!},
\end{align}
provided that $\delta>0$ is sufficiently small. By symmetry, we have
\begin{align*}
\rho_s(\delta;\boldsymbol{u}) 
=\sum_{\sigma \in S_{r-s}} 
\rho(\delta; u_1, u_2, \ldots, u_s, u_{\sigma(s+1)}, u_{\sigma(s+2)}, \ldots, u_{\sigma(r)}),
\end{align*}
where $S_{r-s}$ is regarded as the symmetric group on the set $\{ s+1, s+2, \ldots, r \}.$  Therefore, it follows from (\ref{eq:2024u1}) that there exists $\sigma \in S_{r-s}$ for which
\begin{align*}
\rho_s(\delta;u_1, u_2, \ldots, u_s, u_{\sigma(s+1)}, u_{\sigma(s+2)}, \ldots, u_{\sigma(r)}) \leq \exp \left( -\eta_0 \cdot  \frac{r\log \log \frac{1}{\delta}}{\log \frac{1}{\delta}} \right) \frac{1}{r!},
\end{align*}
which yields the corollary on taking $\boldsymbol{t}=(u_1, u_2, \ldots, u_s, u_{\sigma(s+1)}, u_{\sigma(s+2)}, \ldots, u_{\sigma(r)}).$
\end{proof}


\section{Primes in shorter intervals} \label{newsection}


One unfortunate drawback of the Rubinstein--Sarnak approach is the qualitative nature of the LI hypothesis, which prevents achieving uniformity of $\delta$ in $X$ for $x \in [X, 2X].$ Therefore, to address shorter intervals, i.e., $\delta=\delta(X) \to 0$ as $X \to \infty$, we shall propose the following conjecture.

\begin{conjecture}[Quantitative linear independence conjecture (QLI)] \label{conj: qli}
Let $k \geq 2.$ Then there exists a constant $c_k > k$ such that 
for any $\boldsymbol{\varepsilon} \in \{\pm 1\}^k,$ we have
\begin{align} \label{eq:offoffdiag}
\#\{ \boldsymbol{\gamma} \in [0,T]^k \, : \, 0<|\langle \boldsymbol{\varepsilon}, \boldsymbol{\gamma} \rangle |
\leq T^{-c_k}\}=o_k ( N(T)^{k/2} ) 
\end{align}
as $T \to \infty,$ where 
\begin{align*}
\langle \boldsymbol{\varepsilon}, \boldsymbol{\gamma} \rangle := \sum_{j=1}^k \varepsilon_j \gamma_j.
\end{align*}
\end{conjecture}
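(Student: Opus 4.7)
The plan is to attack Conjecture~\ref{conj: qli} via a Fourier-analytic reduction to moments of the exponential sum
\begin{align*}
S(\xi) := \sum_{0 < \gamma \leq T} e^{2\pi i \xi \gamma},
\end{align*}
and then invoke quantitative conjectures on the $k$-level correlations of nontrivial zeros of $\zeta(s)$. If $\boldsymbol{\epsilon}$ has unequal numbers of $+1$'s and $-1$'s, then $|\langle \boldsymbol{\epsilon}, \boldsymbol{\gamma}\rangle| \gg_k 1$ since the smallest ordinate satisfies $\gamma_1 > 14$, so \eqref{eq:offoffdiag} is trivial for large $T$; hence one may assume $k$ is even with $k/2$ signs of each type.

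First, majorize the sharp indicator by $\phi(T^{c_k} x)$, where $\phi$ is a Schwartz function with $\phi \geq 1$ on $[-1,1]$ and $\widehat{\phi}$ supported in some fixed $[-M, M]$. Fourier inversion followed by the substitution $\eta = T^{c_k} \xi$ gives
\begin{align*}
\sum_{\boldsymbol{\gamma} \in [0,T]^k} \phi\bigl(T^{c_k} \langle \boldsymbol{\epsilon}, \boldsymbol{\gamma}\rangle\bigr) = \frac{1}{T^{c_k}} \int_{|\eta| \leq M T^{c_k}} \widehat{\phi}(\eta/T^{c_k}) \prod_{j=1}^{k} S(\epsilon_j \eta) \, d\eta.
\end{align*}
Under LI, the exact diagonal $\langle \boldsymbol{\epsilon}, \boldsymbol{\gamma}\rangle = 0$ consists only of tuples in which each $+$-coordinate is matched to a $-$-coordinate, and contributes $\asymp_k N(T)^{k/2}$ to the right-hand side, exactly the threshold we must beat, and precisely what the strict inequality $0 < |\langle \boldsymbol{\epsilon}, \boldsymbol{\gamma}\rangle|$ in the conjecture removes. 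Subtract this known diagonal.

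For the remaining off-diagonal portion, the trivial bound $|\widehat{\phi}| \ll 1$, a dyadic decomposition, and Cauchy--Schwarz yield
\begin{align*}
\int_{0}^{M T^{c_k}} |S(\eta)|^k \, d\eta \ll (M T^{c_k})^{1/2} \left(\int_0^{M T^{c_k}} |S(\eta)|^{2k}\, d\eta\right)^{\!1/2}.
\end{align*}
The $2k$-th moment of $S$ is conjecturally governed by Rudnick--Sarnak's $k$-level correlation hypothesis (a natural extension of Montgomery's pair correlation), under which one expects $\int_0^H |S(\eta)|^{2k}\, d\eta \asymp_k H \cdot N(T)^k$ for $H \gg T^{\varepsilon}$. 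Combining these yields an off-diagonal bound of the form $\ll_k T^{-c_k/2} N(T)^{k/2}$, which is $o(N(T)^{k/2})$ for \emph{any} $c_k > 0$—leaving a wide margin above the requested $c_k > k$.

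The main obstacle is that unconditional control on such $2k$-th moments at the fine scale $\eta \sim T^{c_k}$, vastly below the mean spacing $1/\log T$, is essentially unavailable. Even the $k = 2$ case demands pair correlation uniformly across all dyadic ranges in $\eta$, and higher $k$ invokes GUE-type predictions that remain wide open. An alternative route through the explicit formula expresses $S(\eta)$ as a sum over primes, but the ensuing moment bounds translate into short-interval prime correlations that are just as inaccessible. Consequently QLI appears to sit just beyond present-day technology, and the comfortable gap between the heuristic threshold ($c_k > 0$) and the requested one ($c_k > k$) strongly suggests the conjecture should hold once adequate inputs on zero statistics become available.
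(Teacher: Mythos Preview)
The statement you are addressing is a \emph{conjecture}, not a theorem; the paper does not prove it and offers only a brief heuristic (based on the average spacing $\asymp_k (T\log T)^{-k}$ of the values $\langle\boldsymbol{\epsilon},\boldsymbol{\gamma}\rangle \pmod 1$) to motivate why $c_k > k$ is a natural threshold. Your proposal is likewise not a proof, as you yourself concede in the final paragraph: the key input you invoke---control on $2k$-th moments of $S(\eta)$ at scales $\eta \asymp T^{c_k}$---amounts to quantitative $k$-level correlation information for zeta zeros that is itself wide open. So there is nothing in the paper to compare against, and what you have written is a heuristic strategy rather than an argument.

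Beyond this, two concrete errors should be flagged. First, your opening reduction is false: unequal numbers of $+1$'s and $-1$'s in $\boldsymbol{\epsilon}$ does \emph{not} force $|\langle\boldsymbol{\epsilon},\boldsymbol{\gamma}\rangle| \gg_k 1$. For instance with $k=3$ and signs $(+,+,-)$, nothing prevents $\gamma_1 + \gamma_2 - \gamma_3$ from being arbitrarily small for suitable ordinates in $[0,T]$; the lower bound $\gamma > 14$ on individual ordinates is irrelevant once signs mix. Second, your Cauchy--Schwarz step gains nothing: substituting $\int_0^H |S|^{2k} \asymp_k H\,N(T)^k$ with $H = MT^{c_k}$ into your displayed inequality gives $\int_0^H |S|^k \ll_k H\,N(T)^{k/2}$, and after dividing by the prefactor $T^{c_k}$ you recover only $\ll_k N(T)^{k/2}$---exactly the diagonal size, not the claimed $T^{-c_k/2} N(T)^{k/2}$. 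The purported ``wide margin'' therefore disappears.
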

The heuristic is as follows. To justify our choice of $T^{-c_k}$ with $c_k>k,$ note that the average gap of 
 $\langle \boldsymbol{\varepsilon}, \boldsymbol{\gamma} \rangle \Mod{1}$ is $(2N(T))^{-k} \asymp_k (T\log T)^{-k}.$ On the other hand,
 assuming LI, the alternating sum  $\langle \boldsymbol{\varepsilon}, \boldsymbol{\gamma} \rangle$
vanishes if and only if $k$ is even and $[k]$ can be partitioned into $k/2$ pairs $\{j,j'\} $
of which $\varepsilon_{j'}=-\varepsilon_{j}$ and $\gamma_{j'}=\gamma_j.$ We call these $\boldsymbol{\gamma}$ diagonal. As there are $\asymp_k N(T)^{k/2}$ such vectors if $k$ is even and none if $k$ is odd, the conjecture can be reinterpreted as saying that $100 \%$ of the $\boldsymbol{\gamma}$ satisfying 
$
|\langle \boldsymbol{\varepsilon}, \boldsymbol{\gamma} \rangle |
\leq T^{-c_k}$
are diagonal. 

For future reference, let $g(T) \leq \log T$ be a positive increasing function satisfying $g(T) \to \infty$ as $T \to \infty$ for which the left-hand side of (\ref{eq:offoffdiag}) is 
\begin{align} \label{eq:g(T)}
\ll_k N(T)^{k/2}/g(T).
\end{align}

\begin{remark}
Interestingly, Lamzouri \cite{lamzouri2023effective} very recently formulated an \textit{effective linear independence conjecture} (ELI), which is stronger than our QLI to obtain omega results for the error term in the prime number theorem. They are conjectured to be best possible by Montgomery \cite[Lecture 3]{MR634679}. For a weaker formulation of LI, see \cite{MR4082248}.
\end{remark}

With QLI, we can now show that the weighted count of primes in a short moving interval of length \( h = h(x) = \delta x \) is asymptotically normal, as \( x \in [1, X] \) varies (in logarithmic scale), provided that \( \delta = \delta(X) > (\log X)^{-\varepsilon} \) for any \( \varepsilon > 0 \).

\begin{theorem} \label{thm:short} Assume RH, LI and QLI. Given real numbers $U, \delta>0$ for which  $\delta=o(1)$ but $\log (1/\delta)=o(\log U)$ as $U \to \infty.$ 
If $u \in [U, 2U]$ is chosen uniformly at random, then
as  $U \to \infty,$ we have convergence in distribution to a standard Gaussian
\begin{align*}
\widetilde{E}\left( e^u;\delta,0 \right)
 \xrightarrow[]{d} \mathcal{N}(0,1),
\end{align*}
where we recall that 
\begin{align*}
\widetilde{E}\left( x;\delta,0 \right)=\frac{1}{\sqrt{V(\delta,0)\cdot x}} 
\left(  \psi\left( x+\frac{1}{2}\delta x \right) - \psi\left( x-\frac{1}{2}\delta x \right) -\delta x  \right)
\end{align*}
with $V(\delta,0)=\sum_{\gamma} |w(\rho)|^2,$
i.e., for any fixed real numbers $\alpha<\beta,$ we have
\begin{align*}
\lim_{U \to \infty}\frac{1}{U}\mathop{\mathrm{meas}}\left\{ x \in [U,2U] \,: \, \widetilde{E}\left( e^u;\delta,0 \right) \in (\alpha,\beta] \right\}
=\frac{1}{\sqrt{2\pi}}\int_{\alpha}^{\beta}e^{-\frac{t^2}{2}}dt.
\end{align*}
\end{theorem}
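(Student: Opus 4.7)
The plan is to deploy the method of moments. Since $\mathcal{N}(0,1)$ is determined by its moments, it suffices to show that for every fixed integer $k \geq 1$,
\begin{align*}
M_k(U) := \frac{1}{U}\int_U^{2U} \widetilde{E}(e^u;\delta,0)^k\,du \longrightarrow \begin{cases} (k-1)!! & k \text{ even,}\\ 0 & k \text{ odd,}\end{cases}
\end{align*}
as $U \to \infty$ with $\delta = \delta(U)$ as in the hypothesis. The first step invokes Lemma \ref{lem:explicit} with $Z_1 = e^{3U}$ to write $\sqrt{V(\delta,0)}\,\widetilde{E}(e^u;\delta,0) = X_{Z_1}(u) + O(U^2 e^{-2U})$, where $X_{Z_1}(u) := -\sum_{|\gamma|\leq Z_1} w(\rho)e^{i\gamma u}$. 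I further decompose $X_{Z_1} = X_Z + \epsilon_Z$ at a cutoff $Z = \omega_k(\delta)/\delta$ with $\omega_k \to \infty$ slowly (to be chosen). The tail piece $\epsilon_Z$ has mean square $V' := \sum_{Z < |\gamma| \leq Z_1}|w(\rho)|^2 \ll V/\omega_k$ by the estimate $|w(\rho)| \ll 1/|\gamma|$; once the moment identity below is established, a generalised Hölder argument bounds the cross-term integrals $\frac{1}{U}\int X_Z^{k-j}\epsilon_Z^j\,du \ll V^{k/2}/\omega_k^{j/2}$ for $j \geq 1$, which is negligible.

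The heart of the proof is the identity
\begin{align*}
\frac{(-1)^k}{U}\int_U^{2U} X_Z(u)^k\,du = \sum_{|\gamma_1|,\ldots,|\gamma_k|\leq Z}\prod_{j=1}^k w(\rho_j)\cdot I_U\Bigl(\sum_{j=1}^k \gamma_j\Bigr),
\end{align*}
where $I_U(\alpha) := \frac{1}{U}\int_U^{2U} e^{iu\alpha}\,du$ satisfies $I_U(0) = 1$ and $|I_U(\alpha)| \leq \min\{1,\, 2/(U|\alpha|)\}$. Under LI, the diagonal condition $\sum_j \gamma_j = 0$ forces $k$ to be even and the tuple $(\gamma_1,\ldots,\gamma_k)$ to decompose into $k/2$ pairs of the form $\{+\gamma,-\gamma\}$. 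Summing over the $(k-1)!!$ perfect matchings and using $V(Z) := \sum_{|\gamma| \leq Z}|w(\rho)|^2 \sim V(\delta,0)$ (valid once $\delta Z \to \infty$, by the calculation that underlies Proposition \ref{thm:cov}), the diagonal contributes $(k-1)!!\,V^{k/2}(1 + o(1))$. For odd $k$, the diagonal is empty under LI.

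The off-diagonal contribution is analysed by splitting on the size of $|\sum_j \gamma_j|$. For well-spaced tuples with $|\sum_j \gamma_j| > Z^{-c_k}$, the bound $|I_U| \leq 2Z^{c_k}/U$ combined with $\sum_{|\gamma| \leq Z}|w(\rho)| \ll \log(1/\delta)\log\omega_k$ (via (\ref{eq:crude})) yields a WS contribution $\ll Z^{c_k}(\log U)^{2k}/U$; the hypothesis $\log(1/\delta) = o(\log U)$ forces $Z = U^{o(1)}$, hence $Z^{c_k}/U = U^{-1+o(1)}$ and the WS bound is $o(V^{k/2})$. For poorly-spaced tuples with $0 < |\sum_j \gamma_j| \leq Z^{-c_k}$, QLI applied to each sign pattern $\boldsymbol{\epsilon} \in \{\pm 1\}^k$ produces a rate $g_k(Z) \to \infty$ with $|\mathrm{PS}| \ll N(Z)^{k/2}/g_k(Z)$; estimating the weights by $\prod_j |w(\rho_j)| \leq (10\delta)^k$ via (\ref{eq:crude}) gives a PS contribution of order $\delta^k (Z\log Z)^{k/2}/g_k(Z) \asymp V^{k/2}\omega_k^{k/2}/g_k(\omega_k/\delta)$, which is $o(V^{k/2})$ once $\omega_k$ is chosen to grow slowly enough that $\omega_k^{k/2} = o(g_k(\omega_k/\delta))$. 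Combining the diagonal, WS and PS estimates gives $M_k(U) \to (k-1)!!$ when $k$ is even and $M_k(U) \to 0$ when $k$ is odd. The principal difficulty lies in the PS bound: QLI provides only a counting statement, so we must work with the trivial pointwise bound $|w(\rho)| \ll \delta$, which in turn forces $Z$ only marginally larger than $1/\delta$. It is precisely the slack $\log(1/\delta) = o(\log U)$ that simultaneously accommodates the three constraints $V(Z) \sim V$, $Z^{c_k} = o(U)$, and $\omega_k^{k/2} = o(g_k(\omega_k/\delta))$ for each fixed $k$; a weighted QLI would presumably relax the third and extend the theorem toward $\log(1/\delta) \asymp \log U$.
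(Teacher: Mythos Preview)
Your moment computation for the truncated sum $X_Z$ (diagonal/WS/PS) is correct and matches the paper's Proposition \ref{prop:moments}. The gap is in the passage from moments of $X_Z$ to moments of the full $\widetilde{E}$.

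Your ``generalised H\"older argument'' for the cross-terms $\frac{1}{U}\int X_Z^{k-j}\epsilon_Z^j\,du$ with $j\geq 2$ requires moment bounds on $\epsilon_Z$ of order $>2$. But $\epsilon_Z$ involves zeros in $(Z,Z_1]$ with $Z_1=e^{3U}$, and none of your tools reach this range: the QLI/PS count is only useful when combined with the pointwise bound $|w(\rho)|\ll\delta$, which gives a PS contribution for $\epsilon_Z$-moments of order $\delta^{2m}N(Z_1)^m/g(Z_1)$, astronomically large; and if you expand the cross-term directly and run diagonal/WS/PS on the whole tuple, the WS part now carries a factor $\bigl(\sum_{Z<|\gamma|\leq Z_1}|w(\rho)|\bigr)^j\asymp U^{2j}$, which swamps $V^{k/2}=U^{-o(1)}$. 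Even your $j=1$ step is looser than claimed: with the sharp cutoff $\mathbf{1}_{[U,2U]}$ one has $\frac{1}{U}\int|\epsilon_Z|^2\ll (\log^2 Z)/Z$, not $V'$, because the off-diagonal in the $L^2$ expansion dominates the diagonal by a factor $\log Z$.

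The paper avoids this by never trying to compute moments of the full $\widetilde{E}$. It applies the method of moments only to the truncated $\widetilde{E}^{(T)}$, deduces convergence in distribution of $\widetilde{E}^{(T)}$, and then uses an $L^2$ bound plus Chebyshev (i.e.\ Slutsky) to pass to $\widetilde{E}$. Crucially, the moments and the $L^2$ tail bound are taken against a \emph{smooth} weight $W$: the rapid decay of $\widehat{W}$ buys an extra factor $1/U$ in Lemma \ref{lem:chebyshev2}, giving $\mathbb{E}_{u\sim U}^W|\epsilon^{(T)}|^2\ll (\log^2 T)/(TU)$ rather than $(\log^2 T)/T$. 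This is not cosmetic. The Chebyshev step with the sharp weight would require $\delta T\gg \log(1/\delta)$, whereas the PS constraint forces $\delta T\leq g(T)^{1/k}$ for every $k$; when $g$ grows slowly these are incompatible. With the smooth weight one only needs $\delta T\cdot U\gg\log(1/\delta)$, which is automatic under $\log(1/\delta)=o(\log U)$. The indicator $\mathbf{1}_{[1,2]}$ is recovered at the very end by an $\epsilon$-approximation.
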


\begin{remark}
By Proposition \ref{thm:cov}, in the above theorem $\widetilde{E}\left( x;\delta,0 \right)$
can be replaced by 
\begin{align*}
\frac{1}{\sqrt{\left( \delta \log \frac{1}{\delta}+(1-\gamma-2\pi)\delta \right)x}}
\left( \psi\left( x+\frac{1}{2}\delta x \right) - \psi\left( x-\frac{1}{2}\delta x \right) -\delta x \right).
\end{align*}
\end{remark}

Prior to the proof of the theorem, we compute moments of the finite approximation
\begin{align*}
\widetilde{E}^{(T)}\left( e^u;\delta,0 \right):=\frac{E^{(T)}\left( e^u;\delta,0 \right)}{\sqrt{V^{(T)}(\delta,0)}},
\end{align*}
where we recall that
\begin{align*}
E^{(T)}\left(e^u;\delta,0\right)=&-\sum_{|\gamma|\leq T}w(\rho)e^{i\gamma u}
\end{align*}
and
\begin{align*}
V^{(T)}(\delta,0):=&\sum_{|\gamma| \leq T}\left|w(\rho) \right|^2
\end{align*}
for some $T \geq 2$ such that $\delta T \to \infty$ as $U \to \infty.$

\begin{lemma} \label{lem:VT}
Let $\delta>0, T \geq 2.$ Then 
\begin{align*}
V^{(T)}(\delta,0)=V(\delta,0)+O\left( \frac{\log T}{T} \right).
\end{align*}

\begin{proof}
Since $w(\rho) \ll |\gamma|^{-1},$ we have
\begin{align*}
\sum_{|\gamma|>T} |w(\rho)|^2 &\ll 
\sum_{|\gamma|>T} \frac{1}{\gamma^2}\\
&\ll \int_{T}^{\infty} \frac{\log t}{t^2} dt \\
&\ll \frac{\log T}{T},
\end{align*}
and hence the lemma follows.
\end{proof}

\end{lemma}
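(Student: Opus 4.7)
The plan is simply to estimate the tail $V(\delta,0) - V^{(T)}(\delta,0) = \sum_{|\gamma|>T}|w(\rho)|^2$ and show it is $O(\log T/T)$. The key input is a pointwise bound on $|w(\rho)|$ for zeros high up, which I will extract from the already-used crude estimate (\ref{eq:crude}). Namely, under RH, $\rho = \tfrac{1}{2}+i\gamma$, so
\begin{equation*}
|w(\rho)| \;\leq\; \frac{1}{|\rho|}\Bigl|\bigl(1+\tfrac{1}{2}\delta\bigr)^{\rho}-\bigl(1-\tfrac{1}{2}\delta\bigr)^{\rho}\Bigr| \;\ll\; \frac{1}{|\gamma|}
\end{equation*}
uniformly for $|\gamma|\geq 1$, since each factor $(1\pm\tfrac{1}{2}\delta)^{\rho}$ has modulus $(1\pm\tfrac{1}{2}\delta)^{1/2}=O(1)$. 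In fact, this is precisely the case $t=0$ of the estimate (\ref{eq:crude}) used earlier in the paper.

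Given this, the tail is bounded by
\begin{equation*}
\sum_{|\gamma|>T}|w(\rho)|^2 \;\ll\; \sum_{|\gamma|>T}\frac{1}{\gamma^2}.
\end{equation*}
The second step is to convert this sum over zeros to an integral using the Riemann--von Mangoldt counting formula $N(t)=\tfrac{t}{2\pi}\log\tfrac{t}{2\pi}-\tfrac{t}{2\pi}+O(\log(t+2))$ (which was already invoked in the proof of Proposition \ref{thm:cov}). A Riemann--Stieltjes integration by parts, or equivalently writing $\sum_{\gamma>T}\gamma^{-2} = \int_T^\infty t^{-2}\,dN(t)$, yields
\begin{equation*}
\sum_{|\gamma|>T}\frac{1}{\gamma^2} \;\ll\; \int_{T}^{\infty}\frac{\log t}{t^2}\,dt \;\ll\; \frac{\log T}{T},
\end{equation*}
which is the claimed bound.

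There is no real obstacle here; the only care needed is to confirm that the constant in $|w(\rho)|\ll 1/|\gamma|$ is genuinely absolute (independent of $\delta$), which is immediate from the mean value theorem applied to $z\mapsto z^{\rho}$ on $[1-\tfrac{1}{2}\delta,1+\tfrac{1}{2}\delta]$ or equally from the explicit formula defining $w$. Combining the two displays gives $V^{(T)}(\delta,0)=V(\delta,0)+O(\log T/T)$.
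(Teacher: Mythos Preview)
Your proof is correct and follows essentially the same approach as the paper: bound $|w(\rho)|\ll|\gamma|^{-1}$, sum $\gamma^{-2}$ over $|\gamma|>T$, and estimate via the zero-counting function to get $\ll\int_T^\infty t^{-2}\log t\,dt\ll(\log T)/T$. You simply spell out a few justifications (uniformity in $\delta$, the Riemann--Stieltjes step) that the paper leaves implicit.
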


Instead of the Lebesgue measure, the moments are computed with respect to a nonnegative Schwartz function $W$ supported on $\left(1/2, 5/2\right)$ of unit mass (see \cite{MR2039790} for instance). In particular, both $W$ and its Fourier transform $\widehat{W}$ are rapidly decaying in the sense that for any $A>0,$ we have
\begin{align} \label{eq:growthcond}
W(u), \, \widehat{W}(u) \ll_A (1+|u|)^{-A},
\end{align}
where the Fourier transform $\widehat{W}$ is defined as
\begin{align*}
\widehat{W}(v):=\int_{-\infty}^{\infty}W(u)e^{-iuv}du.
\end{align*}
For convenience, we write
\begin{align*}
\mathbb{E}_{u \sim U}^W\left( f(u) \right):= \frac{1}{U}\int_{-\infty}^{\infty} f(u)W\left( \frac{u}{U} \right)
du.
\end{align*}

\begin{proposition} \label{prop:moments}
Let $k \geq 1$ be an integer and $\delta \to 0^{+}$ as $U \to \infty.$ If $T \geq 2$ satisfies $\delta T \to \infty$ but $\delta T \leq g(T)^{1/k}$ as $U \to \infty,$ where $g(T)$ is defined in (\ref{eq:g(T)}), and $T \leq U^{(1-\varepsilon)/c_k}$ for some $\varepsilon>0,$ then
\begin{align*}
\mathbb{E}_{u \sim U}^W ( {\widetilde{E}^{(T)}\left( e^u;\delta,0 \right)}^{k} )
=\mu_k+o_k(1),
\end{align*}
where
\begin{align*}
\mu_{k}:=
\begin{cases}
\frac{k!}{2^{k/2}(k/2)!} & \mbox{{\normalfont if $k$ is even,} }\\
\hfil 0 & \mbox{{\normalfont otherwise. }} 
\end{cases}
\end{align*}
\end{proposition}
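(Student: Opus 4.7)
The plan is to compute the $k$-th $W$-weighted moment of $E^{(T)}(e^u;\delta,0)$ by expanding the $k$-th power into a multi-sum over zeros and evaluating the resulting exponential averages against $\widehat{W}$. Setting $\epsilon_j:=\sgn\gamma_j \in \{\pm 1\}$, the expansion yields
\begin{align*}
\mathbb{E}_{u \sim U}^W\!\bigl(E^{(T)}(e^u;\delta,0)^k\bigr) = (-1)^k \sum_{0 < |\gamma_1|, \ldots, |\gamma_k| \leq T} \Bigl(\prod_{j=1}^k w(\rho_j)\Bigr)\, \widehat{W}\bigl(-U\langle \boldsymbol{\epsilon}, |\boldsymbol{\gamma}|\rangle\bigr),
\end{align*}
since the change of variables $v=u/U$ turns $\frac{1}{U}\int e^{i\omega u} W(u/U)\,du$ into $\widehat{W}(-U\omega)$. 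This $k$-fold sum splits into a diagonal part where $\langle \boldsymbol{\epsilon}, |\boldsymbol{\gamma}| \rangle = 0$ (contributing $\widehat{W}(0)=1$) and an off-diagonal remainder.

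For the diagonal part, LI forces $\langle \boldsymbol{\epsilon}, |\boldsymbol{\gamma}| \rangle = 0$ to be equivalent to the coincidence of the multisets $\{|\gamma_j|:\epsilon_j=+1\}$ and $\{|\gamma_j|:\epsilon_j=-1\}$, which is impossible when $k$ is odd and hence gives $\mu_k=0$. For even $k$, the maximal-freedom case is a perfect matching of $[k]$ into $k/2$ pairs $\{j,j'\}$ with $\epsilon_{j'}=-\epsilon_j$, $|\gamma_{j'}|=|\gamma_j|$, and all magnitudes distinct across pairs. Using $w(\overline{\rho})=\overline{w(\rho)}$ each pair contributes $|w(\rho)|^2$; counting $(k-1)!!$ matchings and $2^{k/2}$ sign orientations yields the main term $\mu_k(V^{(T)})^{k/2}$. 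Degenerate diagonal configurations (some magnitudes coinciding across pairs) are lower order: using the bound $\sum_{0<\gamma\leq T}|w(\rho)|^q \ll_q \delta^{q-1}\log(1/\delta)$ for $q\geq 4$ that follows from the proof of Proposition \ref{thm:cov}, they contribute $O_k(\delta/\log(1/\delta))\cdot (V^{(T)})^{k/2}$.

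For the off-diagonal part, I split according to the size of $|\langle \boldsymbol{\epsilon}, |\boldsymbol{\gamma}| \rangle|$. When $|\langle \boldsymbol{\epsilon}, |\boldsymbol{\gamma}| \rangle| > T^{-c_k}$, the Schwartz decay (\ref{eq:growthcond}) combined with $T \leq U^{(1-\epsilon)/c_k}$ yields $|\widehat{W}(-U\langle \boldsymbol{\epsilon},|\boldsymbol{\gamma}|\rangle)| \ll_A U^{-A\epsilon}$, which crushes any polynomial-in-$T$ total. When $0 < |\langle \boldsymbol{\epsilon}, |\boldsymbol{\gamma}| \rangle| \leq T^{-c_k}$, QLI provides the count bound $\ll_k N(T)^{k/2}/g(T)$, and together with the uniform estimate $|w(\rho_j)|\ll \delta$ this gives a contribution $\ll \delta^k N(T)^{k/2}/g(T)$, whose ratio to $(V^{(T)})^{k/2}\asymp (\delta\log(1/\delta))^{k/2}$ is $\ll (\delta N(T))^{k/2}/(g(T)(\log 1/\delta)^{k/2})$. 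Since $\delta T \leq g(T)^{1/k} \leq (\log T)^{1/k}$ forces $\log T \ll \log(1/\delta)$, we obtain $\delta N(T) \ll g(T)^{1/k}\log(1/\delta)$ and hence the ratio is $\ll g(T)^{-1/2} = o_k(1)$.

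Dividing through by $(V^{(T)})^{k/2}$ delivers $\mu_k + o_k(1)$, as claimed. The main obstacle is the near-resonant off-diagonal regime $0<|\langle \boldsymbol{\epsilon}, |\boldsymbol{\gamma}| \rangle|\leq T^{-c_k}$, since QLI controls only a count and not a weighted sum; the assumption $\delta T\leq g(T)^{1/k}$ is precisely tuned so that this count, multiplied by the crude pointwise bound $\delta^k$ on $\prod|w(\rho_j)|$, remains asymptotically smaller than the Gaussian normalization $(V^{(T)})^{k/2}$.
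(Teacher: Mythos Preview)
Your proof is correct and follows essentially the same route as the paper: expand the $k$-th power, rewrite the $u$-average as $\widehat W(-U\langle\boldsymbol\epsilon,\boldsymbol\gamma\rangle)$, split into the diagonal ($\langle\boldsymbol\epsilon,\boldsymbol\gamma\rangle=0$), near-resonant ($0<|\langle\boldsymbol\epsilon,\boldsymbol\gamma\rangle|\le T^{-c_k}$) and far ($>T^{-c_k}$) regimes, and handle these respectively by LI combinatorics, the QLI count bound, and Schwartz decay of $\widehat W$. The only cosmetic difference is that you treat the degenerate diagonals via the moment bound $\sum_{0<\gamma\le T}|w(\rho)|^q\ll_q\delta^{q-1}\log(1/\delta)$, whereas the paper simply pulls two of the coinciding factors out of the sum and bounds them pointwise by $\delta^2$; both give the same $O_k(\delta/\log(1/\delta))\cdot (V^{(T)})^{k/2}$ saving.
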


\begin{remark}
Without assuming LI but only RH, La Bretèche and Fiorilli \cite{MR4322621} were able to establish lower bounds for all even moments of suitably weighted (decaying exponentially with nonnegative Fourier transform) prime count in a short moving interval, provided that $\delta \in (0,1/2).$
\end{remark}

\begin{proof}
Fix $\varepsilon>0.$ Let $T \geq 2$ satisfying $\delta T \to \infty$ and $T \leq U^{(1-\varepsilon)/c_k}.$
By definition, we have
\begin{align*}
\mathbb{E}_{u \sim U}^W( {{E}^{(T)}\left( e^u;\delta,0 \right)}^{k} ) 
=&\frac{1}{U} \int_{-\infty}^{\infty} 
\left( -\sum_{0 < \gamma \leq T} ( w(\rho)e^{i\gamma u}+\overline{w(\rho)}e^{-i\gamma u} ) \right)^{k}W\left( \frac{u}{U} \right) du\\
=&(-1)^k\sum_{\boldsymbol{\varepsilon} \in \{ \pm 1\}^k }\sum_{\boldsymbol{\gamma}\in [0,T]^k} w^{\boldsymbol{\varepsilon}}(\boldsymbol{\rho}) \cdot
\widehat{W} \left( -U \langle \boldsymbol{\varepsilon}, \boldsymbol{\gamma} \rangle \right),
\end{align*}
where 
\begin{align*}
w^{\boldsymbol{\varepsilon}}(\boldsymbol{\rho}):=\prod_{\varepsilon_i=1}w(\rho_i)\prod_{\varepsilon_j=-1}\overline{w(\rho_j)}.
\end{align*}
We split the sum into $\Sigma_1, \Sigma_{2}$ and $\Sigma_{3}$ consisting of tuples $(\boldsymbol{\varepsilon}, \boldsymbol{\gamma})$ of which 
$\langle \boldsymbol{\varepsilon}, \boldsymbol{\gamma} \rangle=0,  
0<\left|\langle \boldsymbol{\varepsilon}, \boldsymbol{\gamma} \rangle \right| \leq T^{-c_k}$
and $\left|\langle \boldsymbol{\varepsilon}, \boldsymbol{\gamma} \rangle \right|>T^{-c_k}$
respectively, where $c_k$ is defined in Conjecture \ref{conj: qli}.

Let us deal with the sum $\Sigma_3$ first. Using (\ref{eq:growthcond}), we have
\begin{align*}
\widehat{W} \left( -U \langle \boldsymbol{\varepsilon}, \boldsymbol{\gamma} \rangle \right) 
&\ll_A \left( 1+U\left| \langle \boldsymbol{\varepsilon}, \boldsymbol{\gamma} \rangle \right| \right)^{-A} \\
&\ll_A  \left( T^{-c_k}U \right)^{-A},
\end{align*}
so that the sum $\Sigma_3$ is
\begin{align*}
\ll_A \left( T^{-c_k}U \right)^{-A}\underset{\left| \langle \boldsymbol{\varepsilon}, \boldsymbol{\gamma} \rangle \right|>T^{-c_k}}{\sum\sum} \left| w^{\boldsymbol{\varepsilon}}(\boldsymbol{\rho}) \right|.
\end{align*}
Since
$w(\rho) \ll \delta,$ this is
\begin{align*}
\ll_{A,k} \left( T^{-c_k}U \right)^{-A}\delta^k\sum_{\boldsymbol{\varepsilon} \in \{ \pm 1\}^k }\sum_{\boldsymbol{\gamma}\in [0,T]^k} 1
 &\ll_{A,k}  \left( T^{-c_k}U \right)^{-A} \delta^k  N(T)^k\\
&\ll_{A,k, \varepsilon}  U^{\frac{k}{c_k}-\varepsilon A} \delta^k 
\end{align*}
by the condition that $T \leq U^{(1-\varepsilon)/c_k}.$ Since it is assumed that $c_k>k,$ by taking $A=100/\varepsilon,$ this is $\ll_{k,\varepsilon} U^{-99}\delta^k.$ As the conditions imply that $1/\delta \leq T \leq U^{1/c_k} \leq U^{1/k},$ we have
\begin{align} \label{eq:sum3}
\Sigma_3 \ll_k \delta^{100k}.
\end{align}

On the other hand, assuming Conjecture \ref{conj: qli} in the form of (\ref{eq:g(T)}), the sum $\Sigma_2$ is
\begin{align*}
\ll_k \delta^k
\underset{0<\left|\langle \boldsymbol{\varepsilon}, \boldsymbol{\gamma} \rangle \right| \leq T^{-c_k}}{\sum \sum} 1
\ll_k \delta^k N(T)^{k/2} /g(T).
\end{align*} 
Since $T \leq g(T)^{1/k} \delta^{-1},$ this is
\begin{align} \label{eq:sum2}
\ll_k \left( \delta \log \frac{1}{\delta} \right)^{k/2}g(T)^{-\frac{1}{2}}
=o_k \left( \left( \delta \log \frac{1}{\delta} \right)^{k/2}\right).
\end{align}

We are left with the sum $\Sigma_1.$ As discussed above, assuming LI, the alternating sum  $\langle \boldsymbol{\varepsilon}, \boldsymbol{\gamma} \rangle \Mod{1}$
vanishes if and only if $k$ is even and $[k]$ can be partitioned into $k/2$ pairs $\{j,j'\} $
of which $\varepsilon_{j'}=-\varepsilon_{j}$ and $\gamma_{j'}=\gamma_j.$ If $k$ is odd, then the sum $\Sigma_1$ is empty. Otherwise, since $W$ is of unit mass, i.e., $\widehat{W}(0)=1,$ we have
\begin{align*}
\Sigma_1=\underset{\substack{\langle \boldsymbol{\varepsilon}, \boldsymbol{\gamma} \rangle=0\\|\{ \gamma_1,\ldots,\gamma_k \}|=k/2}}{\sum \sum}
w^{\boldsymbol{\varepsilon}}(\boldsymbol{\rho})+\underset{\substack{\langle \boldsymbol{\varepsilon}, \boldsymbol{\gamma} \rangle=0\\|\{ \gamma_1,\ldots,\gamma_k \}|<k/2}}{\sum \sum}w^{\boldsymbol{\varepsilon}}(\boldsymbol{\rho})
=:\Sigma_{1,1}+\Sigma_{1,2}.
\end{align*}
Let us deal with the sum $\Sigma_{1,2}$ first.  In this case, note that there are at least four repeated $\gamma$'s, say for instance $\gamma_1=\gamma_2=\gamma_3=\gamma_4.$ Since $w(\rho)\ll \delta,$ pulling $w(\gamma_1)\overline{w(\gamma_2)}$ out of the sum gives
\begin{align} \label{eq:sum12}
\Sigma_{1,2} &\ll_k \delta^2 \left( \sum_{0<\gamma \leq T} |w(\rho)|^2 \right)^{\frac{k}{2}-1}
\nonumber \\
&\ll_k \frac{\delta}{\log \frac{1}{\delta}} \cdot \left(\delta \log \frac{1}{\delta} \right)^{\frac{k}{2}}.
\end{align}
Meanwhile, since there are $2^{k/2}$ ways of choosing $\boldsymbol{\varepsilon}$ and $\mu_k$ ways of partitioning $k$ distinct $\gamma's$ into $k/2$ unordered pairs, the sum $\Sigma_{1,1}$ is
\begin{align*}
2^{k/2}\mu_k \underset{\substack{0<\gamma_1,\ldots, \gamma_{k/2} \leq T\\ \text{distinct}}}{\sum \cdots \sum}
 \prod_{j=1}^{k/2} |w(\rho_j)|^2=
 \mu_k \underset{\substack{|\gamma_1|,\ldots, |\gamma_{k/2}| \leq T\\ \text{distinct}}}{\sum \cdots \sum}
 \prod_{j=1}^{k/2} |w(\rho_j)|^2.
\end{align*}
Similar to (\ref{eq:sum12}), we have
\begin{align*}
\underset{\substack{|\gamma_1|,\ldots, |\gamma_{k/2}| \leq T\\ \text{distinct}}}{\sum \cdots \sum}
 \prod_{j=1}^{k/2} |w(\rho_j)|^2
=\left( \sum_{|\gamma| \leq T} |w(\rho)|^2   \right)^{k/2}
+O_k \left( \frac{\delta}{\log \frac{1}{\delta}} \cdot \left(\delta \log \frac{1}{\delta} \right)^{k/2} \right),
\end{align*}
and hence by definition
\begin{align} \label{eq:sum11}
\Sigma_{1,1}=\mu_k \cdot ( V^{(T)}(\delta,0))^{k/2}+O_k 
 \left( \frac{\delta}{\log \frac{1}{\delta}} \cdot \left(\delta \log \frac{1}{\delta} \right)^{k/2} \right).
\end{align}
As $\mu_k=0$ when $k$ is odd, this holds for any integer $k \geq 1.$ Since 
\begin{align*}
V^{(T)}(\delta;0)=\delta \log \frac{1}{\delta} +O\left(\delta+ \frac{\log T}{T} \right)
\end{align*}
by Proposition \ref{thm:cov} and Lemma \ref{lem:VT}, the proposition follows from
combining (\ref{eq:sum3}), (\ref{eq:sum2}), (\ref{eq:sum12}) and (\ref{eq:sum11}).
\end{proof}

\begin{lemma} \label{lem:chebyshev2}
Let $T, U \geq 2$ with $T \leq e^U.$ Suppose $\delta \to 0^{+}$ as $T \to \infty.$ Then 
\begin{align*}
\mathbb{E}_{u \sim U}^W
( ( E\left( e^u;\delta,0 \right)-E^{(T)}\left( e^u;\delta,0 \right))^2 )\ll \frac{\log^2 T}{TU}+U^3e^{-U/2}.
\end{align*}

\begin{proof}
This is similar to Lemma \ref{lem:chebyshev}, except we are integrating with respect to a smooth weight $W$ for extra savings. Let $u \in [U,2U].$  Invoking
Lemma \ref{lem:explicit} gives
\begin{align*}
E\left( e^u;\delta,0 \right)-E^{(T)}\left( e^u;\delta,0 \right)
=-\sum_{T<|\gamma| \leq e^U}w(\rho)e^{i\gamma u}
+O ( U^2 e^{-u/2} ).
\end{align*}
On one hand, since $W$ is supported on $(1/2,5/2),$ we have
\begin{align*}
\frac{1}{U} \int_{-\infty}^{\infty} 
\left| U^2 e^{-u/2} \right|^2 W\left( \frac{u}{U} \right) du \ll U^3e^{-U/2}.
\end{align*}
On the other, opening the square gives
\begin{align*}
\frac{1}{U} \int_{-\infty}^{\infty} \left| -\sum_{T<|\gamma|\leq e^U} w(\rho)e^{i\gamma u} \right|^2 W\left( \frac{u}{U} \right) du
=\sum_{T<|\gamma_1|, |\gamma_2| \leq e^U} w(\rho_1)\overline{w(\rho_2)} \cdot
\widehat{W}\left( -U(\gamma_1-\gamma_2) \right).
\end{align*}
Since $w(\rho) \ll |\gamma|^{-1}$ and $\widehat{W}(x) \ll (1+|x|)^{-2},$ this is
\begin{align*}
 \ll \sum_{\substack{|\gamma_1|, |\gamma_2|>T\\|\gamma_1-\gamma_2| \leq U^{-1}}} \frac{1}{|\gamma_1\gamma_2|} 
+ \frac{1}{U^2}\sum_{\substack{|\gamma_1|, |\gamma_2| > T\\|\gamma_1-\gamma_2| > U^{-1}}} \frac{1}{|\gamma_1\gamma_2||\gamma_1-\gamma_2|^2}.
\end{align*}
By partial summation, the first sum is
\begin{align} \label{eq:finalsum}
\ll \underset{\substack{x,y > T\\|x-y|\leq U^{-1}}}{\iint} \frac{\log x \log y}{xy} \cdot dxdy 
\ll& \int_{x>T} \frac{\log^2 x}{x^2} \left(\int_{\substack{y>T\\ |x-y| \leq U^{-1}}} dy \right) dx \nonumber\\
\ll& \frac{\log^2 T}{TU},
\end{align}
and the second sum is
\begin{align*}
\ll \frac{1}{U^2}\underset{\substack{x,y > T\\|x-y| > U^{-1}}}{\iint}
\frac{\log x \log y}{xy} \cdot \frac{ dxdy }{(x-y)^2}.
\end{align*}
We split the integral into
\begin{gather}
I_1:= 
\frac{1}{U^2}  \underset{\substack{x,y > T\\U^{-1}<|x-y| \leq U^{-1}(\log T/\log^2 T)}}{\iint} 
\frac{\log x \log y}{xy} \cdot \frac{ dxdy }{(x-y)^2} \nonumber \\
\leq \frac{1}{U^2} \sum_{1 \leq j \leq T/\log^2 T}  
 \underset{\substack{x,y > T\\jU^{-1}<|x-y| \leq (j+1)U^{-1}}}{\iint} 
\frac{\log x \log y}{xy} \cdot \frac{ dxdy }{(x-y)^2} \nonumber \\
\ll \frac{\log^2 T}{TU} \label{eq:finali1}
\end{gather}
and
\begin{align}
I_2:=\frac{1}{U^2}  \underset{\substack{x,y > T\\|x-y|>U^{-1}(T/\log^2 T)}}{\iint} 
\frac{\log x \log y}{xy} \cdot \frac{ dxdy }{(x-y)^2} \ll \frac{\log^2 T}{TU} \label{eq:finali2}.
\end{align}
Combining (\ref{eq:finalsum}), (\ref{eq:finali1}) and (\ref{eq:finali2}), the lemma follows.
\end{proof}
\end{lemma}

\begin{proof}[Proof of Theorem \ref{thm:short}]
Given a Borel subset $B \subseteq \mathbb{R},$ we denote
\begin{align*}
\mathbb{P}_{u \sim U}^W \left( B \right):=\frac{1}{U}\int_B W\left( \frac{u}{U} \right) du.
\end{align*}
Let $T \geq 2$ be a real number satisfying $\delta T \to \infty$ but $\log 
 \left( \delta T \right)=o \left( \log g(T) \right),$ and $\log T =o \left( \log U\right)$ as $U \to \infty,$ so that Proposition \ref{prop:moments} is applicable for any integer $k \geq 1.$
Applying the method of moments (see \cite[Theorem 30.2]{MR1324786}), the proposition implies that
\begin{align*}
\mathbb{P}_{u \sim U}^W ( \widetilde{E}^{(T)}\left( e^u;\delta,0 \right) \in (\alpha, \beta] )=\frac{1}{\sqrt{2\pi}}\int_{\alpha}^{\beta}e^{-\frac{t^2}{2}}dt+o(1)
\end{align*}
for any fixed real numbers $\alpha<\beta.$ Applying Proposition \ref{thm:cov} and Lemma \ref{lem:VT}, we have
\begin{align*}
V^{(T)}(\delta,0)=(1+o(1))V(\delta,0), 
\end{align*}
so that
\begin{align*}
\mathbb{P}_{u \sim U}^W \left( \frac{E^{(T)}\left( e^u;\delta,0 \right)}{\sqrt{V(\delta,0)}} \in (\alpha, \beta] \right)=\frac{1}{\sqrt{2\pi}}\int_{\alpha}^{\beta}e^{-\frac{t^2}{2}}dt+o(1).
\end{align*}
Since 
\begin{align*}
\mathbb{E}_{u \sim U}^W
( ( E\left( e^u;\delta,0 \right)-E^{(T)}\left( e^u;\delta,0 \right))^2 ) =o\left( V(\delta,0) \right)
\end{align*}
by Proposition \ref{thm:cov} and Lemma \ref{lem:chebyshev2}, it follows from Chebyshev's inequality that
\begin{align*}
\mathbb{P}_{u \sim U}^W ( \widetilde{E}\left( e^u;\delta,0 \right) \in (\alpha, \beta] )=\frac{1}{\sqrt{2\pi}}\int_{\alpha}^{\beta}e^{-\frac{t^2}{2}}dt+o(1).
\end{align*}
Finally, given $\varepsilon \in (0,1),$ We choose $W$ such that $W \equiv 1$ on $(1+\varepsilon, 2-\varepsilon).$ Since $W$ is nonnegative and of unit mass, we have
\begin{gather*}
\frac{1}{U}\mathop{\mathrm{meas}}\left\{ x \in [U,2U] \,: \, \widetilde{E}\left( e^u;\delta,0 \right) \in (\alpha,\beta] \right\}-\mathbb{P}_{u \sim U}^W ( \widetilde{E}\left( e^u;\delta,0 \right) \in (\alpha, \beta] ) \\
=\frac{1}{U}\int_{-\infty}^{\infty} 1_{(\alpha,\beta]} 
( \widetilde{E}\left( e^u;\delta,0 \right) )
\left( 1_{[1,2]}\left( \frac{u}{U} \right)-W\left( \frac{u}{U} \right) \right) du \\
\leq  \int_{\mathbb{R} \setminus (1+\varepsilon, 2-\varepsilon)} 
\left(1_{[1,2]}(x)+W(x) \right) dx = 4\varepsilon.
\end{gather*}
Letting $\varepsilon \to 0^+,$ the proof is completed.
\end{proof}


Assuming QLI, it is plausible to compute the mixed moments of $\widetilde{\boldsymbol{E}}\left(e^u;\delta,\boldsymbol{t}\right)$ for $r \geq 2$ and establish a multidimensional analog of Theorem \ref{thm:short}. However, we refrain from pursuing this here, as our primary objective lies in examining the uniformity of $\delta$ in $U.$

\section{Open questions}

As discussed in the introduction, Montgomery and Soundararajan \cite{MR2104891} established a central limit theorem conditionally for primes in a very short moving interval, provided that $\frac{H}{\log N} \to \infty$ and $\frac{\log H}{\log N} \to 0$ as $N \to \infty.$ Our Theorem \ref{thm:short}, on the other hand, establishes a central limit theorem conditionally, provided that $h=h(x)=\delta x$ with $\delta=\delta(X)>(\log X)^{-\varepsilon}$ for any $\varepsilon>0$ in logarithmic scale, however. Given the state of affairs, one may ask how primes in a short moving interval behave in the intermediate range. In particular, how does the transition from natural to logarithmic density occur?

\section*{Acknowledgements}
The author is grateful to Andrew Granville and Youness Lamzouri for their advice and encouragement. He would also like to thank R\'{e}gis de la Bret\`eche for insightful discussions, Kannan Soundararajan for his valuable suggestions, and Cihan Sabuncu for carefully reading an earlier version of the manuscript. In particular, he is indebted to the anonymous referees for their thoughtful comments and corrections. 

The latter part of this work was supported by the Swedish Research Council under grant no. 2016-06596 while the author was in residence at Institut Mittag-Leffler in Djursholm, Sweden during the semester of Winter 2024.

\printbibliography


\end{document}